\documentclass[11pt,a4paper]{amsart} 
\pagestyle{plain}
\usepackage{amsthm,amsfonts,amsmath,calc,enumerate,hyperref,graphicx}
\usepackage[sort&compress,numbers]{natbib}
\usepackage[lmargin=35mm,rmargin=35mm,tmargin=30mm,bmargin=30mm,footskip=10mm]{geometry}

\theoremstyle{plain} 
\newtheorem{theorem}{Theorem}[section]
\newtheorem{lemma}[theorem]{Lemma}
\newtheorem{corollary}[theorem]{Corollary}
\newtheorem{proposition}[theorem]{Proposition}

\theoremstyle{definition}

\newcommand{\thmlabel}[1]{\label{thm:#1}}
\newcommand{\thmref}[1]{Theorem~\ref{thm:#1}}
\newcommand{\lemlabel}[1]{\label{lem:#1}}
\newcommand{\lemref}[1]{Lemma~\ref{lem:#1}}

\newcommand{\eqnlabel}[1]{\label{eqn:#1}}
\newcommand{\eqnref}[1]{\eqref{eqn:#1}}
\newcommand{\Eqnref}[1]{Equation~\eqref{eqn:#1}}
\newcommand{\threeeqnref}[3]{\eqref{eqn:#1}, \eqref{eqn:#2} and \eqref{eqn:#3}}
\newcommand{\figlabel}[1]{\label{fig:#1}}
\newcommand{\figref}[1]{Figure~\ref{fig:#1}}

\newcommand{\seclabel}[1]{\label{sec:#1}}
\newcommand{\secref}[1]{Section~\ref{sec:#1}}
\newcommand{\twosecref}[2]{Sections~\ref{sec:#1} and \ref{sec:#2}}
\newcommand{\corlabel}[1]{\label{cor:#1}}
\newcommand{\corref}[1]{Corollary~\ref{cor:#1}}
\newcommand{\proplabel}[1]{\label{prop:#1}}
\newcommand{\propref}[1]{Proposition~\ref{prop:#1}}

\newcommand{\ee}{\textsf{\textup{ee}}}

\newcommand{\Figure}[4][htb]{
  \begin{figure}[#1]
    \vspace*{1ex}
    \begin{center}#3\end{center} \vspace*{-1ex}
    \caption{\figlabel{#2}#4}
  \end{figure}}

\newcommand{\set}[1]{\ensuremath{\protect\{#1\}}}
\newcommand{\SET}[1]{\ensuremath{\protect\left\{#1\right\}}}

\newcommand{\MAXM}[1]{\ensuremath{\protect\max\SET{#1}}}
\newcommand{\minm}[1]{\ensuremath{\protect\min\set{#1}}}

\newcommand{\BRACKET}[1]{\ensuremath{\protect\left(#1\right)}}

\newcommand{\ceil}[1]{\ensuremath{\protect\lceil#1\rceil}}
\newcommand{\ang}[1]{\ensuremath{\protect\langle#1\rangle}}
\newcommand{\CEIL}[1]{\ensuremath{\protect\left\lceil#1\right\rceil}}
\newcommand{\FLOOR}[1]{\ensuremath{\protect\left\lfloor#1\right\rfloor}}
\newcommand{\floor}[1]{\ensuremath{\protect\lfloor#1\rfloor}}

\newcommand{\Oh}[1]{\ensuremath{\protect\mathcal{O}(#1)}}

\newcommand{\half}{\ensuremath{\protect\tfrac{1}{2}}}

\DeclareMathOperator{\outdeg}{outdeg}
\DeclareMathOperator{\indeg}{indeg}

\newcommand{\Ceil}[1]{\ensuremath{\big\lceil#1\big\rceil}}

\newcommand{\CeilFrac}[2]{\ensuremath{\big\lceil\frac{#1}{#2}\big\rceil}}
\newcommand{\CEILFRAC}[2]{\CEIL{\frac{#1}{#2}}}

\newcommand{\FloorFrac}[2]{\ensuremath{\big\lfloor\frac{#1}{#2}\big\rfloor}}
\newcommand{\FLOORFRAC}[2]{\FLOOR{\frac{#1}{#2}}}

\newcommand{\ceill}[2]{\ensuremath{\ceil{#1}_{#2}}}
\newcommand{\CEILL}[2]{\ensuremath{\CEIL{#1}_{#2}}}
\newcommand{\Ceill}[2]{\ensuremath{\big\lceil{#1}\big\rceil_{#2}}}

\newcommand{\mincover}[1][d]{\ensuremath{\textsf{\textup{mincover}}_{#1}}}
\newcommand{\mc}[1][d]{\ensuremath{\textsf{\textup{mincover}}_{#1}}}
\newcommand{\rmc}[1][d]{\ensuremath{\textsf{\textup{rmincover}}_{#1}}}
\newcommand{\minpart}[1][d]{\ensuremath{\textsf{\textup{minpart}}_{#1}}}
\newcommand{\cliques}[1][d]{\ensuremath{\textsf{\textup{numcliques}}_{#1}}}

\newcommand{\PP}{\ensuremath{\mathcal{P}}}
\newcommand{\F}{\ensuremath{\mathcal{F}}}
\newcommand{\D}{\ensuremath{\mathcal{D}}}
\newcommand{\C}{\ensuremath{\mathcal{C}}}

\newcommand{\X}{\ensuremath{\mathcal{X}}}
 
\newcommand{\Z}{\ensuremath{\mathbb{Z}}}

\newcommand{\RootedCompleteTree}[2]{\ensuremath{\protect\overrightarrow{\Gamma}_{\!\!#1,#2}}}
\newcommand{\CompleteTree}[2]{\ensuremath{\protect\Gamma_{\!#1,#2}}}

\renewcommand{\baselinestretch}{1.25}
\setlength{\footnotesep}{\baselinestretch\footnotesep}

\begin{document}

\title{Partitions and Coverings of Trees by Bounded-Degree Subtrees}

\author{David R. Wood} \thanks{Supported by a QEII Research Fellowship
  from the Australian Research Council; research initiated at
  Universitat Polit{\`e}cnica de Catalunya, where supported by the
  Marie Curie Fellowship MEIF-CT-2006-023865, and by the projects MEC
  MTM2006-01267 and DURSI 2005SGR00692.}  \address{\newline Department
  of Mathematics and Statistics\newline The University of
  Melbourne\newline Melbourne, Australia} \email{woodd@unimelb.edu.au}

\keywords{graph, tree, covering, pathwidth}

\begin{abstract}
  This paper addresses the following questions for a given tree $T$
  and integer $d\geq2$: (1) What is the minimum number of degree-$d$
  subtrees that partition $E(T)$? (2) What is the minimum number of
  degree-$d$ subtrees that cover $E(T)$?  We answer the first question
  by providing an explicit formula for the minimum number of subtrees,
  and we describe a linear time algorithm that finds the corresponding
  partition. For the second question, we present a polynomial time
  algorithm that computes a minimum covering. We then establish a
  tight bound on the number of subtrees in coverings of trees with
  given maximum degree and pathwidth. Our results show that pathwidth
  is the right parameter to consider when studying coverings of trees
  by degree-3 subtrees. We briefly consider coverings of general
  graphs by connected subgraphs of bounded degree.
\end{abstract}

\maketitle
\tableofcontents
\newpage

\section{Introduction}
\seclabel{Intro}

This paper addresses the following questions, which are motivated by a
recent approach to drawing trees\footnote{We consider graphs $G$ that
  are simple and finite. A graph with one vertex is
  \emph{trivial}. Let $G$ be an (undirected) graph. The \emph{degree}
  of a vertex $v$ of $G$, denoted by $\deg_G(v)$, is the number of
  edges of $G$ incident with $v$. The minimum and maximum degrees of
  $G$ are respectively denoted by $\delta(G)$ and $\Delta(G)$. We say
  $G$ is \emph{degree-$d$} if $\Delta(G)\leq d$. Now let $G$ be a
  directed graph. Let $v$ be a vertex of $G$. The \emph{indegree} of
  $v$, denoted by $\indeg_G(v)$, is the number of incoming edges
  incident to $v$. The \emph{outdegree} of $v$, denoted by
  $\outdeg_G(v)$, is the number of outgoing edges incident to $v$. We
  say $G$ is \emph{outdegree-$d$} if $\outdeg_G(v)\leq d$ for every
  vertex $v$ of $G$.  A \emph{rooted tree} is a directed tree such
  that exactly one vertex, called the \emph{root}, has indegree
  $0$. It follows that every vertex except $r$ has indegree $1$, and
  every edge $vw$ of $T$ is oriented `away' from $r$; that is, if $v$
  is closer to $r$ than $w$, then $vw$ is directed from $v$ to $w$. If
  $r$ is a vertex of a tree $T$, then the pair $(T,r)$ denotes the
  rooted tree obtained by orienting every edge of $T$ away from $r$.}
developed in the companion paper \citep{HLW}.  For a given tree $T$
and integer $d\geq2$,
\begin{itemize}
\item what is the minimum number of degree-$d$ subtrees that partition
  $E(T)$?
\item what is the minimum number of degree-$d$ subtrees that cover
  $E(T)$?
\end{itemize}
Here a \emph{partition} of a graph $G$ is a set of connected subgraphs of $G$ such that every edge of $G$ is in exactly
one subgraph. A partition can also be thought of as a (non-proper)
edge-colouring, with one colour for each connected subgraph.  A
\emph{covering} of $G$ is a set of connected subgraphs of
$G$ such that every edge of $G$ is in at least one subgraph.  For
$d\geq2$, let $\minpart[d](G)$ be the minimum number of degree-$d$ connected subgraphs that partition $G$, and let
$\mincover[d](G)$ be the minimum number of degree-$d$
connected subgraphs that cover $G$. We emphasise that `trees' and
`subtrees' are necessarily connected.

In \secref{Partitioning} we answer the first question above. In
particular, we present an explicit formula for $\minpart[d](T)$, and
describe a linear time algorithm that finds the corresponding
partition (amongst other results).

The remainder of the paper addresses the second question above.
\secref{Paths} considers coverings of trees by paths (that is,
degree-2 subtrees). A tight bound on the number of paths is obtained,
amongst other combinatorial and algorithmic results.

Then \secref{TreeCovering} describes a polynomial time algorithm that
computes $\mincover[d](T)$ and the corresponding covering.
\secref{CompleteTrees} describes an example of this algorithm applied
to `complete' trees.

Then \secref{Caterpillars} studies minimum coverings of caterpillars
by degree-$d$ subtrees. Again tight bounds on the number of subtrees
are obtained. Coverings of caterpillars provide a natural precursor to
the results in \twosecref{RootedPathwidth}{UnrootedPathwidth}. These
sections establish tight upper bounds on the number of covering
subtrees in terms of the pathwidth and maximum degree of the
tree. Essentially, these results show that pathwidth is the right
parameter to consider when studying coverings of trees by degree-3
subtrees.

Finally, \secref{General} studies coverings of general and planar
graphs by connected subgraphs of bounded degree. While this problem
has not previously been explicitly studied, in the case $d=2$, a
related concept has been extensively studied.  Harary defined the
\emph{pathos} or \emph{path number} of a graph $G$, denoted by $p(G)$,
to be the minimum number of paths that partition $E(G)$; see
\citep{Lovasz-Covering,Nieminen75,Harary70,DK-DM00,HS72a,SCJ70,SJC72,Donald-JGT80}. \citet{HS72a}
defined the \emph{unrestricted path number} of a graph $G$, denoted by
$p^*(G)$, to be the minimum number of paths that cover $G$.  Since
every cycle is the union of two disjoint paths,
\begin{align*}
  \minpart[2](G)\leq p(G)\leq 2\cdot\minpart[2](G)\text{, and}\\
  \mincover[2](G)\leq p^*(G)\leq 2\cdot \mincover[2](G)\enspace,
\end{align*}
where the lower bounds on $p(G)$ and $p^*(G)$ become equalities if $G$
is a tree. Also concerning the $d=2$ case, Gallai conjectured that
$p(G)\leq\Ceil{\frac{n+1}{2}}$ for every connected graph $G$ with $n$
vertices. While this conjecture remains unsolved,
\citet{Lovasz-Covering} proved that $\minpart[2](G)\leq\Ceil{\frac{n}{2}}$
for every (not neccessarily connected) graph $G$;
also see \citep{Fan-JCTB05,Donald-JGT80,Pyber-JCTB96,DK-DM00,Fan-PathCovers-JGT}.

\section{Partitioning Trees}
\seclabel{Partitioning}

This section considers partitions of (the edge-set of) a tree into
bounded-degree subtrees\footnote{Note that the question for
  bounded-degree subforests is easily answered. A straightforward
  inductive argument proves that for every degree-$\Delta$ tree $T$,
  there is a partition of $E(T)$ into \ceil{\frac{\Delta}{d}}
  degree-$d$ subforests. This is just an edge-colouring such that
  every vertex is incident to at most $d$ monochromatic edges; see
  \citep{HdW94} for analogous results for general graphs. The bound of
  \ceil{\frac{\Delta}{d}} is best possible for every tree $T$, and
  there is a linear-time algorithm to compute the partition.}. First
we prove a formula for $\minpart[d](T)$. Interestingly, it only
depends on the degrees modulo $d$.

\begin{theorem}
  \thmlabel{MinPart} 
Let $T$ be a non-trivial tree with $n\geq 2$ vertices, and
  let $d\geq2$. Define
$$n_i:=\left|\left\{v\in V(T):\CEILFRAC{\deg(v)}{d}=\frac{\deg(v)+i}{d} \right\}\right|$$ for
$i\in[0,d-1]$. Then
$$\minpart[d](T)
=1+\sum_{v\in V(T)}\BRACKET{\CEIL{\frac{\deg(v)}{d}}-1}
=1+\frac{2(n-1)}{d}-n+\sum_{i=0}^{d-1}\frac{i\cdot n_i}{d} \enspace.$$
Moreover, there is a linear-time algorithm to compute $\minpart[d](T)$
and a corresponding partition.
\end{theorem}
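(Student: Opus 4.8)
The plan is to establish the lower bound $\minpart[d](T)\geq 1+\sum_{v}(\CEIL{\frac{\deg(v)}{d}}-1)$ and a matching upper bound, both flowing from one counting identity for partitions of a tree. First I would record that identity. Suppose $\{T_1,\dots,T_k\}$ partitions $E(T)$; we may assume each $T_j$ is non-trivial, since a trivial subgraph covers no edge and can be discarded. Write $p(v)$ for the number of parts containing $v$, which, since each part is a non-trivial subtree, is also the number of parts incident with $v$. Each $T_j$ is a tree, so $|V(T_j)|=|E(T_j)|+1$; summing over $j$ and using that the $T_j$ partition $E(T)$ gives $\sum_{v}p(v)=\sum_j|V(T_j)|=\sum_j(|E(T_j)|+1)=(n-1)+k$, that is, $k=\sum_v p(v)-(n-1)$.

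The lower bound is then immediate: since $T_j$ is degree-$d$, each part incident with $v$ uses at most $d$ of the $\deg(v)$ edges at $v$, so $p(v)\geq\CEIL{\frac{\deg(v)}{d}}$, and the identity gives $k\geq\sum_v\CEIL{\frac{\deg(v)}{d}}-(n-1)=1+\sum_v(\CEIL{\frac{\deg(v)}{d}}-1)$. The third expression in the statement is obtained from this by writing $\CEIL{\frac{\deg(v)}{d}}=\frac{\deg(v)+i}{d}$ for the residue $i\in[0,d-1]$ with $d\mid\deg(v)+i$, grouping vertices according to $i$, and substituting $\sum_v\deg(v)=2(n-1)$ — a one-line rearrangement.

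For the upper bound, together with the linear-time algorithm, I would exhibit a partition attaining the bound by \emph{vertex splitting}. At each vertex $v$, arbitrarily partition the $\deg(v)$ incident edges into exactly $\CEIL{\frac{\deg(v)}{d}}$ non-empty groups, each of size at most $d$ (possible since $1\leq\CEIL{\frac{\deg(v)}{d}}\leq\deg(v)$ and $d\CEIL{\frac{\deg(v)}{d}}\geq\deg(v)$); call these groups the \emph{copies} of $v$. Form the graph $G^{\ast}$ on the set of all copies, joining the copy of $u$ containing $uv$ to the copy of $v$ containing $uv$ for each edge $uv$ of $T$. Then $G^{\ast}$ is $T$ with every vertex split, so $|V(G^{\ast})|=\sum_v\CEIL{\frac{\deg(v)}{d}}$ and $|E(G^{\ast})|=n-1$, and the $T$-edges labelling the edges of any one component of $G^{\ast}$ span a subtree of $T$; these subtrees partition $E(T)$, one per component of $G^{\ast}$.

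What remains is the single claim that no simple path of $G^{\ast}$ joins two distinct copies of the same vertex of $T$. Granting it: a cycle of $G^{\ast}$ would contain such a path, so $G^{\ast}$ is a forest, hence has exactly $|V(G^{\ast})|-|E(G^{\ast})|=\sum_v\CEIL{\frac{\deg(v)}{d}}-(n-1)$ components; the construction thus produces exactly $1+\sum_v(\CEIL{\frac{\deg(v)}{d}}-1)$ subtrees, meeting the lower bound. Moreover the copies of each $v$ lie in distinct components, so every subtree in the partition contains at most one copy of $v$, hence at most $d$ edges at $v$, and is degree-$d$. To prove the claim I would project $G^{\ast}$ onto $T$ by collapsing copies: a simple path of $G^{\ast}$ between two copies of $v$ projects to a closed walk of positive length in the tree $T$, which must backtrack along some edge $e$, and backtracking along $e$ forces the path to revisit the unique copy of an endpoint of $e$ that contains $e$ — contradicting simplicity. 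I expect this backtracking-versus-simplicity step — in essence, that splitting all vertices of a tree creates neither a cycle nor a path between two copies of one vertex — to be the only genuinely delicate point; the rest (grouping the edges at each $v$, one traversal of $G^{\ast}$ to read off its components, outputting the subtrees) is routine and runs in $O(n)$ time.
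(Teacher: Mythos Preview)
Your proof is correct, and it takes a genuinely different route from the paper's. The paper proves both bounds simultaneously by induction on the tree: it finds a vertex $w$ all of whose neighbours but one are leaves, strips off those leaves $S$, applies induction to $T-S$, and then extends the optimal partition of $T-S$ by letting the colour on the single surviving edge at $w$ absorb $d-1$ of the leaf edges and grouping the rest into $\lceil\deg(w)/d\rceil-1$ stars; optimality is argued by the same induction, since at most $d-1$ leaf edges can share that colour.

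Your argument separates the two directions cleanly. The lower bound comes from the global identity $k=\sum_v p(v)-(n-1)$ (an Euler-type count over the subtrees of the partition) combined with the local inequality $p(v)\geq\lceil\deg(v)/d\rceil$; this avoids induction entirely and makes the extremal structure transparent: any optimal partition must have $p(v)=\lceil\deg(v)/d\rceil$ at every vertex. Your upper bound is likewise non-inductive: the vertex-splitting construction shows directly that \emph{any} grouping of the edges at each vertex into $\lceil\deg(v)/d\rceil$ blocks of size at most $d$ yields an optimal partition, which is a stronger statement than the paper's specific inductive construction and immediately gives the linear-time algorithm. The paper's inductive peeling is perhaps easier to verify line by line, but your counting-plus-splitting approach is more conceptual and reveals the full freedom in choosing an optimal partition.
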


\begin{proof}
  First note that
  \begin{align*}
    1+\sum_{v\in V(T)}\BRACKET{\CEIL{\frac{\deg(v)}{d}}-1}
    =&\;1+\sum_{v\in  V(T)}\BRACKET{\frac{\deg(v)}{d}-1}+\sum_{i=0}^{d-1}\frac{i\cdot n_i}{d}\\
    =&\;1+\frac{2(n-1)}{d}-n+\sum_{i=0}^{d-1}\frac{i\cdot
      n_i}{d}\enspace.
  \end{align*}
  Thus it suffices to prove the first equality.  We proceed by
  induction. In the base case, $T=K_2$ and $\minpart[d](T)=1$. Now
  assume that $T$ has at least three vertices. Thus $T$ has a set of
  leaves $S$ with a common neighbour $w$, such that $w$ is a leaf in
  $T-S$. By induction,
$$\minpart[d](T-S)=1+\sum_{v\in~V(T-S)}\BRACKET{\CEIL{\frac{\deg_{T-S}(v)}{d}}-1}\enspace.$$

To extend the partition of $T-S$, the colour that is assigned to the
edge in $T-S$ incident to $w$ can be assigned to $d-1$ edges incident
to $S$. There are $(\deg(w)-1)-(d-1)=\deg(w)-d$ remaining leaf edges
incident to $w$. These leaf edges can be partitioned into
$\Ceil{\frac{\deg(w)-d}{d}}=\Ceil{\frac{\deg(w)}{d}}-1$ stars rooted
at $w$, each with at most $d$ edges. This defines a partition of $T$
into $\minpart[d](T-S)+\Ceil{\frac{\deg(w)}{d}}-1$ subtrees, which
equals the claimed upper bound on $\minpart[d](T)$ since leaves do not
contribute to the summation.

It is easy to convert this proof into a linear-time algorithm. Here is
a sketch. Root $T$ at a vertex $r$, and partition the vertex sets
according to their distance from $r$ (using BFS). The set $S$ is
simply a maximal set of leaves at maximum distance $d$ from the root
with a common neighbour (at distance $d-1$). The partition is then
easily computed.

This bound on $\minpart[d](T)$ is optimal since at most $d-1$ edges
incident to $S$ can share the same colour as the edge in $T-S$
incident to $w$, and at least $\Ceil{\frac{\deg(w)-d}{d}}$ colours not
used in $T-S$ must be introduced on the remaining leaf edges.
\end{proof}

Note that \thmref{MinPart} with $d=2$ reduces to the following result
by \citet{SCJ70}:

\begin{corollary}[\citep{SCJ70}] 
  For every non-trivial tree $T$, $\minpart[2](T)$ equals half the
  number of odd-degree vertices.
\end{corollary}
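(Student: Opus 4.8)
The plan is to specialise \thmref{MinPart} to the case $d=2$ and simplify the closed-form expression. First I would substitute $d=2$ into the second formula for $\minpart[2](T)$, namely $1+\tfrac{2(n-1)}{2}-n+\sum_{i=0}^{1}\tfrac{i\cdot n_i}{2}$. The leading terms $1+(n-1)-n$ cancel, and the $i=0$ summand vanishes, so the whole expression collapses to $\minpart[2](T)=\tfrac{n_1}{2}$.

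Next I would identify $n_1$ combinatorially. By the definition in \thmref{MinPart}, $n_1$ counts the vertices $v$ with $\CEIL{\deg(v)/2}=\tfrac{\deg(v)+1}{2}$, and this holds precisely when $\deg(v)$ is odd (for even $\deg(v)$ one has $\CEIL{\deg(v)/2}=\tfrac{\deg(v)}{2}$, so such a vertex is counted by $n_0$ instead). Hence $n_1$ is exactly the number of odd-degree vertices of $T$, and the corollary follows immediately.

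There is essentially no obstacle here, since the statement is a direct specialisation; the only point worth remarking on is a consistency check: $n_1$ is even by the handshake lemma, so $\tfrac{n_1}{2}$ is a genuine integer, as it must be for a count of subtrees. As an alternative derivation that avoids the arithmetic identity, I could instead work from the first form $1+\sum_{v\in V(T)}\bigl(\CEIL{\deg(v)/2}-1\bigr)$, observing that a vertex of even degree $2k$ contributes $k-1$ while a vertex of odd degree $2k+1$ contributes $k$; summing these contributions and applying $\sum_{v\in V(T)}\deg(v)=2(n-1)$ recovers $\tfrac{n_1}{2}$ in the same way.
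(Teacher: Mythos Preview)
Your proposal is correct and is exactly the specialisation the paper intends: the paper itself gives no separate proof, merely noting that the corollary is what \thmref{MinPart} reduces to when $d=2$. Your computation that the formula collapses to $n_1/2$ and your identification of $n_1$ with the number of odd-degree vertices is precisely the content of that reduction.
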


\thmref{MinPart} also implies:

\begin{corollary}
  For every integer $d\geq2$ and tree $T$ with $n\geq2$ vertices,
$$\minpart[d](T)\leq1+\frac{n-2}{d},$$
with equality if and only if $\deg(v)\equiv1\pmod{d}$ for every vertex
$v$.
\end{corollary}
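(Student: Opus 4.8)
The plan is to obtain this corollary as an immediate consequence of the closed formula in \thmref{MinPart}; no further combinatorial argument is needed, only an elementary termwise estimate on the ceiling function.

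I would start from the second expression in \thmref{MinPart},
$$\minpart[d](T)=1+\frac{2(n-1)}{d}-n+\sum_{i=0}^{d-1}\frac{i\cdot n_i}{d},$$
noting that the vertex subsets counted by $n_0,\dots,n_{d-1}$ partition $V(T)$: each $v\in V(T)$ contributes to exactly the one $n_i$ with $i\equiv-\deg(v)\pmod d$, so $\sum_{i=0}^{d-1}n_i=n$. Since every index satisfies $i\leq d-1$, we get $\sum_{i=0}^{d-1}i\cdot n_i\leq(d-1)\sum_{i=0}^{d-1}n_i=(d-1)n$. Substituting this and simplifying $1+\frac{2(n-1)}{d}-n+\frac{(d-1)n}{d}$ gives exactly $1+\frac{n-2}{d}$, the claimed bound. (Equivalently, one may work from the first expression in \thmref{MinPart} via the termwise inequality $\CEIL{\deg(v)/d}-1\leq\frac{\deg(v)-1}{d}$ and sum over $v$ using $\sum_v\deg(v)=2(n-1)$.)

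For the equality case, observe that the only inequality used is $\sum_{i}i\cdot n_i\leq(d-1)n$; writing the slack as $\sum_{i=0}^{d-1}(d-1-i)\,n_i$ shows it is tight precisely when $n_i=0$ for all $i<d-1$, i.e.\ $n_{d-1}=n$. By the definition of $n_{d-1}$ this says $\CEILFRAC{\deg(v)}{d}=\frac{\deg(v)+d-1}{d}$ for every vertex $v$, which holds if and only if $d\mid(\deg(v)+d-1)$, i.e.\ $\deg(v)\equiv1\pmod d$; this yields both directions of the ``if and only if''. I do not expect any genuine obstacle: \thmref{MinPart} does all the work, and the only point needing a moment's care is the equivalence between ``the termwise bound being tight at every vertex'' and the congruence $\deg(v)\equiv1\pmod d$, which follows from the identity $\lceil m/d\rceil=\lfloor(m-1)/d\rfloor+1$ for positive integers $m$.
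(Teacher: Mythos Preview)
Your argument is correct and is exactly the derivation the paper has in mind: the corollary is stated without proof as an immediate consequence of \thmref{MinPart}, and your termwise estimate $\sum_i i\,n_i\le(d-1)n$ together with $\sum_i n_i=n$ is the natural way to extract the bound and characterise equality.
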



A degree-$d$ subtree $X$ of a tree $T$ is \emph{degree-$d$ maximal} if
no edge of $T$ can be added to $X$ to obtain a new degree-$d$
subtree. Observe that $X$ is degree-$d$ maximal if and only if
$\deg_X(v)=\minm{d,\deg_T(v)}$ for every vertex $v$ of $X$. In
particular, $v$ is leaf in $X$ if and only if $v$ is a leaf in
$T$. Clearly every degree-$d$ subtree is contained in a maximal
degree-$d$ subtree.

\begin{proposition}
  Every degree-$d$ maximal subtree $S$ of a tree $T$ is in a minimum
  partition of $T$ into degree-$d$ subtrees.
\end{proposition}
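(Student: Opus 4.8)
The plan is to exhibit an explicit minimum partition of $T$ into degree-$d$ subtrees having $S$ as one of its parts; optimality then follows by comparing its size with the formula of \thmref{MinPart}.

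First I would analyse the forest $F:=T-E(S)$. Since $S$ is degree-$d$ maximal, $\deg_S(v)=\min\{d,\deg_T(v)\}$ for every $v\in V(S)$; hence a vertex $v\in V(S)$ is incident to an edge of $F$ exactly when $v\in W:=\{v\in V(S):\deg_T(v)>d\}$, and then it is incident to exactly $\deg_T(v)-d$ edges of $F$. Using that $T$ is a tree and $S$ a subtree I would establish: (i) the unique $u$--$v$ path of $T$ between two vertices of $V(S)$ lies in $S$, so no component of $F$ contains two vertices of $V(S)$; and (ii) every component of $F$ with an edge meets $V(S)$, since the $T$-path from an endpoint of such an edge to $V(S)$ reaches $V(S)$ for the first time along edges each having an endpoint outside $V(S)$, hence avoiding $E(S)$. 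Consequently the components of $F$ containing an edge are precisely the subtrees $T_v$ for $v\in W$, where $T_v$ denotes the component of $F$ containing $v$; each $T_v$ has $\deg_{T_v}(v)=\deg_T(v)-d$ and $\deg_{T_v}(u)=\deg_T(u)$ for all $u\in V(T_v)\setminus\{v\}$, and the sets $V(S)$ and $V(T_v)\setminus\{v\}$ (over $v\in W$) partition $V(T)$.

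Next I would take the partition $\mathcal{P}$ consisting of $S$ together with a minimum partition of each $T_v$ into degree-$d$ subtrees (one exists by \thmref{MinPart}). Then $\mathcal{P}$ is a partition of $E(T)$ into degree-$d$ subtrees, it contains $S$, and $|\mathcal{P}|=1+\sum_{v\in W}\minpart[d](T_v)$. Substituting the formula of \thmref{MinPart} into each $\minpart[d](T_v)$, using $\lceil(\deg_T(v)-d)/d\rceil=\lceil\deg_T(v)/d\rceil-1$, dropping the vertices of $V(S)$ of degree at most $d$ (whose contribution is $0$), and recombining the resulting double sum by means of the vertex partition above, I would get $|\mathcal{P}|=1+\sum_{v\in V(T)}(\lceil\deg_T(v)/d\rceil-1)=\minpart[d](T)$. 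Hence $\mathcal{P}$ is a minimum partition containing $S$.

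The delicate part is the structural analysis (i)--(ii) of $F$ and the subsequent bookkeeping ensuring that every vertex of $T$ is counted exactly once with the right coefficient; I anticipate no genuine obstacle beyond treating leaves of $S$ (which are leaves of $T$ and contribute nothing) and the degenerate case $S=T$, where $W=\emptyset$, $F$ has no edges, and $\minpart[d](T)=1$.
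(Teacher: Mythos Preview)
Your proposal is correct and follows essentially the same approach as the paper: both arguments take $S$ together with a minimum partition of each non-trivial component $T_v$ of $T-E(S)$, then verify optimality by substituting the formula of \thmref{MinPart} for each $\minpart[d](T_v)$ and using $\lceil(\deg_T(v)-d)/d\rceil=\lceil\deg_T(v)/d\rceil-1$. Your set $W$ coincides with the paper's set $N$ of vertices $v\in V(S)$ with $T_v$ non-trivial, and your structural points (i)--(ii) supply a little more justification for a decomposition the paper treats as evident.
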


\begin{proof}
  For each vertex $v$ of $S$, let $T_v$ be the component of $T-E(S)$
  that contains $v$. Note that $T_v$ is trivial (that is, $v$ is the
  only vertex in $T_v$) if every edge incident to $v$ is in $S$. If
  $T_v$ is non-trivial then $\deg_S(v)=d$, as otherwise $S$ is not
  maximal. Let $N$ be the set of vertices $v$ in $S$ such that $T_v$
  is non-trivial. Taking $S$ with a minimum partition of each $T_v$
  into degree-$d$ subtrees (where $T_v$ is non-trivial) gives a
  partition of $T$ into
$$1 + \sum_{v\in N}\minpart[d](T_v)$$
parts. By \thmref{MinPart},
\begin{align*}
  &1 + \sum_{v\in N}\minpart[d](T_v)\\
  \leq\;& 1 + \sum_{v\in N}
  \BRACKET{1+\sum_{x\in V(T_v)}\BRACKET{\CEIL{\frac{\deg_{T_v}(x)}{d}}-1}} \\
  =\;& 1 + \sum_{v\in N}
  \BRACKET{1+\BRACKET{\CEIL{\frac{\deg_{T}(v)-d}{d}}-1}+
    \sum_{x\in V(T_v-v)}\BRACKET{\CEIL{\frac{\deg_{T_v}(x)}{d}}-1}} \\
  =\;& 1 + \sum_{v\in N}
  \BRACKET{\CEIL{\frac{\deg_{T}(v)}{d}}-1+\sum_{x\in
      V(T_v-v)}\BRACKET{\CEIL{\frac{\deg_{T_v}(x)}{d}}-1}} \enspace.
\end{align*}
Observe that if $x$ is in $V(T)-V(S)$, then $x$ is in exactly one
subtree $T_v$, and this $T_v$ is non-trivial, and
$\deg_{T_v}(x)=\deg_T(x)$. Thus
\begin{align*}
  1 + \sum_{v\in N}\minpart[d](T_v) & \leq 1 + \sum_{x\in
    V(T)-V(S)\cup N}\BRACKET{\CEIL{\frac{\deg_{T}(x)}{d}}-1} \enspace.
\end{align*}
If $T_v$ is trivial then $\deg_S(v)=\deg_T(v)\leq d$, as otherwise $S$
is not maximal. Thus $\CeilFrac{\deg_T(v)}{d}-1=0$. Hence
\begin{align*}
  1 + \sum_{v\in N}\minpart[d](T_v)& \leq 1 + \sum_{x\in
    V(T)}\BRACKET{\CEIL{\frac{\deg_{T}(x)}{d}}-1} \enspace.
\end{align*}
Hence this partition is minimum by \thmref{MinPart}.
\end{proof}

\section{Coverings by Paths}
\seclabel{Paths}

This section studies coverings of trees by degree-2 subtrees.  Since a
subtree is degree-2 if and only if it is a path, $\minpart[2](T)$ is
the minimum number of paths that cover $T$.  Since each path covers at
most two leaves, if $T$ has $\ell$ leaves then at least
$\Ceil{\frac{\ell}{2}}$ paths are required. \citet{HS72a} first proved
the converse:

\begin{theorem}[\citep{HS72a,AKG-DM96}]
  \thmlabel{AKG} The minimum number of paths that cover a tree with
  $\ell$ leaves is $\Ceil{\frac{\ell}{2}}$.\qed
\end{theorem}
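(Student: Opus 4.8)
The lower bound is the easy direction and is already established: every path meets at most two leaves, so at least $\lceil\ell/2\rceil$ paths are needed. The plan is therefore to prove the converse — that $T$ can be covered by $\lceil\ell/2\rceil$ paths — in two stages: first reduce to the case that $\ell$ is even, and then, for even $\ell$, exhibit an explicit pairing of the leaves whose $\ell/2$ joining paths already cover every edge of $T$.

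For the reduction, assume $\ell$ is odd (so $\ell\ge3$ and $T$ is not a path), pick a leaf $u$ with neighbour $v$, and attach a new pendant vertex $u'$ to $v$, obtaining a tree $T^{+}$ with $\ell+1$ leaves. Assuming the even case is settled, $E(T^{+})$ is covered by $(\ell+1)/2$ paths; at least one of them uses the pendant edge $vu'$, and since $u'$ is a leaf it is an endpoint of every path through it. Deleting $u'$ from every path of the cover that contains it yields a family of at most $(\ell+1)/2=\lceil\ell/2\rceil$ paths (discarding any that became trivial) whose edges cover $E(T^{+})\setminus\{vu'\}=E(T)$, as required.

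Now take $\ell$ even. I would root $T$ at an arbitrary vertex, run a depth-first search, and list the leaves of $T$ as $x_0,\dots,x_{\ell-1}$ in order of first visit. The structural fact to lean on is this: for every edge $e$, the set $B$ of (indices of) leaves of $T$ lying in one of the two components of $T-e$ is a \emph{cyclic interval} of $x_0,\dots,x_{\ell-1}$, and $1\le|B|\le\ell-1$ — indeed, if $e$ joins a vertex to its parent, the leaves of $T$ in the subtree below $e$ are visited consecutively, hence form a contiguous block, the leaves of $T$ in the other component form the complementary cyclic block, and each component of $T-e$ contains at least one leaf of $T$. I would then pair $x_i$ with $x_{i+\ell/2}$ (indices modulo $\ell$) for $i=0,\dots,\ell/2-1$ and take the $\ell/2$ paths of $T$ joining the pairs. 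To check that they cover $E(T)$, fix an edge $e$ and the associated set $B\subseteq\mathbb{Z}/\ell\mathbb{Z}$: the path joining $x_i$ and $x_{i+\ell/2}$ contains $e$ precisely when exactly one of $i,i+\ell/2$ lies in $B$. If no pair had this property then $i\in B\iff i+\ell/2\in B$ for all $i$, i.e.\ $B+\tfrac{\ell}{2}=B$; but a cyclic interval $B$ with $1\le|B|\le\ell-1$ has a unique ``starting element'' $a$, the one with $a-1\notin B$, and $B+\tfrac{\ell}{2}$ is then a cyclic interval of the same size starting at $a+\tfrac{\ell}{2}\neq a$, a contradiction. Hence every edge is covered.

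The one step I expect to need genuine care is the structural claim that the leaves of $T$ on one side of each edge form a cyclic interval of the DFS leaf order: one must keep the count restricted to leaves of $T$ (a component of $T-e$ can acquire new degree-one vertices that are not leaves of $T$) and handle the boundary case where the chosen root happens to itself be a leaf. Everything after that — in particular the short parity/periodicity argument above — is routine. A purely elementary alternative avoids the traversal and inducts on $\ell$: for $\ell\ge3$ take a longest path with leaf endpoints $u,w$, let $b$ be the branch vertex on it nearest $u$, pick a leaf $z$ farthest from $b$ inside a component of $T-b$ avoiding $u$ and $w$, use the path from $u$ through $b$ to $z$ as one covering path, and recurse on what remains after deleting the two ``used'' pendant segments; but tracking the number of surviving leaves and its parity is more delicate than the pairing argument, so I would favour the latter.
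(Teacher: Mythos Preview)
Your argument is correct. Note, however, that the paper does not itself prove \thmref{AKG}: the statement is attributed to \citep{HS72a,AKG-DM96} and closed with a \qed. The paper does later remark that \thmref{FewLeaves} with $d=2$ recovers \thmref{AKG} ``with a completely different proof'', and \thmref{SizePathCoveringEven} gives yet another construction for even $\ell$ (an Eulerian-style pairing on the multigraph obtained by doubling the even--even edges).

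Your route is genuinely different from both. The proof of \thmref{FewLeaves} is an induction on $\ell$ that peels off $d$ well-chosen leaves at a time, branching on whether there are at least $d$ branch vertices each adjacent to a leaf; the even-$\ell$ construction in \thmref{SizePathCoveringEven} forces every non-leaf vertex to have even degree and then decomposes into leaf-to-leaf trails. Your DFS antipodal-pairing argument is cleaner than either for the bare statement: the $\ell/2$ paths are given explicitly with no induction and no degree-parity bookkeeping, and the coverage check reduces to the single observation that a proper cyclic interval of $\mathbb{Z}/\ell\mathbb{Z}$ cannot be invariant under the shift by $\ell/2$. What the paper's approaches buy in exchange is extra information your pairing does not immediately yield --- \thmref{SizePathCoveringEven} also controls the total number of edges in the covering, and \thmref{FewLeaves} handles subtrees with at most $d$ leaves for every $d\geq2$.
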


In this section we explore this topic further. First, we investigate
the total number of edges in a covering of a tree by the minimum
number of paths. Let $P$ be a path in a tree $T$. Then $P$ is
\emph{leafy} if both endpoints of $P$ are leaves in $T$. And $P$ is a
\emph{pendant path} if one endpoint of $P$ is a leaf in $T$, the other
endpoint of $P$ has degree at least 3 in $T$, and every internal
vertex in $P$ has degree 2 in $T$.

\begin{theorem}
  \thmlabel{SizePathCovering} Let $T$ be a tree with $n$ vertices and
  $\ell$ leaves. Then $T$ has a covering by $\Ceil{\frac{\ell}{2}}$
  paths with $2n-2-\ell$ edges in total.  Moreover, for infinitely
  many trees $T$, every covering of $T$ by $\Ceil{\frac{\ell}{2}}$
  paths has at least $2n-2-\ell$ edges in total.
\end{theorem}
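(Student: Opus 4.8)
The statement has two independent halves: the \emph{existence} of a covering by $\Ceil{\frac{\ell}{2}}$ paths with at most $2n-2-\ell$ edges in total, and a matching lower bound valid for infinitely many $T$. (Note the displayed count is wrong for $K_2$, so throughout assume $n\geq3$.)

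For the existence half I would argue by induction on $n$, in the spirit of the proof of \thmref{MinPart}. Pick a vertex $w$ such that at most one neighbour of $w$ is not a leaf --- for instance the neighbour of an endpoint of a longest path --- and let $s$ be the number of leaf-neighbours of $w$. If $w$ has \emph{no} non-leaf neighbour then $T$ is a star $K_{1,n-1}$, and the claim is immediate: cover $T$ by $\FloorFrac{n-1}{2}$ length-$2$ paths through $w$, plus one extra edge as a path if $n-1$ is odd, for a total of $n-1=2n-2-\ell$ edges and $\Ceil{\frac{\ell}{2}}$ paths. Otherwise $w$ has exactly one non-leaf neighbour $p$, and $\deg(w)=s+1$ with $s\geq1$. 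If $s=1$, delete the leaf-neighbour of $w$: then $w$ becomes a leaf, the leaf-count is unchanged, and the path of the inductively obtained covering of $T'$ that ends at $w$ is extended by one edge (no new path). If $s\geq3$, delete $s-1$ leaf-neighbours of $w$ (an even number), so $w$ keeps one leaf-neighbour and does not become a leaf; apply induction to $T'$ and add $\frac{s-1}{2}$ length-$2$ paths through $w$ covering the deleted edges. A routine check shows the edge total rises by exactly $s-1$, matching the change of $2n-2-\ell$ (since $n=n'+s-1$, $\ell=\ell'+s-1$), and the path total rises by $\frac{s-1}{2}=\Ceil{\frac{\ell}{2}}-\Ceil{\frac{\ell'}{2}}$. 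The case $s$ even and $\geq4$ is identical, deleting $s-2$ leaf-neighbours.

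The only troublesome case --- and the main obstacle --- is $s=2$ with $\ell$ even: here $w$ has exactly two leaves $z_1,z_2$ and a degree-$3$ structure, and deleting both leaves turns $w$ into a leaf, changing the leaf-count by the \emph{odd} amount $-1$, so the naive re-insertion of a path overshoots $\Ceil{\frac{\ell}{2}}$ by one. I would resolve this by strengthening the inductive hypothesis so that, when $\ell$ is odd, it also records structural information about one covering path (e.g.\ that a prescribed leaf is an endpoint of a path whose other endpoint is again a leaf, or that one path may be taken to be a single pendant path); this lets the would-be extra path be absorbed into the path of $\mathcal P'$ ending at $w$. Equivalently, one deletes $\{z_1,z_2,w\}$ and reroutes the covering of $T-\{z_1,z_2,w\}$ near $p$ before re-attaching the cherry, checking the two subcases $\deg_T(p)=2$ and $\deg_T(p)\geq3$ separately. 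I expect most of the real work of the proof to sit in making this bookkeeping airtight.

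For the lower bound it suffices to exhibit one infinite family, and the \emph{double stars} $D_{2t}$ --- two adjacent vertices $u,v$, each with $2t$ pendant leaves --- work: $n=4t+2$, $\ell=4t$, $\Ceil{\frac{\ell}{2}}=2t$, and $2n-2-\ell=4t+2$. Let $\mathcal P$ be any covering of $D_{2t}$ by $2t$ paths (such exist by \thmref{AKG}). Each of the $4t$ leaves, having degree $1$, is an endpoint of some path of $\mathcal P$; since $\mathcal P$ has only $2\cdot2t=4t$ endpoint-slots, each leaf is an endpoint of exactly one path and neither $u$ nor $v$ is ever an endpoint. Hence every path through $u$ (resp.\ $v$) has it as an interior vertex; writing $d_u$, $d_v$ for the numbers of paths through $u$, $v$, these cover $2d_u$, $2d_v$ of the incident edges with multiplicity, so $2d_u\geq\deg(u)=2t+1$ and likewise for $v$, giving $d_u,d_v\geq t+1$. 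Therefore, by a handshake count over all paths,
$$\sum_{P\in\mathcal P}|E(P)|=\tfrac12\sum_{x\in V(T)}\sum_{P\ni x}\deg_P(x)=\tfrac12\Big(\sum_{\text{leaves}}1+2d_u+2d_v\Big)\geq\tfrac12\big(4t+2(t+1)+2(t+1)\big)=4t+2,$$
which equals $2n-2-\ell$, as required; combined with the existence half, equality holds for every $D_{2t}$. The same degree-counting idea gives tightness for further families (e.g.\ complete binary trees), although there it must be reinforced, since the handshake bound alone is not tight.
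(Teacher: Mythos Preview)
Your lower-bound argument is correct and pleasantly different from the paper's. The paper uses spiders with cherries (a subdivided $p$-leaf star with two pendant leaves attached at each foot) and argues combinatorially that every edge of the underlying spider must lie in at least two covering paths. Your double-star family with the handshake count is a clean alternative; the key step $2d_u\geq\deg(u)=2t+1$ is exactly the right local obstruction.

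The upper bound, however, has a real gap. You correctly isolate the obstacle --- the case $s=2$ with $\ell$ even, where deleting the cherry changes the leaf count by an odd amount --- but you do not resolve it: you only propose strengthening the induction hypothesis and say you ``expect most of the real work'' to lie there. That is the entire difficulty. Concretely, once $w$ becomes a leaf in $T'$ with $\ell'$ odd, you need the spare endpoint among the $\ell'+1$ endpoint-slots to sit at $w$ so that both cherry edges can be absorbed without adding a path; arranging this for an \emph{arbitrary} prescribed leaf $w$ is a nontrivial additional statement that must itself be carried through the induction (and interacts with all your other cases). Until that is written out, the existence half is not proved.

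The paper sidesteps all of this with an extremal argument that you should know about, since it replaces the whole case analysis. Start from any covering $\mathcal{P}$ by $\Ceil{\ell/2}$ paths that minimises $\sum_{P\in\mathcal{P}}|E(P)|$ (such coverings exist by \thmref{AKG}). Minimality immediately forces each leaf to lie in exactly one path and hence every path (except possibly one pendant path when $\ell$ is odd) to be leafy. Then one shows, by a local swap, that no edge lies in three paths of $\mathcal{P}$: if $e\in P_1\cap P_2\cap P_3$, pass to the maximal degree-$2$ corridor $R$ through $e$, split $P_1\cup P_2$ at the two ends of $R$, and recombine to save $2|E(R)|$ edges while keeping $\Ceil{\ell/2}$ paths. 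Since in addition no leaf edge can lie in two paths, the total edge count is at most $2(n-1)-\ell$. This is a two-paragraph proof with no induction and no parity bookkeeping; compare that to the case split your approach requires.
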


\begin{proof}
  Let \PP\ be a set of $\Ceil{\frac{\ell}{2}}$ paths that cover $T$
  and minimise the total number of edges. By \thmref{AKG} this is well
  defined. Each leaf is in exactly one path in \PP\ (by the minimality
  of \PP). Each path in \PP\ covers at most two leaves. Thus every
  path in \PP\ is leafy, except if $\ell$ is odd, in which case, one
  path in \PP\ is a pendant path, and every other path is leafy. Also
  note that this pendant path shares no edge in common with another
  path in \PP\ (again by the minimality of \PP).

  Suppose on the contrary that some edge $e$ of $T$ is in three
  distinct paths $P_1,P_2,P_3\in\PP$. Thus each $P_i$ is leafy. Let
  $R$ be an edge-maximal path in $T$ that contains $e$, such that
  every internal vertex of $R$ has degree exactly 2. (It is possible
  that $e$ is the only edge in $R$.)\ Observe that $R$ is contained in
  each of $P_1,P_2,P_3$. Let $v$ and $w$ be the endpoints of $R$.  Let
  $T_v$ and $T_w$ be the two component subtrees of $T-E(R)$, such that
  $v$ is in $T_v$ and $w$ is in $T_w$. Let $Q_v:=T_v\cap(P_1\cup P_2)$
  and $Q_w:=T_w\cap(P_1\cup P_2)$. Thus $Q_v,Q_w,P_3$ are three paths
  that cover $P_1\cup P_2\cup P_3$, but with $2|E(R)|$ less edges in
  total. Hence, replacing $P_1$ and $P_2$ by $Q_v$ and $Q_w$ in \PP,
  produces a covering of $T$ by $\Ceil{\frac{\ell}{2}}$ paths with
  less edges in total. This contradiction proves that every edge in
  $T$ is in at most two paths in \PP. Hence \PP\ has at most $2(n-1)$
  edges in total. Moreover, no leaf edge of $T$ is in two paths in
  \PP. Thus \PP\ has at most $2n-2-\ell$ edges in total.

  To prove that the lower bound, let $T_0$ be a subdivision of the
  $p$-leaf star. Say $T_0$ has $q$ vertices. Let $v_1,\dots,v_p$ be
  the leaves of $T_0$. Let $T$ be the tree obtained from $T_0$ by
  adding two leaves $u_i$ and $w_i$ adjacent to $v_i$, for each
  $i\in\{1,\dots,p\}$. So $T$ has $n:=q+2p$ vertices and $\ell:=2p$
  leaves. Let \PP\ be a set of $p$ paths that cover $T$. Each path in
  \PP\ connects two leaves of $T$. But no path $u_iv_iw_i$ is in \PP\
  (otherwise $|\PP|>p$). Hence each edge in $T_0$ is in at least two
  paths in \PP. It follows that \PP\ has at least
  $2(q-1)+2p=2n-\ell-2$ edges in total.
\end{proof}

We now sharpen \thmref{SizePathCovering} for trees with an even number
of leaves.  Let $L(T)$ be the set of leaves in a tree $T$.  Let $vw$
be an edge in $T$.  Let $T_v$ and $T_w$ be the components of $T-vw$
that respectively contain $v$ and $w$. Then $vw$ is said to be
\emph{even--even} if $|T_v\cap L(T)|$ and $|T_w\cap L(T)|$ are both
even. Let $\ee(T)$ be the number of even--even edges in $T$.

\begin{theorem}
  \thmlabel{SizePathCoveringEven} Let $T$ be a tree with $n$ vertices
  and $\ell$ leaves, where $\ell$ is even. Then $T$ has a covering by
  $\frac{\ell}{2}$ paths with $n-1+\ee(T)$ edges in total, and this
  covering can be computed in $O(n)$ time.  Moreover, every covering
  of $T$ by $\frac{\ell}{2}$ paths has at least $n-1+\ee(T)$ edges.
\end{theorem}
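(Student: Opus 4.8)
The plan is to recast coverings of $T$ by $\half\ell$ paths as perfect matchings of the leaf set $L(T)$, derive the lower bound from a parity argument applied edge by edge, and derive the matching upper bound (together with the $\Oh{n}$ algorithm) from a bottom-up greedy pairing of the leaves. For the reformulation: since $\ell$ is even, in any covering $\PP$ of $T$ by $\half\ell$ paths every leaf is an endpoint of exactly one path and every path is leafy. Indeed, by \thmref{AKG} no path of $\PP$ is trivial; each leaf edge lies on some path, which forces its leaf-endpoint to be an endpoint of that path, so the $\ell$ leaves fill all $2\cdot\half\ell=\ell$ path-endpoint slots one apiece, and a leaf (having degree $1$ in $T$) lies on no further path. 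As $T$ is a tree, a leafy path is the unique $T$-path between its two leaf-endpoints, so coverings $\PP$ of $T$ by $\half\ell$ paths correspond bijectively to perfect matchings $M$ of $L(T)$ via $\PP=\{P_{xy}:\{x,y\}\in M\}$, where $P_{xy}$ is the $T$-path from $x$ to $y$; and the total number of edges of $\PP$ equals $\sum_{e\in E(T)}c_M(e)$, where $c_M(e)$ is the number of pairs of $M$ separated by $e$.

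\textbf{Lower bound.} Fix an edge $e$ of $T$ and let $k$ be the number of leaves of $T$ in one component of $T-e$. For any perfect matching $M$ of $L(T)$, the leaves on that side not separated by $e$ are matched among themselves, so $c_M(e)\equiv k\pmod 2$; and $c_M(e)\geq 1$ because $\PP$ covers $e$. For $\ell$ even, $e$ is even--even precisely when $k$ is even, so $c_M(e)\geq 2$ on the even--even edges and $c_M(e)\geq 1$ on all other edges, whence $\sum_e c_M(e)\geq (n-1)+\ee(T)$.

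\textbf{Upper bound and algorithm.} Root $T$ at a non-leaf vertex (one exists unless $T=K_2$, a trivial case with no even--even edges). For a vertex $v$ let $L_v$ be the number of leaves of $T$ in the subtree $T_v$, and put $t(v):=1$ if $L_v$ is odd and $t(v):=2$ if $L_v$ is even. Processing the vertices in post-order, I would maintain a matching $M_v$ of $L(T_v)$ leaving exactly $t(v)$ leaves unmatched: at a leaf take $M_v=\emptyset$; at an internal $v$ with children $w_1,\dots,w_j$ take $\bigcup_i M_{w_i}$ together with a maximum set of new pairs, each joining two of the $\sum_i t(w_i)$ currently-unmatched leaves lying in \emph{different} subtrees $T_{w_i}$, chosen so that exactly $t(v)$ leaves remain unmatched; at the root leave $0$ unmatched. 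This is always possible: $L_v=\sum_i L_{w_i}$ gives $t(v)\equiv\sum_i t(w_i)\pmod 2$, and since each group of unmatched leaves has size $t(w_i)\in\{1,2\}$, the largest group exceeds the combined size of the others by at most $t(v)$ (and by at most $0$ at the root, which has at least two children and total $\equiv\ell\equiv 0$) — exactly the condition for a cross-group matching with the prescribed number of leftover leaves to exist (the parity and size constraints being immediate). Setting $M:=M_r$: for an edge $e=vw$ with $w$ the parent of $v$, the leaves of $T_v$ matched by $M$ to leaves outside $T_v$ are exactly the $t(v)$ leaves left unmatched by $M_v$, so $c_M(e)=t(v)\in\{1,2\}$; hence every edge is covered, $c_M(e)=2$ on exactly the even--even edges, and $\PP$ has $\sum_e t(e)=(n-1)+\ee(T)$ edges, meeting the lower bound. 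The construction is a single post-order traversal with $\Oh{\deg v}$ work at each $v$, hence runs in $\Oh{n}$ time.

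\textbf{Main obstacle.} The step needing the most care is the feasibility of the greedy pairing: one must verify that always passing up the least positive number of leaves of the correct parity never strands three or more leaves in a single child subtree (which would make some edge impossible to cover within the budget) and that the process closes off to a perfect matching at the root. Both facts reduce to the bound $t(w)\le 2$, but the small cases — a degree-$2$ internal vertex, and the root — should be checked explicitly.
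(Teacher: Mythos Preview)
Your proof is correct, and the lower bound argument is essentially the paper's (both reduce to the parity observation that an edge covered exactly once must split the leaves odd--odd). One small imprecision: you say coverings by $\half\ell$ paths correspond \emph{bijectively} to perfect matchings of $L(T)$, but in fact not every perfect matching yields a covering (some leave edges uncovered). You never actually use surjectivity, so this does no damage, but the word ``bijectively'' should be dropped.

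For the upper bound, your construction is genuinely different from the paper's. The paper forms the multigraph $G$ obtained from $T$ by doubling every even--even edge, observes that every non-leaf vertex has even degree in $G$ (since the number of odd-side neighbours of $v$ is even, as their leaf-counts sum to $\ell$), pairs the $G$-edges at each internal vertex, and follows the paired-edge trails from leaf to leaf. This Eulerian-style argument avoids all case analysis: the edge-count $n-1+\ee(T)$ is simply $|E(G)|$, and the number of trails is forced to be $\half\ell$ because the leaves are exactly the odd-degree vertices of $G$. Your bottom-up leaf-matching construction is more explicit about \emph{which} leaves get paired and makes the $O(n)$ running time completely transparent; the price is the feasibility check for the greedy step, which (as you note) needs the small cases verified by hand. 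Both are clean, but the paper's approach trades your inductive bookkeeping for a single global parity fact.
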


\begin{proof}
  Let $G$ be the multigraph obtained from $T$ by adding a second copy
  of each even--even edge in $T$.  Consider a non-leaf vertex $v$ of
  $T$. For each neighbour $w$ of $v$, let $T_w$ be the component of
  $T-v$ that contains $w$. Since $\ell$ is even, there are an even
  number of neighbours $w$ of $v$ such that $|V(T_w)\cap L(T)|$ is
  odd. If $|V(T_w)\cap L(T)|$ is even then $vw$ is doubled in
  $G$. Hence $v$ has even degree in $G$. Arbitrarily pair the edges
  incident to $v$ in $G$. By following sequences of paired edges in
  $G$ we obtain the desired covering of $T$. Since $G$ has
  $n-1+\ee(T)$ edges, the total number of edges in the paths is
  $n-1+\ee(T)$.

  The numbers $|V(T_w)\cap L|$ can be computed in a single traversal
  of the tree. The pairing step at each vertex $v$ can be implemented
  in $O(\deg(v))$ time, which is $O(n)$ in total. To output the paths,
  choose a leaf vertex $v$, find the maximal path $P$ starting at $v$
  in $G$, delete the edges in $P$ from $G$, and repeat. This algorithm
  can be easily implemented in $O(n)$ time.

  We now prove the `moreover' claim. Let \PP\ be a set of
  $\frac{\ell}{2}$ paths that cover $T$.  Each leaf is in some path in
  \PP, and each path in \PP\ covers at most two leaves. Thus each leaf
  is in exactly one path in \PP, and the endpoints of each path in
  \PP\ are leaves.

  Consider an edge $vw$ that appears in only one path $P\in\PP$. Then
  $P$ connects a leaf in $T_v\cap L(T)$ with a leaf in $T_w\cap
  L(T)$. Every other path in \PP\ is contained in $T_v$ or in
  $T_w$. Each such path has both endpoints in $T_v\cap L(T)$ or both
  endpoints in $T_w\cap L(T)$. Thus $|T_v\cap L(T)|$ and $|T_w\cap
  L(T)|$ are both odd. Hence each even-even edge is in at least two
  paths in $\PP$. Therefore the total number of edges in \PP\ is at
  least $n-1+\ee(T)$ edges.
\end{proof}

We now show that there is a minimal covering of $T$ by paths with
other properties.

\begin{proposition}
  Let $T$ be a tree with $\ell$ leaves. Then $T$ has a covering by
  $\Ceil{\frac{\ell}{2}}$ paths that have a vertex in common.
\end{proposition}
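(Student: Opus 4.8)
The plan is to choose a single vertex $c$ through which all the covering paths will pass: pair up the leaves of $T$ so that paired leaves lie in different branches at $c$, take the path joining each pair, and — when $\ell$ is odd — add one path from the leftover leaf to $c$.

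If $\ell\le2$ then $T$ is a path and a single path covers $T$, so assume $\ell\ge3$. The first step is to produce a vertex $c$ such that every component of $T-c$ contains at most $\Floor{\frac{\ell}{2}}$ leaves of $T$. This is the standard centroid argument, applied with weight $1$ on each leaf of $T$ and weight $0$ on every other vertex: orient each edge of $T$ towards the side carrying more than half of the leaves (at most one side of an edge can be ``heavy'', so this is well defined), and let $c$ be a sink of the orientation. Since $\ell\ge3$, this $c$ is not a leaf, for otherwise $T-c$ would be connected and contain all $\ell-1\ge2$ remaining leaves, contradicting $\ell-1\le\Floor{\frac{\ell}{2}}$. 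Let $C_1,\dots,C_k$ be the components of $T-c$ (so $k=\deg_T(c)\ge2$), and let $\ell_j$ be the number of leaves of $T$ in $C_j$. By a standard counting argument, since $\sum_j\ell_j=\ell$ and each $\ell_j\le\Floor{\frac{\ell}{2}}$, the leaves of $T$ can be matched into $\Floor{\frac{\ell}{2}}$ pairs with the two leaves of each pair lying in distinct components $C_i$, leaving one leaf $x$ unmatched when $\ell$ is odd. For each pair $\{u,v\}$, the $u$--$v$ path in $T$ passes through $c$ because $u$ and $v$ lie in different components of $T-c$; take all these paths, and when $\ell$ is odd also take the $x$--$c$ path. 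This produces $\Ceil{\frac{\ell}{2}}$ paths, each containing $c$.

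It remains to verify that these paths cover $E(T)$, and this is the step that needs care. By construction, each leaf of $T$ is an endpoint of exactly one of the chosen paths. Now fix an edge $e$ of $T$, let $S$ be the component of $T-e$ not containing $c$, and let $b$ be the endpoint of $e$ lying in $S$. The key point is that $S$ contains a leaf of $T$: if $b$ itself is a leaf of $T$ this is clear, and otherwise $S$ has at least two vertices, hence as a tree has a leaf $z\ne b$, and then $\deg_T(z)=\deg_S(z)=1$ since no edge of $T$ at $z$ crosses $e$. Take such a leaf $z\in S$ and let $P$ be the chosen path with endpoint $z$; its other endpoint is either $c$ or a leaf in a component of $T-c$ other than the one containing $z$, so it lies outside $S$, whence $P$ must traverse the unique edge $e$ joining $S$ to the rest of $T$. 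Hence every edge of $T$ is covered. The centroid and matching steps are routine; the only genuinely tree-specific ingredient is that every subtree hanging off an edge contains a leaf of $T$, and this is what drives the coverage argument, so it is the point I would be most careful to state precisely.
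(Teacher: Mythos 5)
Your proof is correct, but it takes a genuinely different route from the paper's. The paper argues extremally: it takes a family $\PP$ of $\Ceil{\frac{\ell}{2}}$ paths covering all leaves with \emph{maximum total size}, shows that two disjoint paths $P,Q\in\PP$ could be rerouted through the connecting path $R$ to increase total size (so the paths are pairwise intersecting), and then invokes the Helly property of subtrees of a tree to extract a common vertex; edge coverage is then delegated to \lemref{CoverEveryEdge}. You instead construct the covering directly: find a ``leaf-centroid'' $c$ with at most $\Floor{\frac{\ell}{2}}$ leaves of $T$ in each component of $T-c$, match the leaves across components, and route each path through $c$. Your approach is self-contained (it needs neither \thmref{AKG}, nor the Helly property, nor the exchange argument), is effectively algorithmic, and proves slightly more, since it exhibits an explicit common vertex; in fact it is essentially the sufficiency direction of \thmref{Centroids} specialised to the centroid, combined with the matching fact that underlies \lemref{CMG} with $d=2$. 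The paper's argument is shorter given the surrounding machinery and yields a common vertex without having to locate one in advance. All the steps you flag as needing care do hold: the balanced matching across colour classes of sizes $\ell_j\leq\Floor{\frac{\ell}{2}}$ exists, and every component $S$ of $T-e$ avoiding $c$ contains a leaf of $T$ whose chosen path must exit $S$ through $e$, which is exactly the same mechanism as the paper's \lemref{CoverEveryEdge}.
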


\begin{proof}
  Let \PP\ be a set of $\Ceil{\frac{\ell}{2}}$ paths in $T$ that cover
  every leaf in $T$, and with maximum total size. Suppose that there
  are disjoint paths $P$ and $Q$ in \PP. Let $R$ be a minimal path in
  $T$ between $P$ and $Q$. Let $v$ and $w$ be the endpoints of $R$,
  where $v$ is in $P$ and $w$ is in $Q$. Thus $P$ is the union of two
  paths $P_1$ and $P_2$ whose intersection is $v$. And $Q$ is the
  union of two paths $Q_1$ and $Q_2$ whose intersection is
  $w$. Replace $P$ and $Q$ in \PP\ by $P_1\cup R\cup Q_1$ and $P_2\cup
  R\cup Q_2$. We obtain a set of $\Ceil{\frac{\ell}{2}}$ paths with
  greater total size than \PP. This contradiction proves that the
  paths in \PP\ are pairwise intersecting. By the Helly property of
  subtrees of a tree, the paths in \PP\ have a vertex $v$ in common.
\end{proof}

\begin{lemma}
  \lemlabel{CoverEveryEdge} If \PP\ is a set of paths in a tree $T$
  that cover every leaf, and some vertex $v$ is in every path in \PP,
  then \PP\ covers every edge.
\end{lemma}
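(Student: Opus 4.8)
The plan is to show that each edge of $T$ lies in some path of $\PP$, treating one edge at a time. Fix an arbitrary edge $e=xy$ of $T$ and let $T_x,T_y$ be the two components of $T-e$, with the names chosen so that the common vertex $v$ lies in $T_x$ (this is possible, since $v$ lies in exactly one of the two components). It then suffices to exhibit a path in $\PP$ that contains $e$.

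The key observation is that $T_y$ contains a leaf of $T$. If $T_y$ is the trivial tree $\{y\}$, then $\deg_T(y)=1$, so $y$ is itself such a leaf. Otherwise $T_y$ is a tree on at least two vertices and hence has at least two leaves; since every vertex $u\neq y$ of $T_y$ satisfies $\deg_{T_y}(u)=\deg_T(u)$, at least one leaf of $T_y$ is also a leaf of $T$. In either case, fix a leaf $z\in V(T_y)$ of $T$.

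Now conclude as follows. Because $\PP$ covers every leaf, some path $P\in\PP$ contains $z$; by hypothesis $P$ also contains $v$. Since $P$ is a path and a subgraph of the tree $T$, it contains the unique $v$--$z$ path of $T$, and this path must use the edge $e$, as $v\in V(T_x)$ while $z\in V(T_y)$. Hence $e\in E(P)$, and $e$ is covered by $\PP$. As $e$ was arbitrary, $\PP$ covers every edge of $T$.

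I do not anticipate a genuine obstacle; the only point requiring a little care is the claim that a component of $T-e$ always contains a leaf of $T$, which is just the standard fact that every tree with at least two vertices has at least two leaves, applied to $T_y$ (together with the trivial case $T_y=\{y\}$).
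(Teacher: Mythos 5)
Your proof is correct and follows essentially the same route as the paper's: in both arguments the component of $T-e$ not containing $v$ must contain a leaf of $T$, and the path of $\PP$ covering that leaf is forced through $e$ because it also contains $v$. The only difference is presentational (you argue directly, the paper by contradiction) and that you spell out the existence of the leaf in $T_y$, which the paper leaves implicit.
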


\begin{proof}
  Suppose on the contrary that some edge $e$ is not covered by
  \PP. Let $T_1$ and $T_2$ be the components of $T-e$. Without loss of
  generality, $v$ is in $T_1$. Let $P$ be a path in \PP\ that covers
  some leaf of $T$ contained in $T_2$. Then $v$ is not in $P$. This
  contradiction proves that \PP\ covers every edge.
\end{proof}

We now characterise those vertices $v$ for which there is a minimal
covering by paths all containing $v$.

\begin{theorem}
  \thmlabel{Centroids} Let $T$ be a tree with $\ell$ leaves.  Let $v$
  be a vertex of $T$.  Then $T$ has a covering by
  $\Ceil{\frac{\ell}{2}}$ paths each containing $v$ if and only if
  $|V(T')\cap L(T)|\leq\Ceil{\frac{\ell}{2}}$ for every component $T'$
  of $T-v$.
\end{theorem}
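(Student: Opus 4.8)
The plan is to prove both directions. For the forward direction, suppose $T$ has a covering $\PP$ by $\Ceil{\frac{\ell}{2}}$ paths each containing $v$. Fix a component $T'$ of $T-v$, and let $u$ be the neighbour of $v$ in $T'$. A path $P\in\PP$ can contain at most one leaf of $T$ lying in $T'$: since $P$ passes through $v$, if $P$ contained two such leaves then $P$ would have to leave $T'$ (through $v$) and come back, which is impossible in a tree. Since every leaf of $T$ in $T'$ is covered by some path of $\PP$, and each path covers at most one such leaf, we get $|V(T')\cap L(T)|\leq|\PP|=\Ceil{\frac{\ell}{2}}$, as required. (A small check: a leaf of $T$ lying in $T'$ is still covered even though $v\notin T'$ — it is covered because the path through it reaches $v$, hence uses the edge $vu$.)

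For the converse, assume $|V(T')\cap L(T)|\leq\Ceil{\frac{\ell}{2}}$ for every component $T'$ of $T-v$. By \lemref{CoverEveryEdge}, it suffices to produce a set of $\Ceil{\frac{\ell}{2}}$ paths, each containing $v$, that together cover every leaf of $T$ — the lemma then upgrades this to a covering of all edges. Build a bipartite-style pairing of the leaves: I want to partition (most of) $L(T)$ into pairs so that the two leaves of each pair lie in \emph{different} components of $T-v$, because then the path joining them passes through $v$; unpaired leaves are joined to $v$ by a single pendant-type path (the unique $v$--leaf path). Each such pair or singleton contributes one path containing $v$, and these paths cover all leaves. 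So the task reduces to: given the components $T_1,\dots,T_k$ of $T-v$ with leaf-counts $\ell_1,\dots,\ell_k$ summing to $\ell$ (here $v$ itself is not a leaf, which holds because $T$ has $\geq 2$ vertices and — if $v$ were a leaf — the single component $T'=T-v$ would have all $\ell$ leaves, but $\ell>\Ceil{\ell/2}$ once $\ell\geq 3$, while $\ell\leq 2$ is a trivial special case), with $\ell_i\leq\Ceil{\ell/2}$ for each $i$, show one can pair up leaves across different parts using at most $\Ceil{\ell/2}$ pairs-or-singletons in total.

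The combinatorial heart is this pairing claim, and it is the step I expect to be the main obstacle, though it is standard. It is essentially the fact that a multiset of sizes $\ell_1,\dots,\ell_k$ with $\max_i\ell_i\leq\lceil(\sum_i\ell_i)/2\rceil$ can be matched so that matched pairs come from distinct parts, leaving at most $(\sum\ell_i)\bmod 2$ elements unmatched — proved greedily by always pairing an element of a currently-largest part with an element of some other part (the max-size condition guarantees another nonempty part exists whenever two or more elements remain and the largest part is not everything), and checking that the invariant $\max\leq\Ceil{(\text{remaining total})/2}$ is maintained. Counting: $\lfloor\ell/2\rfloor$ pairs plus at most one leftover singleton gives exactly $\Ceil{\ell/2}$ paths. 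Finally, append \lemref{CoverEveryEdge} to conclude these $\Ceil{\ell/2}$ paths cover $E(T)$, completing the converse and the proof.
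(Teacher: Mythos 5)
Your proof is correct and follows essentially the same route as the paper: both arguments reduce the problem, via \lemref{CoverEveryEdge}, to finding a matching of $\lfloor\ell/2\rfloor$ leaf-pairs in which the two leaves of each pair lie in distinct components of $T-v$ (equivalently, a near-perfect matching in the complete multipartite graph whose colour classes are the sets $V(T')\cap L(T)$). The only difference is that the paper cites Sitton's theorem (see \lemref{CMG} with $d=2$) for the fact that such a matching exists if and only if no class exceeds $\Ceil{\frac{\ell}{2}}$, whereas you prove that fact directly by a greedy pairing and give the forward direction by direct counting; both are fine. One small slip: your parenthetical claim that $v$ cannot be a leaf is wrong for $\ell=3$, since the single component $T'$ of $T-v$ then contains $\ell-1=2\leq\Ceil{\frac{\ell}{2}}$ leaves of $T$ (not all $\ell$ of them), so the hypothesis of the converse can hold with $v$ a leaf --- but that case is immediate (the two $v$--leaf paths cover every leaf and contain $v$, so \lemref{CoverEveryEdge} applies), and the rest of your argument is unaffected.
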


\begin{proof}
  Let $G$ be the graph with vertex set $L(T)$ where two leaves $x$ and
  $y$ are adjacent in $G$ if and only if $x$ and $y$ are in distinct
  components of $T-v$. Thus $G$ is a complete $d$-partite graph, where
  $d=\deg(v)$. Each colour class of $G$ consists of the leaves in
  $L(T)$ that are in a single component of $T-v$. Each pair of leaves
  in distinct components of $T-v$ are the endpoints of a path through
  $v$. In this way, each edge of $G$ corresponds to a leafy path in
  $T$. The paths in a covering of $T$ can be assumed to be leafy
  paths. Thus \lemref{CoverEveryEdge} implies that $T$ has a covering
  by $\Ceil{\frac{\ell}{2}}$ paths each containing $v$ if and only $G$
  contains a matching of $\floor{\frac{\ell}{2}}$ edges, or
  equivalently if $G$ contains $\ceil{\frac{\ell}{2}}$ disjoint
  $(\leq2)$-cliques. A result of \citet{Sitton-EJUM} (see \lemref{CMG}
  for a generalisation) implies that this property holds if and only
  if each colour class in $G$ has at most $\Ceil{\frac{\ell}{2}}$
  vertices, which is equivalent to saying that $|V(T')\cap
  L(T)|\leq\Ceil{\frac{\ell}{2}}$ for every component $T'$ of $T-v$.
\end{proof}

\begin{theorem}
  \thmlabel{CentroidPath} Let $T$ be a tree with $\ell$ leaves.  Let
  $C$ be the set of vertices $v$ in $T$ such that $T$ has a covering
  by $\Ceil{\frac{\ell}{2}}$ paths each containing $v$. Then $C$
  induces a non-empty path. Moreover, every internal vertex has degree
  $2$ in $T$, unless $\ell$ is odd, in which case $C$ may have exactly
  one internal vertex $v$ with degree exactly $3$, and $v$ is the
  endpoint of a pendant path.
\end{theorem}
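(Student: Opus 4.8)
The plan is to analyze the structure of the set $C$ using \thmref{Centroids}, which characterises $C$ as exactly those vertices $v$ for which every component of $T-v$ contains at most $\Ceil{\frac{\ell}{2}}$ leaves of $T$. So I would restate the goal purely in these terms: a vertex $v$ is in $C$ if and only if no single branch at $v$ holds more than half the leaves.

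First I would prove that $C$ is non-empty and that $C$ induces a (connected) path. For non-emptiness, a standard centroid-type argument works: start at any vertex, and while some component $T'$ of $T-v$ has more than $\Ceil{\frac{\ell}{2}}$ leaves, move $v$ one step into $T'$; the number of leaves in the ``heavy'' component strictly decreases, so the process terminates at a vertex of $C$. For connectedness/path structure: suppose $u,v\in C$ and let $P$ be the $u$--$v$ path in $T$; I would show every vertex $x$ on $P$ lies in $C$. If $x\notin C$, some component $T'$ of $T-x$ has more than $\Ceil{\frac{\ell}{2}}$ leaves. The vertices $u$ and $v$ lie in components of $T-x$ on the two ``sides'' of $x$ along $P$; at most one of these equals $T'$, so some endpoint of $P$ — say $u$ — lies outside $T'$, meaning $T'$ is contained in a single component of $T-u$, giving that component more than $\Ceil{\frac{\ell}{2}}$ leaves, contradicting $u\in C$. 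Then $C$ induces a subtree, but a subtree in which no vertex can have three branches all avoiding overload is in fact a path; more directly, if $x\in C$ had degree $\geq 3$ in the subtree induced by $C$, I would pick two neighbours $y,z\in C$ in different branches and derive that the branch at $x$ not containing $y$ or $z$, together with... — actually the cleanest route is: if three vertices of $C$ were pairwise non-collinear, their median vertex $m$ lies in $C$ (by the path property just proved applied to each pair) and has three components of $T-m$ each meeting $C$; I would then show one of these components has more than $\Ceil{\frac{\ell}{2}}$ leaves by a counting argument, contradicting $m\in C$.

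Next I would establish the degree condition on internal vertices of $C$. Let $v$ be an internal vertex of the path induced by $C$, with path-neighbours $a$ and $b$ (both in $C$). Let the components of $T-v$ be $T_a$ (containing $a$), $T_b$ (containing $b$), and possibly others $S_1,\dots,S_k$. Since $a\in C$, the component of $T-a$ containing $v$ — which contains all of $T_b,S_1,\dots,S_k$ plus $v$ — has at most $\Ceil{\frac{\ell}{2}}$ leaves; similarly using $b\in C$, the union $T_a\cup S_1\cup\dots\cup S_k$ (plus $v$) has at most $\Ceil{\frac{\ell}{2}}$ leaves. Adding these two inequalities and using that every leaf of $T$ is counted, each $S_i$ is counted twice, and $v$ is counted once (if $v$ is a leaf, which it isn't since it has degree $\geq 2$), I get $\ell + \sum_i|V(S_i)\cap L(T)| \leq 2\Ceil{\frac{\ell}{2}}$. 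When $\ell$ is even this forces $\sum_i|V(S_i)\cap L(T)| = 0$, so $k=0$ and $\deg_T(v)=2$. When $\ell$ is odd, $2\Ceil{\frac{\ell}{2}} = \ell+1$, so $\sum_i|V(S_i)\cap L(T)|\leq 1$; hence at most one extra branch $S_1$ exists and it contains exactly one leaf, so $S_1$ is a pendant path of $T$ attached at $v$, and $\deg_T(v)=3$. I should also check there is at most one such exceptional vertex on $C$: if $v$ and $v'$ were two internal vertices of $C$ each with a pendant-path branch, the inequalities applied at $v$ would force the branch at $v$ towards $v'$ (which contains $v'$'s pendant leaf) to have odd leaf-count, but that branch's leaves would number... — a short parity/counting check finishes this.

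The main obstacle I expect is the path-structure argument (ruling out a degree-$\geq 3$ vertex of $C$ within $C$), since it requires a slightly delicate counting argument rather than a one-line contradiction: one must carefully track how the leaves distribute among the $\geq 3$ branches at a putative branch vertex $m$ of $C$ and show some branch, or some side, exceeds $\Ceil{\frac{\ell}{2}}$. I would handle this by noting that at $m$, the three (or more) components partition the $\ell$ leaves, and for $m$ to lie in $C$ each is at most $\Ceil{\frac{\ell}{2}}$; but then for a neighbour of $m$ in $C$ lying in one branch, the ``rest'' side must also be at most $\Ceil{\frac{\ell}{2}}$, and summing over the directions eventually yields $2\ell \le (\text{number of branches})\cdot\Ceil{\frac{\ell}{2}}$ only when the branch count is $2$, unless $\ell$ is small — and the small cases are checked directly. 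Everything else reduces to elementary inequalities with $\Ceil{\frac{\ell}{2}}$ and careful bookkeeping of whether $\ell$ is even or odd.
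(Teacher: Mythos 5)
Your proposal is correct and takes essentially the same approach as the paper's proof: everything is reduced to the characterisation of $C$ in \thmref{Centroids}, connectedness follows because each component of $T-x$ lies inside a component of $T-u$ or of $T-w$, a branch vertex of $T[C]$ is excluded by observing that each of its three $C$-neighbours forces its branch to contain at least $\ell-\Ceil{\frac{\ell}{2}}$ leaves (your inequality $2\ell\leq 3\Ceil{\frac{\ell}{2}}$ is the same counting as the paper's $\frac{2}{3}\ell\leq\Ceil{\frac{\ell}{2}}$, and the small-$\ell$ cases you defer are glossed over in the paper as well), and the degree of an internal vertex of $C$ is controlled by the two inequalities coming from its two path-neighbours. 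Your symmetric addition of those two inequalities and your explicit centroid-descent proof of non-emptiness are minor cosmetic variations, not a genuinely different route.
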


\begin{proof}
  Let $L$ be the set of leaves in $T$.  Let $u,w\in C$. By
  \thmref{Centroids}, the number of leaves of $T$ in each component of
  $T-u$ or of $T-w$ is at most $\Ceil{\frac{\ell}{2}}$.  Let $v$ be a
  vertex on the $uw$-path in $T$. If $T'$ is a component of $T-v$,
  then $T'$ is contained in a component of $T-u$ or a component of
  $T-w$. Thus the number of leaves of $T$ in $T'$ is at most
  $\Ceil{\frac{\ell}{2}}$. By \thmref{Centroids}, $v\in C$. Hence
  $T[C]$ is connected.

  Suppose that $C$ contains a vertex $v$ with three neighbours
  $w_1,w_2,w_3$ in $C$. Let $T_i$ be the component of $T-v$ containing
  $w_i$. Without loss of generality, $|T_1\cap L|\leq |T_2\cap L|\leq
  |T_3\cap L|$. Thus $|(T_2\cup T_3)\cap L|\geq\frac23 \ell$. But
  $T_2\cup T_3$ is contained in a component of $T-w_1$, implying
  $w_1\not\in C$ by \thmref{Centroids}. This contradiction proves that
  $T[C]$ is a path.

  Now suppose that $C$ has an internal vertex $v$ such that
  $\deg_T(v)\geq3$. Let $u$ and $w$ be the neighbours of $v$ in $C$.
  Let $L_u$ be the set of leaves of $T$ in the component of $T-uv$
  that contains $u$.  Let $L_w$ be the set of leaves of $T$ in the
  component of $T-wv$ that contains $w$.  Let $L_v$ be the number of
  leaves of $T$ in the component of $T-\{uv,vw\}$ that contains
  $v$. Thus $\ell=|L_u|+|L_v|+|L_w|$. Since $\deg_T(v)\geq3$, we have
  $|L_v|\geq1$. Without loss of generality, $|L_u|\leq |L_w|$. Thus
  $|L_v|+|L_w|=\ell-|L_u|\geq\ell-|L_w|$, implying $\ell\leq
  2|L_w|+|L_v|\leq 2|L_w|+2|L_v|-1$. Hence $\half(\ell+1)\leq |L_w\cup
  L_v|$. Since $L_v\cup L_w$ is contained in a component of $T-u$, by
  \thmref{Centroids}, $\Ceil{\frac{\ell}{2}}\geq |L_w\cup L_v|$, which
  is a contradiction if $\ell$ is even. If $\ell$ is odd then $L_v=1$,
  and by a similar agument, $v$ is the only internal vertex of $C$
  with degree at least $3$ in $T$, and $v$ is the endpoint of a
  pendant path.
\end{proof}

\thmref{CentroidPath} says that the set of vertices $v$ for which $T$
has a minimal covering by paths each containing $v$ is somewhat like
the centroid of $T$, where we measure the `weight' of a component of
$T-v$ by the number of leaves in it rather than the number of
vertices.

Finally in this section, we consider the problem of covering a given
tree with a small number of subtrees, each with at most $d$
leaves. Covering by paths corresponds to the $d=2$ case. The next
result thus generalises \thmref{AKG} (and with a completely different
proof).

\begin{theorem}
  \thmlabel{FewLeaves} For every integer $d\geq2$ and for every tree
  $T$ with $\ell$ leaves, the minimum number of subtrees, each with at
  most $d$ leaves, that cover $T$ is $\Ceil{\frac{\ell}{d}}$.
\end{theorem}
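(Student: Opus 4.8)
My plan uses two ingredients: a one‑line lower bound, and an explicit construction for the upper bound that first reduces to the case $d\mid\ell$ and then exploits a leaf‑centroid. For the \emph{lower bound}: if a subtree $S$ of $T$ contains an edge incident to a leaf $u$ of $T$, then $\deg_S(u)=1$, so $u$ is a leaf of $S$; hence a subtree with at most $d$ leaves contains at most $d$ leaves of $T$. In any covering every leaf of $T$ lies in some member (its incident leaf edge must be covered), so a covering has at least $\ell/d$, and therefore at least $\Ceil{\frac{\ell}{d}}$, members. For the \emph{reduction}: if $\ell\le d$ then $\set{T}$ is a covering and $\Ceil{\frac{\ell}{d}}=1$, so assume $\ell\ge d+1$, whence $T$ has a vertex of degree $\ge 3$ and in particular a non‑leaf vertex. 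If $d\nmid\ell$, add $s:=d-(\ell\bmod d)\in\set{1,\dots,d-1}$ new pendant edges at a single non‑leaf vertex, obtaining a tree $T^{+}\supseteq T$ with $\card{L(T^{+})}=\ell+s\equiv 0\pmod d$. It then suffices to cover $T^{+}$ by $\tfrac{\ell+s}{d}=\Ceil{\frac{\ell}{d}}$ subtrees with at most $d$ leaves each, because deleting the new (pendant) edges from such a subtree leaves a connected subtree of $T$ whose leaf count does not increase (a pendant‑edge deletion changes the number of leaves by $-1$ or $0$), hence is still at most $d$, and these subtrees still cover $E(T)$; since $s<d$, no part built below consists only of new leaves, so each resulting subtree is non‑trivial. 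Thus we may assume $d\mid\ell$ and write $\ell=dk$.

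For the \emph{construction}, take a leaf‑centroid $v$: a vertex such that every component of $T-v$ contains at most $\ell/2$ leaves of $T$ (the standard centroid for the weighting with weight $1$ on each leaf; since $\ell\ge 3$ this $v$ is not a leaf). Arrange the leaves of $T$ around a cycle so that the leaves belonging to a common component of $T-v$ form a contiguous block; the blocks then have sizes summing to $\ell$, each at most $\ell/2$. Labelling the leaves $1,2,\dots,\ell$ in this cyclic order, put, for $i=1,\dots,k$,
$$L_i:=\SET{i,\ i+k,\ i+2k,\ \dots,\ i+(d-1)k}$$
(indices modulo $\ell$); these $k$ sets partition $L(T)$ into parts of size $d$. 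The $d$ members of $L_i$ are equally spaced around the $\ell$‑cycle with spacing $k$, so any contiguous arc containing all of them has length at least $(d-1)k+1>\ell/2$; hence no block contains $L_i$, i.e.\ $L_i$ has leaves in at least two components of $T-v$.

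To \emph{verify} the covering, let $S_i:=T[L_i]$ be the minimal subtree of $T$ containing $L_i$. Since the leaves of a minimal subtree spanning a set of leaves of $T$ are exactly that set, each $S_i$ has exactly $d$ leaves. Since $L_i$ meets two components of $T-v$, the path in $T$ between two such leaves, and hence $S_i$, passes through $v$. Finally $S_1,\dots,S_k$ collectively contain every leaf of $T$ and all contain $v$, so they cover every edge: were some edge $e$ uncovered, then $v$ and some leaf $u\in L_i$ would lie in different components of $T-e$, yet $v\in V(S_i)$ forces $e\in E(S_i)$, a contradiction (this is exactly the mechanism of \lemref{CoverEveryEdge}). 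Hence $\set{S_1,\dots,S_k}$ covers $T$ with $k=\ell/d$ subtrees of $d$ leaves each, and unwinding the pendant‑edge reduction gives $\Ceil{\frac{\ell}{d}}$ in general, matching the lower bound.

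The step I expect to be the crux is arranging that \emph{every} part straddles the centroid simultaneously: a partition of the leaves into consecutive blocks would leave edges deep inside $T$ uncovered, which is why the equally‑spaced choice of the $L_i$ is needed. The point that makes everything work is the interaction of two facts — that an arc capturing all $d$ equally‑spaced points of $L_i$ must be long ($(d-1)k+1$), and that the centroid keeps every block short ($\le\ell/2$) — and verifying this numerical inequality, together with checking that the bound of $d$ leaves is preserved under the pendant‑edge reduction, is where the care lies.
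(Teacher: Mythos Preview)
Your proof is correct, and it takes a genuinely different route from the paper's. The paper argues by induction on $\ell$: it suppresses degree-$2$ vertices, lets $X$ be the set of branch vertices adjacent to a leaf, and splits into the cases $|X|\ge d$ (peel off $d$ leaves with distinct neighbours in $X$, induct on the remaining tree with $\ell-d$ leaves, and add one subtree spanning the peeled leaves) and $|X|<d$ (one subtree $T[L_0]$ already covers all non-leaf edges, and the remaining leaves are partitioned arbitrarily). Your argument instead builds the whole covering in one shot: a leaf-weighted centroid $v$ guarantees that each component of $T-v$ carries at most $\ell/2$ leaves, and the equally-spaced cyclic selection of the $L_i$ forces each $T[L_i]$ to straddle $v$; coverage then follows from the same mechanism as \lemref{CoverEveryEdge}. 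The paper's proof is a shorter elementary induction with no auxiliary constructions, while yours yields the additional structural conclusion that the $\Ceil{\ell/d}$ covering subtrees can be chosen to share a common vertex --- exactly the analogue for general $d$ of what \thmref{Centroids} gives for $d=2$, and an instance of the phenomenon in \lemref{Growing}. Your pendant-edge reduction to $d\mid\ell$ is sound (the leaf count of each subtree can only drop when the added edges are removed, and connectivity is preserved since the removed edges are pendant), though strictly speaking it is harmless if some $S_i'$ becomes trivial, since trivial parts may simply be discarded from a covering.
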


\begin{proof} The lower bound is immediate. We prove the upper bound
  by induction on $\ell$. Clearly we can assume that $T$ has no vertex
  of degree $2$. For $S\subseteq L(T)$, let $T[S]$ denote the subtree
  of $T$ consisting of the union of all leafy paths in $T$ whose
  endpoints are both in $S$. Note that $T[S]$ has $|S|$ leaves.  Let
  $X$ be the set of vertices of $T$ that have degree at least $3$ and
  are adjacent to at least one leaf.

  First suppose that $|X|\geq d$. For each of $d$ vertices $x\in X$,
  choose one leaf incident to $x$. We obtain a set $L_0$ of $d$ leaves
  of $T$, such that no two vertices in $L_0$ have a common neighbour
  (in $X$). Since vertices in $X$ have degree at least $3$, $T-L_0$ is
  a tree with $\ell-d$ leaves. By induction, there is a covering of
  $T-L_0$ by $\Ceil{\frac{\ell}{d}}-1$ subtrees, each with at most $d$
  leaves. With $T[L_0]$, we obtain the desired covering of $T$ (since
  every edge in $T-V(T-L_0)$ is adjacent to a vertex in $L_0$, and is
  thus in $T[L_0]$).

  Now assume that $|X|\leq d$. If $\ell<d$, then the result is
  trivial. Otherwise $\ell\geq d$. Let $L_0$ be a set of $d$ leaves,
  such that each vertex in $X$ is adjacent to at least one leaf in
  $L_0$. Since $T[L[T]]=T$ and every leaf has a neighbour in common
  with some leaf in $L_0$, every non-leaf edge of $T$ is in
  $T[L_0]$. Arbitrarily partition the $\ell-d$ leaves in $S\setminus
  L_0$ into sets $\{L_i:1\leq i\leq \Ceil{\frac{\ell}{d}}-1\}$ such
  that each $|L_i|\leq d$. Hence $\{T[L_i]:0\leq i\leq
  \Ceil{\frac{\ell}{d}}-1\}$ is the desired covering of
  $T$. \end{proof}

\section{An Algorithm for Covering Trees}
\seclabel{TreeCovering}


This section describes a polynomial time algorithm to determine a
minimum covering of a tree $T$ by degree-$d$ subtrees. Since a subtree
is degree-2 if and only if it is a path, the results in this section
with $d=2$ generalise some of the results from \secref{Paths}.

It will be convenient to consider the following more general
scenario. Let $G$ be a connected graph.  A \emph{binding function} of
$G$ is a function $f:V(G)\rightarrow\{2,3,4,\dots\}$. A subgraph $X$
of $G$ is \emph{$f$-bound} if $\deg(v)\leq f(v)$ for every vertex $v$
of $X$. A covering \C\ of $G$ is \emph{degree-$f$} if every subgraph
$X\in\C$ is $f$-bound. For an integer $d\geq2$, a \emph{$d$-covering}
of $G$ is a degree-$f$ covering of $G$, where $f(v):=d$ for each
vertex $v$ of $G$. Let $\mincover[f](G)$ be the minimum cardinality of
a degree-$f$ covering of $G$. An $f$-bound subgraph $X$ of $G$ is
\emph{$f$-maximal} if no edge of $G-E(X)$ can be added to $X$ to
obtain a new $f$-bound subgraph.

This section describes a polynomial time algorithm to determine
$\mincover[f](T)$ and the corresponding degree-$f$ covering for any
given tree $T$ and binding function $f$. Observe that a subtree $X$ of
$T$ is $f$-maximal if and only if $\deg_X(v)=\minm{f(v),\deg_G(v)}$
for every vertex $v$ of $X$. In particular, $v\in V(X)$ is a leaf of
$X$ if and only if $v$ is a leaf of $T$.

\begin{lemma}
  \lemlabel{Split} Let $v$ be a vertex of a non-trivial connected
  graph $G$. Then $G$ contains connected subgraphs $G_1$ and $G_2$
  such that $G_1\cup G_2=G$ and $V(G_1)\cap V(G_2)=\{v\}$ and
  $\deg_{G_1}(v)\leq\max\{\deg_{G}(v)-1,1\}$ and
  $\deg_{G_2}(v)\leq\max\{\deg_G(v)-1,1\}$.
\end{lemma}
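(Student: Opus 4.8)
The plan is to build $G_1$ and $G_2$ from the ``branches'' of $G$ at $v$. First I would dispose of the degenerate case $\deg_G(v)=1$: here $\max\{\deg_G(v)-1,1\}=1$, so we may simply take $G_1:=G$ and let $G_2$ be the trivial subgraph on the single vertex $v$. Then $\deg_{G_1}(v)=1$ and $\deg_{G_2}(v)=0$, both at most $1$, while $G_1\cup G_2=G$ and $V(G_1)\cap V(G_2)=\{v\}$, as required.

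So assume $k:=\deg_G(v)\geq2$. I would look at the components of $G-v$, say $C_1,\dots,C_m$, and for each $j$ let $H_j$ be the subgraph of $G$ obtained from $C_j$ by adding $v$ together with all edges joining $v$ to $C_j$. Each $H_j$ is connected (it contains the connected graph $C_j$ and at least one edge from $v$ into it); moreover $H_1\cup\dots\cup H_m=G$, and $V(H_i)\cap V(H_j)=\{v\}$ whenever $i\neq j$. In the situation of this section $G$ is (a subtree of) a tree, so $G-v$ has exactly $k$ components and each $H_j$ uses exactly one edge at $v$; in general, writing $d_j:=\deg_{H_j}(v)\geq1$ one has $\sum_j d_j=k$ and $m\geq2$ once $v$ is a cut vertex. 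I would then split the $H_j$'s into two non-empty bundles, for instance $G_1:=H_1$ and $G_2:=H_2\cup\dots\cup H_m$. Both bundles are connected since every $H_j$ contains $v$; their union is $G$; their intersection is exactly $\{v\}$; and $\deg_{G_1}(v)=d_1\leq k-1$ (because $d_2\geq1$) while $\deg_{G_2}(v)=k-d_1\leq k-1$ (because $d_1\geq1$), so both are at most $\max\{k-1,1\}$.

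The only genuine points to be careful about are bookkeeping ones rather than deep ones: one must check that neither bundle ends up using all $k$ edges incident to $v$, which is exactly why the split is into two non-empty bundles and why the bounds $d_j\geq1$ are invoked, and one must make sure the $\max\{\cdot,1\}$ correctly absorbs the $\deg_G(v)=1$ case treated first. I expect the main (minor) obstacle to be organising the branch/component decomposition of $G-v$ so that the connectivity of $G_1$ and $G_2$ and the identity $V(G_1)\cap V(G_2)=\{v\}$ are both transparent; once the $H_j$'s are set up as above, every subsequent claim is immediate.
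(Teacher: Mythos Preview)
Your argument has a genuine gap: it only works when $v$ is a cut vertex (or has degree $1$). You set up the branches $H_1,\dots,H_m$ as the components of $G-v$ reattached to $v$, note that $m\geq2$ ``once $v$ is a cut vertex'', and then split into two non-empty bundles. But the lemma is stated for an arbitrary non-trivial connected graph $G$, and when $v$ is \emph{not} a cut vertex (for instance $G$ a cycle, or any $2$-connected graph) you have $m=1$, so there is no second bundle and the construction collapses. Your aside that ``in the situation of this section $G$ is a subtree of a tree'' does not rescue this: the lemma is invoked in the proof of \lemref{GrowingGraph}, which is stated for general connected graphs, and the subgraphs $U_i$ to which it is applied there need not be trees.

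The paper's proof takes a different dichotomy that sidesteps this issue: it asks whether some edge $vw$ incident to $v$ is a bridge. If so, it splits along that bridge (essentially your branch argument in the special case where one branch is a single component). If no edge at $v$ is a bridge, it takes $G_1:=G[\{v,w\}]$ for any neighbour $w$ and $G_2:=G-vw$; the point is that $G_2$ remains connected precisely because $vw$ is not a bridge, and $\deg_{G_2}(v)=\deg_G(v)-1$. Your component-of-$G-v$ decomposition cannot see this second case at all, since it produces only one piece. To repair your approach you would need to add a separate argument for the non-cut-vertex case, and peeling off a single non-bridge edge (as the paper does) is the natural fix.
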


\begin{proof}
  First suppose there is a bridge edge $vw$ incident to $v$. Let $A$
  be the connected component of $G-vw$ that contains $w$. Then
  $G_1:=G[V(A)\cup\{v\}]$ and $G_2:=G-A$ satisfy the claim.  Now
  assume that no edge incident to $v$ is a bridge. Let $vw$ be an edge
  incident to $v$. Then $G_1:=G[\{v,w\}]$ and $G_2:=G-vw$ satisfy the
  claim.
\end{proof}

\begin{lemma}
  \lemlabel{GrowingGraph} Let $f$ be a binding function of a connected
  graph $G$. Let $s:=\mincover[f](G)$. Then $G$ has a degree-$f$
  covering by $s$ $f$-maximal subgraphs that are pairwise
  intersecting.
\end{lemma}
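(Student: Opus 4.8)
The plan is to start from an arbitrary minimum degree-$f$ covering of $G$ and transform it into one with the two desired extra properties ($f$-maximality of every part, and pairwise intersection) without increasing its cardinality. First I would handle $f$-maximality: given any degree-$f$ covering $\C$ with $|\C|=s$, each $f$-bound subgraph $X\in\C$ is contained in some $f$-maximal $f$-bound subgraph $X'$ (greedily add edges of $G-E(X)$ while the result stays $f$-bound; this terminates since $G$ is finite). Replacing each $X$ by such an $X'$ yields a degree-$f$ covering of the same size in which every part is $f$-maximal. So from now on I may assume $\C$ consists of $s$ $f$-maximal subgraphs, and it remains to make them pairwise intersecting.

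For the pairwise-intersection step I would adapt the edge-swapping idea already used in the proof of the proposition preceding \lemref{CoverEveryEdge} (and in \thmref{SizePathCovering}), now invoking \lemref{Split} to do the ``splitting at a vertex'' that for paths was done by hand. Among all degree-$f$ coverings of $G$ by $s$ $f$-maximal subgraphs, choose one, call it $\C$, maximising the total number of edges $\sum_{X\in\C}\|X\|$. Suppose for contradiction that two parts $G'$ and $G''$ of $\C$ are vertex-disjoint. Pick a shortest path $R$ in $G$ from $G'$ to $G''$, with endpoints $v\in V(G')$ and $w\in V(G'')$; by minimality of $R$, its interior vertices and the edges of $R$ lie outside $V(G')\cup V(G'')$ except at $v$ and $w$. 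Apply \lemref{Split} to $G'$ at $v$ to get $G'_1\cup G'_2=G'$ with $V(G'_1)\cap V(G'_2)=\{v\}$ and $\deg_{G'_i}(v)\le\max\{\deg_{G'}(v)-1,1\}$, and similarly split $G''$ at $w$ into $G''_1,G''_2$. Then replace $G'$ and $G''$ in $\C$ by the two subgraphs $H_1:=G'_1\cup R\cup G''_1$ and $H_2:=G'_2\cup R\cup G''_2$. Each $H_j$ is connected (two connected pieces glued along $R$ at $v$ and $w$), covers $E(G')\cup E(G'')$ together with the new edges of $R$, and is $f$-bound: at every vertex other than $v,w$ the degree is unchanged from $G'$, $G''$, or is at most $2\le f$ on the interior of $R$; at $v$ the degree is at most $\max\{\deg_{G'}(v)-1,1\}+1\le\deg_{G'}(v)\le f(v)$ when $\deg_{G'}(v)\ge2$, and when $\deg_{G'}(v)=1$ it is at most $1+1=2\le f(v)$; symmetrically at $w$. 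The new collection still covers $G$ and has $s$ parts, but has strictly more edges in total (it gained the $\ge1$ edges of $R$, counted with multiplicity, and lost none), so after re-extending its parts to $f$-maximal ones — which only adds edges — we contradict the maximality of $\C$. Hence the parts of $\C$ are pairwise intersecting, as required.

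I expect the main obstacle to be the bookkeeping in the swap: one must be careful that $R$ genuinely contains an edge not already present in $G'\cup G''$ (true because $R$ is a path between two \emph{disjoint} vertex sets, so $\|R\|\ge1$ and its edges, being incident to interior vertices or to the endpoints inside neither part, are new), and that the re-maximalisation step at the end never decreases the edge count and never destroys the size bound — both clear, since adding edges to $f$-bound subgraphs keeps the covering a degree-$f$ covering of the same cardinality. A secondary subtlety is the degree check at $v$ and $w$ in the two edge cases of \lemref{Split} ($\deg_{G'}(v)=1$ versus $\ge2$), which the bound $\max\{\deg_{G'}(v)-1,1\}$ was precisely designed to absorb; this is why \lemref{Split} is phrased as it is. Finally, the Helly property of subtrees is \emph{not} needed here (that was for the paths-through-a-common-vertex statement); pairwise intersection is all that is claimed, so no further argument is required once the swap contradiction is established.
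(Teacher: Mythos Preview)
Your proposal is correct and follows essentially the same approach as the paper: choose an extremal covering (maximising total edge count), then use \lemref{Split} at both endpoints of a shortest connecting path to swap two disjoint parts for two overlapping ones with strictly more edges. The only difference is cosmetic: the paper maximises $\sum_i\|U_i\|$ over \emph{all} degree-$f$ coverings of size $s$ from the outset, which makes every part $f$-maximal automatically and avoids your separate first step and the re-maximalisation after the swap.
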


\begin{proof}
  Let $\{U_1,\dots,U_s\}$ be a degree-$f$ covering such that
  $\sum_i|E(U_i)|$ is maximum. Then each $U_i$ is non-trivial and $f$-maximal.
  Suppose on the contrary that $V(U_i)\cap V(U_j)=\emptyset$ for some
  $i,j$. Let $P$ be a shortest path between $U_i$ and $U_j$ in $G$,
  where $V(P)\cap V(U_i)=\{v\}$ and $V(P)\cap V(U_j)=\{w\}$.

  By \lemref{Split}, $U_i$ contains connected subgraphs $A_1$ and
  $A_2$ such that $A_1\cup A_2=U_i$ and $V(A_1)\cap V(A_2)=\{v\}$, and
  $\deg_{A_1}(v)\leq\max\{\deg_{U_i}(v)-1,1\}$ and
  $\deg_{A_2}(v)\leq\max\{\deg_{U_i}(v)-1,1\}$.  Similarly, $U_j$
  contains connected subgraphs $B_1$ and $B_2$ such that $B_1\cup
  B_2=U_j$ and $V(B_1)\cap V(B_2)=\{w\}$, and
  $\deg_{B_1}(v)\leq\max\{\deg_{U_j}(v)-1,1\}$ and
  $\deg_{B_2}(v)\leq\max\{\deg_{U_j}(v)-1,1\}$.

  Observe that $A_1\cup P\cup B_1$ and $A_2\cup P\cup B_2$ are
$f$-bound subgraphs of $G$ (since the degree of $v$ in
  each subgraph is at most $\max\{\deg_{U_i}(v),2\}\leq f(v)$, the
  degree of $w$ in each subgraph is at most
  $\max\{\deg_{U_j}(w),2\}\leq f(w)$, and for each internal vertex $z$
  in $P$, the degree of $z$ in each subgraph is at most $2\leq f(z)$).
  Since $A_1\cup A_2=U_i$ and $B_1\cup B_2=U_j$, replacing $U_i$ and
  $U_j$ by $A_1\cup P\cup B_1$ and $A_2\cup P\cup B_2$ gives a
  degree-$f$ covering of $G$ with greater total size than
  $U_1,\dots,U_s$. This contradiction proves that $V(U_i)\cap
  V(U_j)\neq\emptyset$ for $i,j\in\{1,\dots,r\}$.
\end{proof}

By the Helly property of subtrees of a tree, \lemref{GrowingGraph}
implies:

 \begin{lemma}
   \lemlabel{Growing} Let $f$ be a binding function of a tree $T$. Let
   $s:=\mincover[f](T)$. Then $G$ has a degree-$f$ covering by $s$
   $f$-maximal subtrees that have a vertex in common.
 \end{lemma}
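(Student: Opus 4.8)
The plan is to derive this immediately from \lemref{GrowingGraph} by invoking the Helly property of subtrees of a tree. First I would apply \lemref{GrowingGraph} with $G:=T$: since $T$ is a connected graph and $f$ is a binding function of $T$, the lemma produces a degree-$f$ covering $\{U_1,\dots,U_s\}$ of $T$ by $s=\mincover[f](T)$ subgraphs that are $f$-maximal and pairwise intersecting, meaning $V(U_i)\cap V(U_j)\neq\emptyset$ for all $i,j\in\{1,\dots,s\}$. Each $U_i$ is a connected subgraph of the tree $T$, hence is itself a subtree of $T$.

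It then remains to upgrade ``pairwise intersecting'' to ``having a common vertex''. This is exactly the Helly property for subtrees of a tree: any finite family of subtrees of a tree that is pairwise intersecting has a vertex belonging to all of them. I would either cite this as standard, or, if a self-contained argument is preferred, sketch the induction on $s$. The base case $s\leq 2$ is trivial. For the inductive step, the key point is the case $s=3$: given pairwise-intersecting subtrees $A,B,C$ of $T$, pick $a\in V(A)\cap V(B)$, $b\in V(B)\cap V(C)$, $c\in V(C)\cap V(A)$; in a tree the three paths joining $a,b,c$ meet in a single vertex $m$ (the median of $a,b,c$), and $m$ lies on the $a$-$c$ path so $m\in V(A)$, on the $a$-$b$ path so $m\in V(B)$, and on the $b$-$c$ path so $m\in V(C)$. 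For general $s$, replace $U_1$ and $U_2$ by the subtree $T[V(U_1)\cap V(U_2)]$, which is still a subtree (the intersection of two subtrees of a tree is connected, possibly after taking the induced subgraph on the common vertices) and which intersects every remaining $U_i$ by the $s=3$ case applied to $U_1,U_2,U_i$; induction on the reduced family of $s-1$ subtrees then yields a common vertex of all of them, which lies in $V(U_1)\cap V(U_2)$ and hence in every $U_i$.

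Applying this to $U_1,\dots,U_s$ shows these $s$ $f$-maximal subtrees share a common vertex, and since they already form a degree-$f$ covering of $T$, this is the required covering. The only thing requiring any care is the Helly-property step, and even that is routine given the median structure of trees; no genuine obstacle is expected.
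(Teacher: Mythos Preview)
Your proposal is correct and follows exactly the paper's approach: the paper derives \lemref{Growing} in one line from \lemref{GrowingGraph} together with the Helly property of subtrees of a tree. Your optional sketch of the Helly argument is more than the paper provides, but the core reasoning is identical.
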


 \lemref{Growing} implies that to find a minimum degree-$f$ covering
 of a tree, it suffices to consider degree-$f$ coverings that have a
 common vertex in every subtree. It is therefore convenient to
 consider the following more general covering problem. Recall that in
 a rooted tree $T$ the edges are oriented away from the root vertex
 $r$. A \emph{rooted covering} of $T$ is a covering \C\ of $T$ such
 that $r$ is in every subtree in \C. A \emph{binding function} of $T$
 is a function $f:V(T)\rightarrow\mathbb{Z}^+$. A rooted covering
 $\mathcal{C}$ of $T$ is \emph{outdegree-$f$} if $\outdeg_X(v)\leq
 f(v)$ for every vertex $v$ in every subtree $X\in\mathcal{C}$. For an
 integer $d\geq1$, a \emph{degree-$d$} rooted covering of $T$ is an
 outdegree-$f$ rooted covering of $T$, where $f(v):=d$ for each vertex
 $v$ of $T$. Let $\rmc[f](T)$ be the minimum cardinality of an
 outdegree-$f$ rooted covering of $T$. For a vertex $r$ of an unrooted
 tree $T$, let $\rmc[f](T,r)$ be the minimum cardinality of an
 outdegree-$f$ rooted covering of the rooted tree $(T,r)$. We now show
 that the problem of determining a covering of an unrooted tree can be
 reduced to the case of rooted trees.


\begin{lemma}
  \lemlabel{NonRootedMinCover} Let $f$ be a binding function of a
  (non-rooted) tree $T$. Then $$\mincover[f](T)\;=\;\min_{r\in
    V(T)}\rmc[g](T,r),$$ where $g$ is the binding function of $(T,r)$
  defined by $g(r):=f(r)$ and $g(x):=f(x)-1$ for every vertex $x$ of
  $T-r$.
\end{lemma}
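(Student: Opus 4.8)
The plan is to prove the identity by establishing inequalities in both directions, with \lemref{Growing} doing the heavy lifting for the nontrivial direction. For the inequality $\mincover[f](T)\leq\min_r\rmc[g](T,r)$, I would take any vertex $r$ and an optimal outdegree-$g$ rooted covering $\mathcal{C}$ of $(T,r)$. Each subtree $X\in\mathcal{C}$ contains $r$ and satisfies $\outdeg_X(v)\leq g(v)$ for all $v$. Viewing $X$ as an unrooted subtree of $T$, its degree at the root is $\outdeg_X(r)\leq g(r)=f(r)$, and at any other vertex $x$ it is $\indeg_X(x)+\outdeg_X(x)=1+\outdeg_X(x)\leq 1+g(x)=f(x)$, since $x$ has exactly one incoming edge in any subtree of $(T,r)$ that contains it (the edge from its parent, which must be present because $X$ is connected and contains $r$). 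Hence $\mathcal{C}$ is a degree-$f$ covering of the unrooted tree $T$, giving $\mincover[f](T)\leq|\mathcal{C}|=\rmc[g](T,r)$; taking the minimum over $r$ finishes this direction.

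For the reverse inequality, I would invoke \lemref{Growing}: let $s:=\mincover[f](T)$, and obtain a degree-$f$ covering $\{X_1,\dots,X_s\}$ of $T$ by $f$-maximal subtrees sharing a common vertex $r$. Root $T$ at this $r$. Now for each $i$, in the rooted tree $(T,r)$ the subtree $X_i$ contains $r$, so $\outdeg_{X_i}(r)=\deg_{X_i}(r)\leq f(r)=g(r)$, and for every other vertex $x\in V(X_i)$ we have $\outdeg_{X_i}(x)=\deg_{X_i}(x)-1\leq f(x)-1=g(x)$. Thus $\{X_1,\dots,X_s\}$ is an outdegree-$g$ rooted covering of $(T,r)$, so $\rmc[g](T,r)\leq s=\mincover[f](T)$, and a fortiori $\min_{r'}\rmc[g](T,r')\leq\mincover[f](T)$. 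Combining the two inequalities gives the claimed equality.

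The only subtle point — and the step I would be most careful about — is checking that $g$ is a legitimate binding function, i.e.\ that $g(x)=f(x)-1\geq 1$ (so that $g$ maps into $\mathbb{Z}^+$) for every non-root vertex $x$. Since $f$ is a binding function of the unrooted tree, $f(x)\geq 2$ by definition, so $g(x)\geq 1$, which is exactly the positivity required for the rooted notion. I would state this explicitly. Everything else is a routine degree bookkeeping argument, turning on the single observation that a subtree of a rooted tree containing the root has indegree exactly $1$ at each non-root vertex it contains, so that outdegree and degree differ by exactly $1$ there.
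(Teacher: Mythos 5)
Your proposal is correct and follows essentially the same route as the paper: \lemref{Growing} supplies a degree-$f$ covering with a common vertex $r$, which after rooting at $r$ becomes an outdegree-$g$ rooted covering, and the reverse direction is the same indegree-$1$ bookkeeping showing a rooted $g$-covering is already a degree-$f$ covering. Your extra remark that $g(x)=f(x)-1\geq1$ is a legitimate binding function for the rooted problem is a small point of rigor the paper leaves implicit.
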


\begin{proof}
  First we prove the lower bound on $\mincover[f](T)$. By
  \lemref{Growing} there is a degree-$f$ covering \C\ of $T$ with some
  vertex $r$ in every subtree of \C, and
  $|\C|=\mincover[f](T)$. Consider a subtree $X\in\C$. Every vertex
  $v\neq r$ in the rooted subtree $(X,r)$ has outdegree at most
  $f(v)-1$ (since the incoming edge incident to $v$ must in $X$). Thus
  $v$ has outdegree at most $g(v)$ in $(X,r)$. The outdegree of $r$ in
  $(X,r)$ equals the degree of $r$ in $X$, which is at most
  $f(r)=g(r)$. Hence \C\ is a rooted $g$-covering of $(T,r)$,
  implying $$\mincover[f](T)=|\C|\geq\min_{r\in
    V(T)}\rmc[g](T,r)\enspace.$$

  Now we prove the upper bound on $\mincover[f](T)$. Let $r$ be a
  vertex in $T$ that minimises $\rmc[g](T,r)$. Thus there is a rooted
  $g$-covering \C\ of $(T,r)$. Consider a rooted subtree $(X,r)$ of
  $\C$. Then $r$ is in $X$, and $\deg_X(r)=\outdeg_{(X,r)}(r)$. For
  every vertex $v$ of $X-r$, we have
  $\deg_X(v)=1+\outdeg_{(X,r)}(v)$. It follows that \C\ is a
  degree-$f$ covering of $T$. Hence
$$\mincover[f](T)\;\leq\;|\C|\;=\;\min_{r\in V(T)}\rmc[g](T,r)\enspace,$$ as desired.
\end{proof}

The next lemma determines $\rmc[f](T)$ precisely.

\begin{lemma}
  \lemlabel{RootedMinCover} Let $f$ be a binding function of a rooted
  tree $(T,r)$.  Let $v_1,\dots,v_{\deg(r)}$ be the neighbours of $r$
  in $T$.  For $i\in\{1,\dots,\deg(r)\}$, let $T_i$ be the component
  subtree of $T-r$ that contains $v_i$.  Let $c_i:=\rmc[f](T_i,v_i)$,
  where $f$ is restricted to $V(T_i)$.  Then $$\rmc[f](T,r)
  =\MAXM{\max_{1\leq
      i\leq\deg(r)}c_i,\CEIL{\frac{1}{f(r)}\sum_{i=1}^{\deg(r)}c_i}}
  \enspace.$$
\end{lemma}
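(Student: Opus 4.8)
The plan is to prove the formula by showing each direction of the extremal characterization separately, using induction on the size of $T$ with the recursion already built into the statement. Set $M:=\max\{\max_i c_i,\ \lceil\frac{1}{f(r)}\sum_i c_i\rceil\}$. First I would establish the lower bound $\rmc[f](T,r)\ge M$. Given any outdegree-$f$ rooted covering $\C$ of $(T,r)$, each subtree $X\in\C$ contains $r$, so for each $i$ the restriction $X\cap(T_i\cup\{r\})$, re-rooted at $v_i$, is either empty (if $X$ does not reach into $T_i$) or an outdegree-$f$ rooted covering-member of $(T_i,v_i)$; the subtrees that do reach into $T_i$ must collectively cover $T_i$, so at least $c_i$ members of $\C$ enter $T_i$, giving $|\C|\ge c_i$ for every $i$. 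For the ceiling term, observe that $r$ has outdegree at most $f(r)$ in each $X$, so each member of $\C$ enters at most $f(r)$ of the subtrees $T_1,\dots,T_{\deg(r)}$; since the $i$-th subtree must be entered by at least $c_i$ members, a double-counting argument over incidences (member, $i$) with $X$ entering $T_i$ gives $f(r)\cdot|\C|\ge\sum_i c_i$, hence $|\C|\ge\lceil\frac{1}{f(r)}\sum_i c_i\rceil$. Combining, $\rmc[f](T,r)\ge M$.

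For the upper bound I would construct a covering of size exactly $M$. For each $i$, fix an optimal outdegree-$f$ rooted covering $\C_i$ of $(T_i,v_i)$ with $|\C_i|=c_i$, and pad it out with empty members so that we think of it as an ordered list of exactly $M$ ``slots'', at most $c_i$ of which are nonempty — so slot $M$ and beyond are unused, and at most $c_i\le M$ slots per $i$ are occupied. Now I want to assign to each nonempty subtree in $\C_i$ one of the $M$ output covering members so that no output member receives more than $f(r)$ of the $T_i$-pieces (one per $i$); then each output member, taken as the union of the (at most $f(r)$) assigned pieces together with the edges $rv_i$ to those pieces' roots, is a rooted subtree with $\outdeg(r)\le f(r)$ and all other out-degrees inherited from the $\C_i$, hence outdegree-$f$; and together they cover all of $T$. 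The existence of such an assignment is exactly a bipartite edge-colouring / scheduling statement: we have $\deg(r)$ groups, group $i$ of size $c_i$, total $\sum_i c_i$ items, to be packed into $M$ bins each of capacity $f(r)$, with the extra constraint that no bin gets two items from the same group — and $M\ge\max_i c_i$ guarantees that within-group we can spread one per bin, while $M\ge\lceil\frac{1}{f(r)}\sum_i c_i\rceil$ guarantees the total capacity $f(r)M\ge\sum_i c_i$ suffices. A clean way to realize this is a greedy/round-robin assignment: list the bins cyclically and, processing the groups one at a time, place the $c_i$ items of group $i$ into $c_i$ consecutive bins (cyclically) starting where the previous group left off; since each group uses at most $M$ distinct bins this never doubles within a group, and since the total is at most $f(r)M$ and the placement is as ``balanced'' as possible, no bin exceeds capacity $f(r)$.

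The main obstacle I expect is this last packing/assignment step: making rigorous that the round-robin placement simultaneously respects the per-group-distinctness constraint and the per-bin capacity $f(r)$. The distinctness part is easy ($c_i\le M$ consecutive cyclic positions are distinct). The capacity part needs a short counting argument: after all groups are placed, the number of items in any bin differs by at most the number of groups that ``straddle'' it, and one shows the max load is $\lceil\frac{\sum_i c_i}{M}\rceil\le f(r)$ — or, more robustly, one invokes a standard interval-scheduling lemma (the items form $\deg(r)$ ``blocks'' on a cyclic track of length $M$ with total length $\sum_i c_i\le f(r)M$, and such a system can be coloured with $f(r)$ colours, each colour an independent set of blocks, i.e.\ using each track position at most once per colour). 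Once the assignment exists, verifying that the resulting $M$ subtrees are outdegree-$f$ and cover $T$ is routine: the root contributes outdegree $=$ (number of assigned pieces) $\le f(r)$, each $v_i$ and its descendants keep the out-degrees they had in the chosen $\C_i$, and every edge of $T$ is either some $rv_i$ (covered because $T_i$ is entered by at least one piece) or lies in some $T_i$ (covered since $\C_i$ covers $T_i$). This gives $\rmc[f](T,r)\le M$, completing the proof.
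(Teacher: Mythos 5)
Your proposal is correct, and its skeleton is the same as the paper's: both directions reduce to the observation that each member $X$ of a rooted covering meets each branch $T_i$ in at most one connected piece (containing $v_i$), that the pieces landing in $T_i$ form an outdegree-$f$ rooted covering of $(T_i,v_i)$, and that recombining amounts to grouping the $\sum_i c_i$ pieces into parts of size at most $f(r)$ with at most one piece per branch. The difference is how that grouping step is handled. The paper outsources it to \lemref{CMG}, which computes the minimum number of $(\leq d)$-cliques partitioning a complete multipartite graph $K\ang{c_1,\dots,c_{\deg(r)}}$ and is proved in the appendix by induction (repeatedly peeling a $d$-clique off the $d$ largest classes). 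You instead prove both halves inline: the lower bound by the double count $f(r)\cdot|\C|\geq\sum_i c_i$ together with $|\C|\geq c_i$ (which is exactly the lower-bound half of \lemref{CMG}, so no real divergence there), and the upper bound by a cyclic round-robin packing. Your packing argument is sound and complete as sketched: unrolling, item $j\in\{0,\dots,S-1\}$ with $S=\sum_i c_i$ goes to bin $j\bmod M$; the $c_i\leq M$ consecutive indices of group $i$ are distinct modulo $M$, and each bin receives at most $\CEIL{S/M}\leq f(r)$ items since $M\geq\CEIL{S/f(r)}$. This is arguably more elementary and more explicitly constructive than the paper's inductive proof of \lemref{CMG}; what the paper's route buys is reusability -- \lemref{CMG} and its corollaries are invoked again in \thmref{Centroids}, \thmref{AlgRootedMinCoverCover} and the pathwidth arguments, where the finer control of \corref{CMGCMG} (cliques of two prescribed sizes) is needed, so isolating the multipartite clique-partition lemma pays for itself later. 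The only cosmetic point worth tightening in your write-up is the phrase ``each group uses at most $M$ distinct bins'': what you need (and do state afterwards) is that the $c_i\leq M$ consecutive cyclic positions are pairwise distinct.
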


\begin{proof}
  We first prove the upper bound on $\rmc[f](T,r)$. For
  $i\in\{1,\dots,\deg(r)\}$, let $\C_i$ be a degree-$f$ covering of
  $T_i$ with $|\C_i|=c_i$. Let $G$ be the graph with vertex set
  \begin{equation*}
    V(G):=\bigcup_{i=1}^{\deg(r)}\C_i\enspace.
  \end{equation*}
  That is, there is one vertex in $G$ for each subtree in each
  covering $\C_i$. Two vertices in $G$ are adjacent if and only if
  they come from distinct $\C_i$. Thus $G$ is isomorphic to the
  complete $\deg(r)$-partite graph $K\langle
  c_1,\dots,c_{\deg(r)}\rangle$.

  By \lemref{CMG}, there is a partition \F\ of $V(G)$ into
$$\MAXM{\max_{1\leq i\leq\deg(r)}c_i, \CEIL{\frac{1}{f(r)}\sum_{i=1}^{\deg(r)}c_i}}$$ $(\leq f(r))$-cliques in $G$. Each $k$-clique $C\in\F$ corresponds to a set of $k$ subtrees from distinct coverings $\C_i$. Let $X_C$ be the subtree of $T$ induced by the union of the subtrees corresponding to $C$ plus the vertex $r$. Thus $r$ has outdegree $|C|\leq f(r)$ in $X_C$. Since each $\C_i$ is outdegree-$f$, every vertex $x\neq r$ in $X_C$ has outdegree at most $f(x)$ in $X_C$. Thus $\{X_C:C\in\F\}$ is an outdegree-$f$ covering of $(T,r)$. Hence $\rmc[f](T,r)\leq|\F|$, as desired.

We now prove the lower bound on $\rmc[f](T,r)$.  Let \C\ be an
outdegree-$f$ covering of $(T,r)$ with $|\C|=\rmc[f](T,r)$.  Let \X\
be the union, taken over all $X\in\C$, of the set of component
subtrees of $X-r$. Each subtree in \X\ is contained within exactly one
component subtree $T_i$ of $T-r$. For $i\in\{1,\dots,\deg(r)\}$, let
$\X_i$ be the set of subtrees in \X\ that are contained within $T_i$.

We claim that $\X_i$ is an outdegree-$f$ covering of
$(T_i,v_i)$. Every edge of $T_i$ is in some subtree of $\X_i$. For
every vertex $x$ in $T_i$, we have $\outdeg_{T_i}(x)=\outdeg_T(x)$
(since the edge $rv_i$ is incoming at $v_i$). Thus $x$ has outdegree
at most $f(x)$ in every subtree in $\X_i$. Thus $\X_i$ is an
outdegree-$f$ covering of $(T_i,v_i)$. Hence $|\X_i|\geq
\rmc[f](T_i,v_i)=c_i$.

Let $G$ be the graph with vertex set $V(G):=\X$, where two subtrees in
\X\ are adjacent in $G$ if and only if they are in distinct $\X_i$.
Hence $G$ contains the complete $\deg(r)$-partite graph $K\langle
c_1,\dots,c_{\deg(r)}\rangle$ as a subgraph.  For each $X\in\C$,
distinct components of $X-r$ are in distinct components of $T-r$. Thus
the components of $X-r$ are a $k$-clique in $G$, where
$k=\outdeg_X(r)$, which is at most $f(r)$.  Hence \C\ defines a
partition of $V(G)$ into $|\C|$ cliques each with at most $f(r)$
vertices. By \lemref{CMG}, $$\rmc[f](T,r)=|\C|\geq \MAXM{\max_{1\leq
    i\leq\deg(r)}c_i,
  \CEIL{\frac{1}{f(r)}\sum_{i=1}^{\deg(r)}c_i}}\enspace.$$
\end{proof}

\begin{theorem}
  \thmlabel{AlgRootedMinCover} There is a \Oh{n\log n}-time algorithm
  that, given a binding function $f$ of a rooted $n$-vertex tree $T$,
  computes $\rmc[f](T)$.
\end{theorem}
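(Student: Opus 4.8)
The plan is to turn Lemma~\ref{lem:RootedMinCover} into a recursive algorithm by computing $\rmc[f](T_v, v)$ from the bottom up, where $T_v$ denotes the subtree rooted at $v$. Process the vertices of $T$ in postorder (so every vertex is handled after all its children). For a leaf $v$, we have $\rmc[f](T_v,v) = 0$ if $T_v$ is a single vertex — but more usefully, when $v$ is a leaf of the original tree, $T_v = K_1$ and the recursion bottoms out; for the genuine base case a one-edge subtree has $\rmc$ value $1$. For an internal vertex $r$ with children $v_1,\dots,v_k$ whose values $c_i := \rmc[f](T_{v_i},v_i)$ have already been computed, Lemma~\ref{lem:RootedMinCover} gives
$$\rmc[f](T_r,r) = \max\Bigl\{\max_i c_i,\; \CEIL{\tfrac{1}{f(r)}\textstyle\sum_i c_i}\Bigr\}.$$
So the quantity at $r$ is obtained in $O(\deg(r))$ arithmetic operations from the children's values, and summing $O(\deg(r))$ over all $r$ gives $O(n)$ work for the recurrence itself. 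This already yields $\rmc[f](T) = \rmc[f](T,r)$ for the given root $r$.

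The next step is to address where the $\log n$ factor comes from. The integers $c_i$ can be as large as $n$, so each is $O(\log n)$ bits; the additions and the division-with-ceiling at a vertex of degree $\Delta$ cost $O(\Delta \log n)$ bit operations, and over the whole tree this is $O(n \log n)$. If the intended model counts arithmetic on $O(\log n)$-bit integers as unit cost, then the bound is really $O(n)$ and the $\log n$ is a concession to the bit-complexity model; I would state which convention is in force. I would also note that the tree can be rooted and its postorder computed by a single DFS/BFS in $O(n)$ time, and that storing one integer per vertex uses $O(n \log n)$ bits of space, consistent with the claimed running time.

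The only real subtlety — and the step I'd expect to need the most care — is the base case and the handling of leaves and degree-$1$ behaviour inside the recursion: a component subtree $T_i$ of $T-r$ might itself be a single vertex $v_i$, in which case $\rmc[f](T_i,v_i)$ should be $0$, yet the edge $rv_i$ still needs covering and is accounted for at $r$ via the clique structure in Lemma~\ref{lem:RootedMinCover} (a clique may contain the "$v_i$" part even when its own covering is empty — here one should be careful to interpret $K\langle c_1,\dots,c_k\rangle$ correctly when some $c_i = 0$). I would verify that Lemma~\ref{lem:RootedMinCover} is being applied with the right boundary values so that $\rmc[f](K_2, r) = 1$ comes out correctly, and that a vertex all of whose incident edges are leaf edges gets the value $\CEIL{\deg(r)/f(r)}$, matching intuition. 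Once these boundary conventions are pinned down, correctness is an immediate induction on $|V(T)|$ using Lemma~\ref{lem:RootedMinCover}, and the timing analysis is the routine accounting sketched above.
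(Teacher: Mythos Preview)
Your proposal is correct and follows essentially the same approach as the paper: apply \lemref{RootedMinCover} recursively in postorder, doing $O(\deg(r))$ arithmetic steps at each vertex $r$ on integers of magnitude $O(n)$, for a total of $O(n\log n)$ bit operations. Your base-case worry is legitimate but is resolved by the paper's convention $\rmc[f](K_1)=1$ (see \propref{RootedCompleteTree}), which makes every $c_i\geq 1$ and the clique formula in \lemref{RootedMinCover} work without special-casing leaf children.
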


\begin{proof}
  \lemref{RootedMinCover} gives a recursive algorithm to compute
  \rmc[f](T). Let $t(m)$ be the time complexity of this algorithm for
  a tree $T$ with $m$ edges.  We claim that $t(m)\leq\alpha m\log m$
  for some constant $\alpha$. (All logarithms are binary.)\ Let $r$ be
  the root of $T$. Let $v_1,\dots,v_{\deg(r)}$ be the neighbours of
  $r$ in $T$. Let $T_i$ be the component subtree of $T-r$ that
  contains $v_i$. By induction, $\rmc[f](T_i,v_i)$ can be computed in
  $\alpha m_i\log m_i$ time, where $T_i$ has $m_i$ edges. By
  \lemref{RootedMinCover}, $\rmc[f](T)$ can be computed by $2\deg(r)$
  arithmetic steps, each operating on integers at most $m$. This
  computation takes $\alpha\deg(r)\log m$ time. Thus
$$t(m)\leq \alpha\deg(r)\log m+\sum_i \alpha m_i\log m_i \leq \alpha(\log m)(\deg(r)+\sum_i m_i)=\alpha m\log m\enspace.$$ The result follows since $m=n-1$.
\end{proof}


A proof analogous to that of \thmref{AlgRootedMinCover}, but also
using \lemref{CMG}, gives:

\begin{theorem}
  \thmlabel{AlgRootedMinCoverCover} There is a \Oh{n^2}-time algorithm
  that, given a binding function $f$ of a rooted $n$-vertex tree $T$,
  computes $\rmc[f](T)$ and the corresponding covering of $T$ by
  degree-$f$ subtrees.
\end{theorem}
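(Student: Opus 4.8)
The plan is to turn the recursion underlying \lemref{RootedMinCover} into a constructive bottom-up procedure, exactly as \thmref{AlgRootedMinCover} does for the number $\rmc[f](T)$, and then to account for the extra cost of actually emitting the subtrees. Process $T$ from the leaves up. At a vertex $r$ with children $v_1,\dots,v_{\deg(r)}$ and corresponding component subtrees $T_1,\dots,T_{\deg(r)}$ of $T-r$, recurse to obtain, for each $i$, an outdegree-$f$ rooted covering $\C_i$ of $(T_i,v_i)$ with $|\C_i|=c_i:=\rmc[f](T_i,v_i)$. Build the complete $\deg(r)$-partite graph $G\cong K\langle c_1,\dots,c_{\deg(r)}\rangle$ on the vertex set $\bigcup_i\C_i$ (the $i$-th colour class being $\C_i$), apply \lemref{CMG} to compute a partition $\F$ of $V(G)$ into $\max\{\max_ic_i,\lceil\frac1{f(r)}\sum_ic_i\rceil\}$ cliques of size at most $f(r)$, and for each clique $C\in\F$ form the subtree $X_C$ consisting of $r$ together with the subtrees represented by the vertices of $C$, each joined to $r$ through its own root. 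Return $\{X_C:C\in\F\}$ as the covering at $r$.

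Correctness needs no new argument. The upper-bound half of the proof of \lemref{RootedMinCover} shows directly that $\{X_C:C\in\F\}$ is an outdegree-$f$ rooted covering of $(T,r)$, and the lower-bound half shows that its cardinality $\max\{\max_ic_i,\lceil\frac1{f(r)}\sum_ic_i\rceil\}$ equals $\rmc[f](T,r)$; hence the covering returned is minimum.

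For the running time, let $t(m)$ bound the running time on a tree with $m$ edges. Beyond the recursive calls, the work at $r$ is: building $G$ and running \lemref{CMG}, and then assembling the $X_C$. The graph $G$ has $\sum_ic_i$ vertices, and $\sum_ic_i=O(m)$, since a minimum rooted covering of $T_i$ never uses more than $\max\{1,m_i\}$ subtrees (cover each edge of $T_i$ by the root-path reaching it) and at most $\deg(r)$ of the $T_i$ are trivial. Realising \lemref{CMG} by the natural greedy clique-packing, with colour-class sizes kept in buckets, costs $O(\sum_ic_i)=O(m)$; and since $\sum_{C\in\F}|C|=\sum_ic_i$ and each $X_C$ is assembled by merely linking the already-built subtrees to $r$ — with no copying — assembling all the $X_C$ also costs $O(m)$ (the arithmetic of \lemref{RootedMinCover} contributes a further $O(\deg(r)\log m)$, which is negligible in total). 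Thus $t(m)\le O(m)+\sum_it(m_i)$, which unrolls to $t(n-1)=\sum_{v\in V(T)}O(m_v)$, where $m_v$ is the number of edges in the subtree rooted at $v$. Since $\sum_vm_v=\sum_{e\in E(T)}|\{v:e\in T_v\}|\le(n-1)^2$, this is $O(n^2)$. Finally, the covering returned has $O(n)$ subtrees, each of size $O(n)$, so flattening the pointer representation into an explicit list of subtrees also costs $O(n^2)$.

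I expect the main obstacle to be the running-time bookkeeping rather than anything conceptual: one must ensure that subtrees constructed deep in the recursion are not repeatedly copied as they are merged at higher levels (hence the pointer representation, with a single flattening pass at the very end), and one must check that the clique partition guaranteed by \lemref{CMG} can in fact be produced in time linear in the number of vertices of the multipartite graph. With those points addressed, the rest is an immediate constructive reading of the proof of \lemref{RootedMinCover}.
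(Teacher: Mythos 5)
Your proposal is correct and follows exactly the route the paper intends: the paper gives no details beyond remarking that the proof is ``analogous to that of \thmref{AlgRootedMinCover}, but also using \lemref{CMG}'', and your writeup is a faithful elaboration of that remark, including the two points the paper leaves implicit (that the clique partition of \lemref{CMG} is computable in time linear in $\sum_i c_i\le m$, and that a pointer representation avoids repeated copying so the per-vertex work is $O(m_v)$, summing to $O(n^2)$). Nothing further is needed.
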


We now have a polynomial time algorithm for the unrooted covering
problem.

\begin{theorem}
  \thmlabel{AlgMinCover} There is a \Oh{n^2\log n}-time algorithm
  that, given a binding function $f$ of an $n$-vertex tree $T$,
  computes $\mincover[f](T)$ and the corresponding covering of $T$ by
  degree-$f$ subtrees.
\end{theorem}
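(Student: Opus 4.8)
The plan is to reduce the unrooted covering problem to $n$ instances of the rooted covering problem and then invoke \thmref{AlgRootedMinCover} and \thmref{AlgRootedMinCoverCover}. By \lemref{NonRootedMinCover} we have $\mincover[f](T)=\min_{r\in V(T)}\rmc[g_r](T,r)$, where for each candidate root $r$ the binding function $g_r$ of $(T,r)$ is defined by $g_r(r):=f(r)$ and $g_r(x):=f(x)-1$ for every $x\neq r$. Since $f(v)\geq2$ for every vertex $v$, we have $g_r(x)\geq1$, so each $g_r$ is a legitimate $\mathbb{Z}^+$-valued binding function of the rooted tree $(T,r)$ as required by \thmref{AlgRootedMinCover}.

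The algorithm proceeds in two phases. In the first phase, for each vertex $r\in V(T)$, we orient $T$ away from $r$, build $g_r$ from $f$ (both in $O(n)$ time), and run the algorithm of \thmref{AlgRootedMinCover} to compute $\rmc[g_r](T,r)$ in $O(n\log n)$ time; we record a vertex $r^\ast$ attaining the minimum of these $n$ values, so that $\mincover[f](T)=\rmc[g_{r^\ast}](T,r^\ast)$. This phase costs $n\cdot O(n\log n)=O(n^2\log n)$ time. In the second phase, we run the algorithm of \thmref{AlgRootedMinCoverCover} once on $(T,r^\ast)$ with binding function $g_{r^\ast}$, obtaining in $O(n^2)$ time an outdegree-$g_{r^\ast}$ rooted covering $\C$ of $(T,r^\ast)$ with $|\C|=\rmc[g_{r^\ast}](T,r^\ast)$. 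As established in the upper-bound direction of the proof of \lemref{NonRootedMinCover}, interpreting each rooted subtree $(X,r^\ast)\in\C$ as an (unrooted) subtree of $T$ yields a degree-$f$ covering of $T$ of the same cardinality; hence $\C$ is a minimum degree-$f$ covering of $T$. The overall running time is $O(n^2\log n)+O(n^2)=O(n^2\log n)$, and correctness is immediate from \lemref{NonRootedMinCover}.

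There is little genuine obstacle here beyond bookkeeping: the main point to observe is that the rooted quantity must be recomputed from scratch for each of the $n$ candidate roots, since both the orientation and the binding function $g_r$ depend on $r$, so the subproblem values $c_i=\rmc[f](T_i,v_i)$ appearing in \lemref{RootedMinCover} do not transfer between different rootings. (One could hope to share work across roots via a rerooting/dynamic-programming technique to shave a logarithmic or linear factor, but this is unnecessary for the stated bound.) The only things requiring verification are that $g_r$ is always valid — which holds because $f\geq2$ — and that constructing $(T,r)$ and $g_r$ takes only $O(n)$ time per root, which is dominated by the $O(n\log n)$ cost of each call to the algorithm of \thmref{AlgRootedMinCover}.
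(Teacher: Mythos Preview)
Your proof is correct and follows essentially the same approach as the paper: iterate over all $n$ candidate roots using \lemref{NonRootedMinCover}, invoke \thmref{AlgRootedMinCover} for each to find the minimising root in $O(n^2\log n)$ total time, and then call \thmref{AlgRootedMinCoverCover} once at that root to produce the covering in $O(n^2)$ time. The paper's proof is slightly terser but the structure and the time accounting are identical; your additional remarks about $g_r\geq1$ and the per-root $O(n)$ setup cost are correct and harmless.
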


\begin{proof}
  For each vertex $r$ of $T$, compute $\rmc[g](T,r)$, where $g$ is the
  binding function of $(T,r)$ defined by $g(r):=f(r)$ and
  $g(x):=f(x)-1$ for every vertex $x$ of $T-r$. By
  \thmref{AlgRootedMinCover}, $\rmc[g](T,r)$ can be computed to
  \Oh{n\log n} time. Thus this step takes \Oh{n^2\log n} time. Let $r$
  be the vertex that minimises $\rmc[g](T,r)$. This computation takes
  \Oh{n\log n} time. By \lemref{NonRootedMinCover},
  $\mincover[f](T)=\rmc[g](T,r)$.  By \thmref{AlgRootedMinCoverCover},
  the degree-$g$ covering of $(T,r)$ can be computed in \Oh{n^2}
  time. By \lemref{NonRootedMinCover}, this is an optimal degree-$f$
  covering of $T$. The total time complexity is \Oh{n^2\log n}.
\end{proof}




\section{Coverings of Complete Trees}
\seclabel{CompleteTrees}

This section applies the general methods from the previous section to
determine minimum coverings of complete trees. As illustrated in
\figref{CompleteTrees}, for integers $\Delta\geq1$ and $h\geq 1$, the
\emph{complete $\Delta$-ary rooted tree with height $h$}, denoted by
\RootedCompleteTree{\Delta}{h}, is the rooted tree such that every
non-leaf vertex has out-degree $\Delta$, and the distance between the
root and every leaf equals $h$. For convenience, define
$\RootedCompleteTree{\Delta}{0}:=K_1$. For integers $\Delta\geq2$ and
$h\geq 1$, the (non-rooted) \emph{complete $\Delta$-ary tree with
  height $h$}, denoted by \CompleteTree{\Delta}{h}, is the
(non-rooted) tree in which every non-leaf vertex has degree $\Delta$,
and for some vertex $r$, the distance between $r$ and every leaf
equals $h$. Define $\CompleteTree{\Delta}{0}:=K_1$.

\Figure{CompleteTrees} {\includegraphics{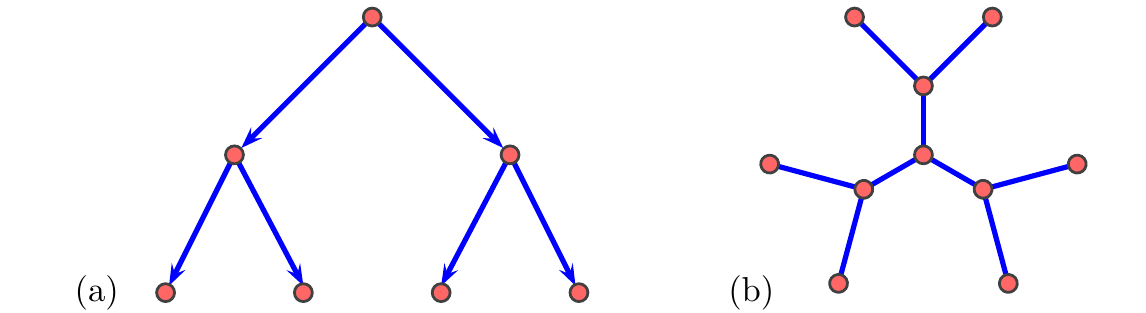}} {(a)
  \RootedCompleteTree{2}{2} and (b) \CompleteTree{3}{2}.}

Consider the following recursively defined function. For every real
number $x>0$, let $\ceill{x}{0}:=1$, and for every integer $k\geq1$,
let
$\ceill{x}{k}:=\ceil{x\cdot\ceill{x}{k-1}}$. Thus $$x^k\leq\ceill{x}{k}\leq\ceil{x}^k,$$
with equality whenever $x$ is an integer. As an example when equality
does not hold, observe that $(\frac{3}{2})^2=\frac{9}{4}$ and
$\Ceill{\frac{3}{2}}{2}=3$ and $\Ceil{\frac{3}{2}}^2=4$.  On the other
hand, $\ceill{x}{k}$ is never far from $x^k$, since $\ceill{x}{k}\leq
x\ceill{x}{k-1}+1$ implies
that $$\ceill{x}{k}\leq\frac{x^{k+1}-1}{x-1}\enspace.$$


\begin{proposition}
  \proplabel{RootedCompleteTree} For all integers $\Delta\geq d\geq 1$
  and
  $h\geq0$, $$\rmc(\RootedCompleteTree{\Delta}{h})=\CEILL{\frac{\Delta}{d}}{h}.$$
\end{proposition}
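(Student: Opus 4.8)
The plan is to prove the formula by induction on $h$, with \lemref{RootedMinCover} (applied with the constant binding function $f\equiv d$) doing all the work in the inductive step.

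For the base case $h=0$ we have $\RootedCompleteTree{\Delta}{0}=K_1$, whose only rooted covering subtree is $K_1$ itself; adopting the natural convention that this covering has one element, $\rmc(\RootedCompleteTree{\Delta}{0})=1=\CEILL{\frac{\Delta}{d}}{0}$ (recall $\ceill{x}{0}:=1$). It is reassuring to check $h=1$ directly as well: $\RootedCompleteTree{\Delta}{1}$ is the star $K_{1,\Delta}$ rooted at its centre $r$, any outdegree-$d$ rooted subtree contains at most $d$ of the $\Delta$ edges at $r$, so $\CEIL{\frac{\Delta}{d}}$ subtrees are needed and obviously suffice, matching $\CEILL{\frac{\Delta}{d}}{1}=\CEIL{\frac{\Delta}{d}}$.

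For the inductive step, fix $h\geq1$ and put $T:=\RootedCompleteTree{\Delta}{h}$ with root $r$. The neighbours $v_1,\dots,v_\Delta$ of $r$ each head a component subtree $T_i$ of $T-r$, and by the definition of a complete $\Delta$-ary rooted tree each $(T_i,v_i)$ is a copy of $\RootedCompleteTree{\Delta}{h-1}$. Hence by induction $c_i:=\rmc(T_i,v_i)=\CEILL{\frac{\Delta}{d}}{h-1}$ for every $i$, so $\max_i c_i=\CEILL{\frac{\Delta}{d}}{h-1}$ and $\sum_{i=1}^{\Delta}c_i=\Delta\cdot\CEILL{\frac{\Delta}{d}}{h-1}$. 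Then \lemref{RootedMinCover} with $f\equiv d$ gives
$$\rmc(T,r)=\MAXM{\CEILL{\frac{\Delta}{d}}{h-1},\;\CEIL{\frac{1}{d}\sum_{i=1}^{\Delta}c_i}}=\MAXM{\CEILL{\frac{\Delta}{d}}{h-1},\;\CEIL{\frac{\Delta}{d}\CEILL{\frac{\Delta}{d}}{h-1}}}.$$
By the recursive definition $\ceill{x}{k}=\ceil{x\cdot\ceill{x}{k-1}}$, the second term equals $\CEILL{\frac{\Delta}{d}}{h}$; and because $\Delta\geq d$ we have $\frac{\Delta}{d}\geq1$, so this term is at least the first. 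Thus the maximum is $\CEILL{\frac{\Delta}{d}}{h}$, completing the induction.

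I do not anticipate any genuine difficulty: the statement is essentially an unwinding of \lemref{RootedMinCover} level by level. The only points that need a moment's care are pinning down the $h=0$ convention for the trivial tree and observing that the ceiling term, rather than $\max_i c_i$, governs the recursion — which is precisely where the hypothesis $\Delta\geq d$ is used.
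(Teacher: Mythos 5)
Your proof is correct and follows essentially the same route as the paper: induction on $h$, with the base case $h=0$ handled by the convention $\ceill{x}{0}=1$, and the inductive step obtained by applying \lemref{RootedMinCover} to the $\Delta$ copies of $\RootedCompleteTree{\Delta}{h-1}$ hanging off the root, using $\Delta\geq d$ to see that the ceiling term dominates the maximum. The extra sanity check at $h=1$ is harmless but not needed.
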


\begin{proof}
  We proceed by induction on $h$. Trivially,
$$\rmc(\RootedCompleteTree{\Delta}{0})=1=\CEILL{\frac{\Delta}{d}}{0}.$$
Now assume that $h\geq1$. Let $r$ be the root of
\RootedCompleteTree{\Delta}{h}.  Observe that each of the $\Delta$
components of $\RootedCompleteTree{\Delta}{h}-r$ is isomorphic to
\RootedCompleteTree{\Delta}{h-1}, rooted at the neighbour of $r$. By
\lemref{RootedMinCover},
$$\rmc(\RootedCompleteTree{\Delta}{h})=
\MAXM{\rmc(\RootedCompleteTree{\Delta}{h-1}),
  \CEIL{\frac{\Delta\cdot\rmc(\RootedCompleteTree{\Delta}{h-1})}{d}}}
\enspace.$$ By induction and since $\Delta\geq d$,
$$\rmc(\RootedCompleteTree{\Delta}{h})
=\CEIL{\frac{\Delta}{d}\cdot\CEILL{\frac{\Delta}{d}}{h-1}}
=\CEILL{\frac{\Delta}{d}}{h} \enspace,$$ as desired.
\end{proof}

\begin{proposition}
  \proplabel{NonRootedCompleteTree} For all integers $\Delta\geq d\geq
  1$ and $h\geq1$, $$\mincover(\CompleteTree{\Delta}{h})=
  \CEIL{\frac{\Delta}{d}\CEILL{\frac{\Delta-1}{d-1}}{h-1}}.$$
\end{proposition}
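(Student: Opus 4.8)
The plan is to reduce the non-rooted problem to the rooted one via \lemref{NonRootedMinCover}, then exploit the symmetry of the complete tree. Since $\CompleteTree{\Delta}{h}$ is vertex-transitive among vertices at a given height from its centre $r$, and the bound in \lemref{NonRootedMinCover} is a minimum over all choices of root, I would first argue that the minimum is attained by rooting at the centre vertex $r$ (the one equidistant from all leaves). Rooting elsewhere produces an unbalanced tree with a longer `spine', which by the monotonicity of $\rmc$ in the recursion of \lemref{RootedMinCover} can only increase the value; so it suffices to compute $\rmc[g](\CompleteTree{\Delta}{h},r)$ where $g(r)=d$ and $g(x)=d-1$ for all other vertices $x$.

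Next I would analyse this rooted quantity directly. The root $r$ has $\Delta$ children, each the root of a subtree isomorphic to $\RootedCompleteTree{\Delta}{h-1}$ but now with binding function $g$, i.e.\ out-degree bound $d-1$ everywhere within that subtree. So by a verbatim repeat of the induction in \propref{RootedCompleteTree} — only with $d$ replaced by $d-1$ and the relevant `arity' being $\Delta-1$ at internal vertices below the children (since each such vertex has lost one unit of out-degree budget, but still has $\Delta$ children... wait, here is the subtlety) — one should get that each child subtree requires $\Ceill{\frac{\Delta-1}{d-1}}{h-1}$ rooted covering subtrees. Then applying the root step of \lemref{RootedMinCover} at $r$ with $f(r)=d$, we have $\deg(r)=\Delta$ children each contributing $c := \Ceill{\frac{\Delta-1}{d-1}}{h-1}$, giving $\MAXM{c,\CEIL{\frac{\Delta c}{d}}} = \CEIL{\frac{\Delta}{d}\Ceill{\frac{\Delta-1}{d-1}}{h-1}}$, since $\Delta\geq d$ forces the ceiling term to dominate. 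This matches the claimed formula.

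The main obstacle — and the step requiring the most care — is the internal recursion with the shifted binding function. Inside a child subtree $\RootedCompleteTree{\Delta}{h-1}$, a non-leaf non-root vertex $v$ has $\outdeg(v)=\Delta$ in the tree but binding bound $g(v)=d-1$. One must check that the correct `effective branching ratio' feeding the recursion of \lemref{RootedMinCover} at such a vertex is $\frac{\Delta-1}{d-1}$ and not $\frac{\Delta}{d-1}$: the point is that $v$'s own parent edge is already accounted for, but $v$ still has $\Delta$ children, so naively the ratio looks like $\frac{\Delta}{d-1}$. The resolution is that the formula in \propref{NonRootedCompleteTree} has $\Delta-1$, so I must have miscounted the structure; the honest approach is to define, for the complete $\Delta$-ary tree, that each non-leaf vertex has \emph{degree} $\Delta$, hence out-degree $\Delta-1$ once rooted (at $r$ or below). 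So in the rooted tree $(\CompleteTree{\Delta}{h},r)$, the root has out-degree $\Delta$ but every other non-leaf vertex has out-degree $\Delta-1$. With $g$ shaving one more unit off every non-root vertex, the sub-subtree recursion genuinely runs with arity $\Delta-1$ and denominator $d-1$, yielding $c=\Ceill{\frac{\Delta-1}{d-1}}{h-1}$ exactly. I would formalise this by a clean inductive claim: for $h\geq 1$, $\rmc[g\restriction](\RootedCompleteTree{}{\,})$ for a child subtree equals $\Ceill{\frac{\Delta-1}{d-1}}{h-1}$, proved just as \propref{RootedCompleteTree} but with the $(\Delta-1,d-1)$ parameters throughout, using $\Delta-1\geq d-1$. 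Then close the argument at the root as above. The only remaining routine checks are (i) that rooting at the centre is optimal, and (ii) the arithmetic $\MAXM{c,\CEIL{\frac{\Delta c}{d}}}=\CEIL{\frac{\Delta c}{d}}$ when $\Delta\geq d$, both of which are short.
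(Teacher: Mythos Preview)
Your approach is essentially the paper's: reduce via \lemref{NonRootedMinCover}, compute at the centre using \lemref{RootedMinCover} together with \propref{RootedCompleteTree} applied with parameters $(\Delta-1,d-1)$, then verify the centre is the optimal root. The one step to make explicit is that last optimality check, which is not quite a generic ``monotonicity'': the paper observes that for any non-centre root $w$, some component of $\CompleteTree{\Delta}{h}-w$ contains a copy of $\RootedCompleteTree{\Delta-1}{h}$ (height $h$, not $h-1$), so $\rmc[g](\CompleteTree{\Delta}{h},w)\geq\rmc[d-1](\RootedCompleteTree{\Delta-1}{h})=\Ceil{\frac{\Delta-1}{d-1}\cdot\rmc[d-1](\RootedCompleteTree{\Delta-1}{h-1})}\geq\Ceil{\frac{\Delta}{d}\cdot\rmc[d-1](\RootedCompleteTree{\Delta-1}{h-1})}=\rmc[g](\CompleteTree{\Delta}{h},v)$, the key inequality being $\frac{\Delta-1}{d-1}\geq\frac{\Delta}{d}$ from $\Delta\geq d$.
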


\begin{proof}
  By \lemref{NonRootedMinCover},
  \begin{equation}
    \eqnlabel{NonRootedCompleteTree}
    \mincover[d](\CompleteTree{\Delta}{h})\;=\;
    \min_{r\in V(\CompleteTree{\Delta}{h})}\rmc[g](\CompleteTree{\Delta}{h},r),
  \end{equation}
  where $g$ is the binding function of \CompleteTree{\Delta}{h}
  defined by $g(r):=d$ and $g(x):=d-1$ for every vertex $x\neq
  r$. Note that $g$ depends on the choice of $r$.

  \CompleteTree{\Delta}{h} has a vertex $v$ such that each component
  of $\CompleteTree{\Delta}{h}-v$, rooted at the neighbour of $v$, is
  \RootedCompleteTree{\Delta-1}{h-1}. First we compute
  $\rmc[g](\CompleteTree{\Delta}{h},v)$. Later we prove that $v=r$
  minimises $\rmc[g](\CompleteTree{\Delta}{h},r)$ in
  \eqnref{NonRootedCompleteTree}. Each component of
  $\CompleteTree{\Delta}{h}-v$, rooted at the neighbour of $v$, is
  isomorphic to \RootedCompleteTree{\Delta-1}{h-1}. By
  \lemref{RootedMinCover},
  \begin{align*}
    \rmc[g](\CompleteTree{\Delta}{h},v)=
    \MAXM{\rmc[g](\RootedCompleteTree{\Delta-1}{h-1}),
      \CEIL{\frac{\Delta\cdot\rmc[g](\RootedCompleteTree{\Delta-1}{h-1})}{g(v)}}}
    \enspace.
  \end{align*}
  Since $\Delta\geq d=g(v)$ and $g(x)=d-1$ for every vertex $x\neq v$,
  \begin{align*}
    \rmc[g](\CompleteTree{\Delta}{h},v)
    =\CEIL{\frac{\Delta}{d}\cdot\rmc[d-1](\RootedCompleteTree{\Delta-1}{h-1})}
    \enspace.
  \end{align*}

  We now prove that $r:=v$ minimises
  $\rmc[g](\CompleteTree{\Delta}{h},r)$ in
  \eqnref{NonRootedCompleteTree}. Let $w\neq v$ be a vertex in
  \CompleteTree{\Delta}{h}. Then some component of
  $\CompleteTree{\Delta}{h}-w$, rooted at the neighbour of $w$,
  contains \RootedCompleteTree{\Delta-1}{h} rooted at $v$. Thus with
  $g$ defined with respect to $w$,
  \begin{align*}
    \rmc[g](\CompleteTree{\Delta}{h},w)
    \geq\rmc[g](\RootedCompleteTree{\Delta-1}{h})
    &=\rmc[d-1](\RootedCompleteTree{\Delta-1}{h})\\
    &=\CEIL{\frac{\Delta-1}{d-1}\cdot\rmc[d-1](\RootedCompleteTree{\Delta-1}{h-1})}\\
    &\geq\CEIL{\frac{\Delta}{d}\cdot\rmc[d-1](\RootedCompleteTree{\Delta-1}{h-1})}\\
    &=\rmc[g](\CompleteTree{\Delta}{h},v)\enspace.
  \end{align*}
  Hence $r:=v$ minimises $\rmc[g](\CompleteTree{\Delta}{h},r)$ in
  \eqnref{NonRootedCompleteTree}. Thus
$$\mincover[g](\CompleteTree{\Delta}{h})
=\rmc[g](\CompleteTree{\Delta}{h},v)
=\CEIL{\frac{\Delta}{d}\cdot\rmc[d-1](\RootedCompleteTree{\Delta-1}{h-1})}.$$
By \propref{RootedCompleteTree},
$$\rmc[d-1](\RootedCompleteTree{\Delta-1}{h-1})
=\CEILL{\frac{\Delta-1}{d-1}}{h-1}.$$ Thus
\begin{align*}
  \mincover[g](\CompleteTree{\Delta}{h})
  =\CEIL{\frac{\Delta}{d}\CEILL{\frac{\Delta-1}{d-1}}{h-1}}\enspace,
\end{align*}
as desired.
\end{proof}

\section{Coverings of Caterpillars}
\seclabel{Caterpillars}

Consider the problem of covering a given tree with subtrees of bounded
degree.  Since a tree with maximum degree $d$ has at least $d$ leaves,
\thmref{FewLeaves} implies that for every integer $d\geq2$, every tree
$T$ with $\ell$ leaves can be covered by $\Ceil{\frac{\ell}{d}}$
degree-$d$ subtrees. However, the number of leaves can be very large,
even in trees that can be covered by a few subtrees of bounded degree,
as we now show for caterpillars\footnote{A \emph{caterpillar} is a
  tree for which a path is obtained by deleting the leaves.}.

\begin{theorem}
  \thmlabel{Caterpillar} For all integers $\Delta\geq d\geq3$, every
  degree-$\Delta$ caterpillar $T$ has a covering by
  $\Ceil{\frac{\Delta-2}{d-2}}$ degree-$d$ subtrees. Conversely, for
  all integers $\Delta\geq d\geq3$, there are infinitely many
  degree-$\Delta$ caterpillars $T$ such that at least
  $\Ceil{\frac{\Delta-2}{d-2}}$ subtrees are needed in every covering
  of $T$ by degree-$d$ subtrees.
\end{theorem}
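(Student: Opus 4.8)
The upper bound is a direct construction. Think of the caterpillar $T$ as a spine path $P=p_1p_2\cdots p_k$ with legs (leaf edges) hanging off the spine vertices. A degree-$d$ subtree that contains the entire spine can use at most $d-2$ of its degree budget at each internal spine vertex for legs (two edges are spent on the spine), and $d-1$ legs at each spine endpoint. So one subtree containing the spine ``absorbs'' at least $d-2$ legs at every internal spine vertex. Since $T$ is degree-$\Delta$, each spine vertex has at most $\Delta-2$ legs. I would cover $T$ by $\Ceil{\frac{\Delta-2}{d-2}}$ subtrees, each consisting of the full spine $P$ together with a share of the legs at each spine vertex: at vertex $p_j$ partition its (at most $\Delta-2$) legs into $\Ceil{\frac{\Delta-2}{d-2}}$ groups of size at most $d-2$, and let the $t$-th subtree $X_t$ take the $t$-th group at every spine vertex. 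Then each $X_t$ has degree at most $2+(d-2)=d$ at internal spine vertices, and the spine endpoints are fine since $\Delta\le d-2+2\le d$ there too (or one simply gives endpoint legs to a single subtree). Every spine edge lies in every $X_t$, and every leg lies in some $X_t$, so $\{X_1,\dots,X_{\Ceil{(\Delta-2)/(d-2)}}\}$ is the desired covering. A boundary remark is needed for the degenerate case $k=1$ (the star $K_{1,\Delta}$), where the ``spine'' is a single vertex with $\Delta\le$ (something) legs; this is handled directly or folded into the general argument.

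\textbf{The lower bound.}
I want infinitely many degree-$\Delta$ caterpillars forcing $\Ceil{\frac{\Delta-2}{d-2}}$ subtrees. Take a long spine $p_1\cdots p_k$ and attach exactly $\Delta-2$ legs to each internal spine vertex (and enough legs at the ends so the tree is degree-$\Delta$, or leave the ends as they are). Letting $k\to\infty$ gives infinitely many such trees. The key claim is that in any covering $\C$ by degree-$d$ subtrees, some subtree must contain a long run of consecutive spine edges, and along that run it can only absorb $d-2$ legs per internal vertex. More precisely: consider a spine edge $p_jp_{j+1}$; the subtrees of $\C$ containing it, restricted to that edge's two sides, behave like paths through it, and at any internal spine vertex $p_j$ that is ``interior'' to the subtree, the subtree uses $2$ of its degree on the spine and hence covers at most $d-2$ of the $\Delta-2$ legs at $p_j$. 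A counting/averaging argument over the legs of a single spine vertex $p_j$ shows at least $\Ceil{\frac{\Delta-2}{d-2}}$ subtrees must be ``active'' at $p_j$ in a way that uses both incident spine edges — but I have to rule out subtrees that cover legs of $p_j$ while using only one (or zero) spine edges at $p_j$, since such a subtree could in principle cover $d-1$ legs. This is where the long spine is essential: a subtree that ends at $p_j$ (uses only one spine edge there) must, to be useful elsewhere, reappear; but subtrees are connected, so a subtree covering legs at many different far-apart spine vertices is forced to contain the spine between them and thus becomes ``interior'' at most of them. Making this rigorous — choosing the spine long enough relative to $\Delta$, and extracting a single spine vertex where every leg-covering subtree is interior — is the main obstacle.

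\textbf{Executing the lower bound.}
I would argue as follows. Fix the caterpillar above with spine length $k$ to be chosen large. Each of the $\Delta-2$ legs at $p_j$ is covered by some subtree of $\C$; a subtree covering a leg at $p_j$ contains $p_j$. If such a subtree is interior at $p_j$ (contains both spine edges $p_{j-1}p_j$ and $p_jp_{j+1}$) it covers at most $d-2$ legs at $p_j$; if not, it contains at most one spine edge at $p_j$, so it is ``locally pendant'' at $p_j$. The point is that the number of locally-pendant-at-$p_j$ subtrees in $\C$ is bounded by a function of $|\C|$ and the geometry, but more usefully: if $|\C|< \Ceil{\frac{\Delta-2}{d-2}}$ for the sake of contradiction, then by pigeonhole at some spine vertex $p_j$ the legs are covered, on average, by batches larger than $d-2$, forcing some subtree to be locally pendant at $p_j$ covering $d-1$ legs there — and also locally pendant and maximally loaded at essentially every spine vertex it meets, which a connected subtree of a path-like spine cannot sustain for all $k$ vertices once $k$ exceeds, say, $2(\Delta-2)$. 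This yields a contradiction for $k$ large, completing the proof. The delicate bookkeeping is choosing the threshold on $k$ and handling the two spine endpoints separately; I expect no conceptual difficulty beyond that, but it will require care.
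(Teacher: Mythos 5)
Your strategy coincides with the paper's on both sides: each covering subtree is the whole spine plus one block of at most $d-2$ legs at each internal spine vertex, and the lower bound uses a long spine with $\Delta-2$ legs per spine vertex together with a hitting/counting argument. Two points need repair before this is a proof.

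First, your treatment of the spine endpoints in the upper bound is wrong as written. The inequality ``$\Delta\le d-2+2\le d$'' asserts $\Delta\le d$, which fails whenever $\Delta>d$; and ``give endpoint legs to a single subtree'' also fails, since an endpoint of the spine can carry up to $\Delta-1>d-1$ legs while a single degree-$d$ subtree through it can absorb only $d-1$ of them. The correct (and easy) fix is the one the paper uses: a spine endpoint has degree $1$ in the spine, so each of the $t:=\Ceil{\frac{\Delta-2}{d-2}}$ subtrees can take $d-1$ legs there, and $\Ceil{\frac{\Delta-1}{d-1}}\le\Ceil{\frac{\Delta-2}{d-2}}$ because $\Delta\ge d$; hence the endpoints never dominate the count.

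Second, the lower bound needs one precise structural statement that your sketch only gestures at. Suppose $t-1$ degree-$d$ subtrees cover the caterpillar. At \emph{every} spine vertex $v_i$ carrying $\Delta-2$ legs, some subtree must contain at least $d-1$ of those legs, since $(t-1)(d-2)<\Delta-2$; say it \emph{hits} $v_i$. The key fact is not that a subtree ``cannot sustain being maximally loaded at all $k$ spine vertices,'' but that a connected subtree can hit \emph{at most two} spine vertices: if it hits $v_a,v_b,v_c$ with $a<b<c$, connectivity forces it to contain the spine path from $v_a$ to $v_c$, giving $v_b$ degree at least $(d-1)+2>d$ in that subtree. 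Hence the total number of hits is at most $2(t-1)$, contradicting the fact that all $n$ spine vertices are hit once $n>2(t-1)$ (your threshold $k>2(\Delta-2)$ also suffices but is looser than needed). With these two repairs your argument becomes exactly the paper's proof.
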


\begin{proof} 
  Let $t:=\Ceil{\frac{\Delta-2}{d-2}}$. We first prove that every
  degree-$\Delta$ caterpillar $T$ has a covering by $t$ degree-$d$
  subtrees. Let $L$ be the set of leaves of $T$. Let $P$ be the path
  $T-L$.  Consider a vertex $x$ of $P$. Thus $1\leq\deg_P(x)\leq2$. As
  illustrated in \figref{Caterpillar}, partition the at most
  $\Delta-\deg_P(x)$ leaf edges incident to $x$ into
  $\Ceil{\frac{\Delta-\deg_P(x)}{d-\deg_P(x)}}$ sets each with at most
  $d-\deg_P(x)$ elements. Since $\Ceil{\frac{\Delta-1}{d-1}}\leq t$,
  we have partitioned the leaf edges of $T$ into $t$ sets, such that
  the union of $P$ and any one set is a degree-$d$ subtree of
  $T$. Every edge is in at least one such subtree.

  \Figure{Caterpillar}{\includegraphics{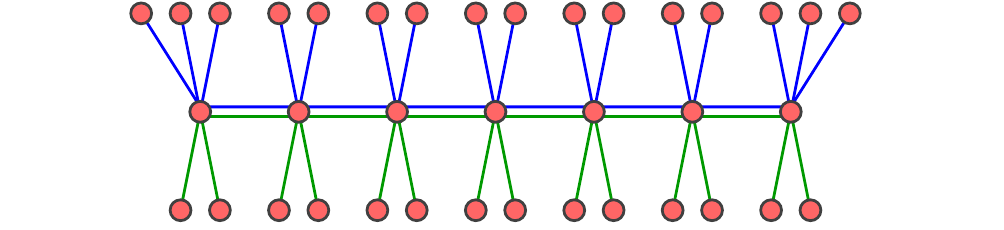}}{Covering a
    degree-$6$ caterpillar by two degree-$4$ subtrees.}

  Now we show that this bound is best possible. For $n\geq 2t-1$, let
  $T_n$ be the caterpillar obtained from the path
  $(u,v_1,\dots,v_{n},w)$ by adding $\Delta-2$ leaves incident to
  $v_i$ for $i\in\{1,\dots,n\}$. Thus each such vertex $v_i$ has
  degree $\Delta$. Every other vertex is a leaf, and $T_n$ is a
  degree-$\Delta$ caterpillar. Suppose on the contrary that $T_n$ can
  be covered by $t-1$ degree-$d$ subtrees $F_1,\dots,F_{t-1}$. Say a
  subtree $F_j$ \emph{hits} a vertex $v_i$ if at least $d-1$ leaf
  edges incident to $v_i$ are in $F_j$. If some vertex $v_i$ is not
  hit, then each subtree contains at most $d-2$ leaf edges incident to
  $v_i$, which is not possible since $(t-1)(d-2)<\Delta-2$. Thus each
  vertex $v_i$ is hit by at least one subtree. Hence the total number
  of hits is at least $n$. Since $n>2(t-1)$, some subtree $F_j$ hits
  at least three vertices, say $v_a,v_b,v_c$ where $1\leq a<b<c\leq
  n$. Since $F_j$ is connected, $F_j$ contains the path
  $(v_a,v_{a+1},\dots,v_c)$. Thus $v_b$ has degree at least $(d-1)+2$
  in $F_j$, which contradicts the assumption that $F_j$ has maximum
  degree at most $d$.
\end{proof}

\section{Pathwidth and Rooted Coverings}
\seclabel{RootedPathwidth}

While \secref{TreeCovering} describes an algorithm for computing
minimal coverings of a given tree by degree-$d$ subtrees, this section
and the next considers the following question: which classes of trees
have coverings by a bounded number of degree-$d$ subtree? The results
in \secref{Caterpillars} say that caterpillars are such a class. To
answer this question more fully, the concept of pathwidth will be
important.  Pathwidth is an important parameter in structural and
algorithmic graph theory, and can be defined in many ways. For forests
we have the following recursive definition, which is easily seen to be
equivalent to the standard definition in terms of path decompositions:
\begin{enumerate}
\item the \emph{pathwidth} of $K_1$ is $0$,
\item the \emph{pathwidth} of a forest $F$ is the maximum pathwidth of
  a connected component of $F$,
\item the \emph{pathwidth} of a tree $T$ is the minimum $k$ such that
  there exists a path $P$ of $T$ and the pathwidth of $T-V(P)$ is at
  most $k-1$.
\end{enumerate}

Caterpillars are precisely the trees of pathwidth $1$. In general, the
pathwidth of an $n$-vertex tree can be computed in \Oh{n} time
\citep{EST-IC94, Skodonis-JAlg03, Bodlaender-SJC96}, and is at most
\Oh{\log n} \citep{Bodlaender-TCS98}. To generalise
\thmref{Caterpillar} for graphs of given pathwidth, we first consider
a rooted variant of the problem in \secref{RootedPathwidth}, and then
we consider the unrooted version in \secref{UnrootedPathwidth}.

We now consider rooted coverings of rooted trees with given pathwidth,
where (for our purposes) the \emph{pathwidth} of a directed graph is
defined to be the pathwidth of the underlying undirected graph. For
all integers $\Delta\geq d\geq 3$ and $k\geq0$, let $\pi(\Delta,d,k)$
be the maximum of $\rmc[d](T)$, where $T$ is an outdegree-$\Delta$
rooted tree with pathwidth $k$. That is, $\pi(\Delta,d,k)$ is the
minimum integer such that every outdegree-$\Delta$ rooted tree with
pathwidth $k$ has an outdegree-$d$ rooted covering with
$\pi(\Delta,d,k)$ subtrees. Below we show that $\pi(\Delta,d,k)$ is
finite. In particular, we prove that $\pi(\Delta,d,k)$ satisfies the
following recurrence, thus determining $\pi(\Delta,d,k)$ precisely.

\begin{theorem}
  \thmlabel{RootedPathwidth} For every integer $\Delta\geq d\geq3$,
$$\pi(\Delta,d,0)=1\enspace,$$ 
and for every integer $k\geq1$, if $\Delta=d$ then
$$\pi(\Delta,d,k)=1\enspace,$$ 
and if $\Delta=d+1$ and $t:=\pi(\Delta,d,k-1)\bmod{d(d-1)}$ then
$$\pi(d+1,d,k)=
\begin{cases}
  \CEIL{\frac{d-1}{d-2}\cdot\pi(\Delta,d,k-1)-\frac{2}{d-2}\FLOORFRAC{\pi(\Delta,d,k-1)}{d(d-1)}}	& \text{if }t<(d-1)(d-2)\\
  \CEIL{\frac{d}{d-1}\cdot\pi(\Delta,d,k-1)+\CEILFRAC{\pi(\Delta,d,k-1)}{d(d-1)}}
  & \text{if }t\geq (d-1)(d-2)\enspace,
\end{cases}$$ and if $\Delta\geq d+2$ then,
$$\pi(\Delta,d,k)=
\CEIL{\frac{\Delta-2}{d}\cdot\pi(\Delta,d,k-1)+
  \frac{2}{d}\CEIL{\frac{\Delta-1}{d-1}\cdot\pi(\Delta,d,k-1)}}
\enspace.$$
\end{theorem}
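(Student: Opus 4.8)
The plan is to establish the recurrence by matching an upper bound (a construction-free argument showing every outdegree-$\Delta$ rooted tree $T$ of pathwidth $k$ has a small covering) with a lower bound (an explicit family of trees requiring that many subtrees). The base cases $k=0$ (so $T=K_1$, $\rmc[d](T)=1$) and $\Delta=d$ (so $T$ itself is outdegree-$d$, hence a single subtree covers it) are immediate. For $k\geq 1$ and $\Delta>d$, the engine is \lemref{RootedMinCover}: if $r$ is the root and $T_1,\dots,T_{\outdeg(r)}$ are the root-components with $c_i=\rmc[d](T_i,v_i)$, then $\rmc[d](T,r)=\max\{\max_i c_i,\lceil\frac{1}{d}\sum_i c_i\rceil\}$. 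So the strategy is to understand how pathwidth decomposes: by the recursive definition of pathwidth, there is a path $P$ in $T$ with $\text{pw}(T-V(P))\leq k-1$; the components of $T-V(P)$ are rooted subtrees of pathwidth at most $k-1$, hence each needs at most $\pi(\Delta,d,k-1)$ subtrees.

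\medskip

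\textbf{The upper bound.} I would track how the ``spine'' path $P$ interacts with the outdegree constraint. At each vertex $x$ along $P$, the path uses up $1$ or $2$ of its outgoing slots (depending on whether $x$ is an endpoint of $P$ or internal, in the rooted orientation the relevant count is how many of $P$'s edges leave $x$), leaving $\Delta-1$ or $\Delta-2$ slots for hanging subtrees, each of which carries a covering into at most $\pi(\Delta,d,k-1)$ pieces. Then I apply \lemref{RootedMinCover} (equivalently \lemref{CMG} on the associated complete multipartite graph) to amalgamate: a subtree built around $P$ can absorb up to $d-1$ (or $d-2$) of these hanging pieces at each internal spine vertex. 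This is exactly where the three cases come from: when $\Delta\geq d+2$ the $\frac{\Delta-2}{d}$ and $\frac{\Delta-1}{d-1}$ coefficients reflect distributing $\Delta-2$ pieces into groups of $d-1$ at internal spine vertices versus $\Delta-1$ pieces at the top, with the factor $\frac{2}{d}$ accounting for the two ends; when $\Delta=d+1$ the arithmetic is tighter (only one ``excess'' slot per vertex), and the $\bmod\,d(d-1)$ condition records whether the leftover pieces after grouping by $d-1$ can themselves be consolidated in groups of $d$ along the spine — hence the dichotomy at the threshold $(d-1)(d-2)$.

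\medskip

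\textbf{The lower bound.} Here I would build, for each $k$, a tree $T_k$ obtained by taking a long path (the pathwidth-$k$ spine) and attaching to each internal spine vertex $\Delta-2$ disjoint copies of the extremal pathwidth-$(k-1)$ tree $T_{k-1}$, with $\Delta-1$ copies at each end. Making the spine long enough forces (by a pigeonhole/connectivity argument exactly like the one in \thmref{Caterpillar}: a single degree-$d$ subtree containing three spine vertices would blow the degree bound at the middle one) that no covering subtree can ``span'' too much of the spine, so the hanging copies genuinely demand $\rmc[d](T_{k-1},\cdot)=\pi(\Delta,d,k-1)$ each and cannot be shared efficiently, yielding the matching lower bound via \lemref{CMG} again (the clique-partition lower bound). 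The pathwidth of $T_k$ is exactly $k$ because deleting the spine leaves pathwidth-$(k-1)$ components and one cannot do better.

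\medskip

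\textbf{The main obstacle.} The hard part is the case analysis for $\Delta=d+1$ and the nested-ceiling bookkeeping. The quantity $\pi(\Delta,d,k-1)$ is not divisible by $d(d-1)$ in general, and the two-level consolidation (first group hanging pieces in $(d-1)$'s to build sub-subtrees, then group those along the spine in $d$'s) interacts badly with the ceilings; getting the exact extremal value — rather than an off-by-one bound — requires carefully choosing, in both the construction and the optimality proof, how the remainder $t=\pi(\Delta,d,k-1)\bmod d(d-1)$ is allocated between the two ends and the internal vertices, and verifying that \lemref{CMG} is tight in precisely that configuration. The $\Delta\geq d+2$ case is comparatively routine once the $d+1$ case machinery is in place, since there the extra slack means the greedy grouping is already optimal.
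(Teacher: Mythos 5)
Your upper-bound plan is essentially the paper's: decompose along a spine whose removal drops the pathwidth, cover the hanging components by induction, and amalgamate with \lemref{RootedMinCover} and the clique-partition lemma for complete multipartite graphs. Two details you gloss over but which the paper must handle are that the root $r$ need not lie on the pathwidth-certifying path $P$ (so the spine is really $H=P\cup Q$ for a connecting path $Q$, and the case analysis runs over $\outdeg_H(v)\in\{0,1,2\}$), and that the $\tfrac{2}{d}\CEIL{\frac{\Delta-1}{d-1}\pi}$ term in the $\Delta\geq d+2$ formula comes from the single outdegree-$2$ vertex of $H$ (a partition into $\Lambda$ $(d-2)$-cliques plus some $(\leq d)$-cliques, via \corref{CMGCMG}), not from ``the two ends''. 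These are repairable.

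The genuine gap is in the lower bound. First, your extremal construction is mis-calibrated for the \emph{rooted} problem: you attach $\Delta-2$ copies at internal spine vertices and $\Delta-1$ at the ends, which is the unrooted (degree) allocation. In the rooted orientation every non-root spine vertex has only one outgoing spine edge, so it must receive $\Delta-1$ hanging copies to saturate its outdegree; only the root $v_0$ (with two outgoing spine edges) receives $\Delta-2$. With your allocation the counting along an arm gives only $(\Delta-2)\pi\leq(d-1)\bigl(|\C^{++}|+|\C^{-+}|\bigr)$ instead of $(\Delta-1)\pi\leq(d-1)\bigl(|\C^{++}|+|\C^{-+}|\bigr)$, and the resulting bound falls short of the upper bound. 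Second, the ``three spine vertices blow the degree bound'' pigeonhole from \thmref{Caterpillar} is not the mechanism here (the paper uses that argument only for the unrooted \thmref{Pathwidth}). To extract the exact value one must classify the covering subtrees by which of the two root edges $rv_{-1},rv_{1}$ they contain (four classes $\C^{++},\C^{+-},\C^{-+},\C^{--}$, which absorb $d-2$, $d-1$, $d-1$, $d$ hanging pieces at the root respectively and contribute differently to the two arms), derive three counting inequalities, and solve the resulting integer linear program (\lemref{IntegerProgram}, and its sharpening \lemref{AnotherIntegerProgram} for $\Delta=d+1$, where the $t\bmod d(d-1)$ dichotomy actually originates). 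Your sketch names the difficulty but not this mechanism, and ``\lemref{CMG} is tight in that configuration'' does not substitute for it.
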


First observe that $\pi(d,d,k)=1$ since a tree covers itself, and that
$\pi(\Delta,d,0)=1$ since the only tree with pathwidth $0$ is
$K_1$. We now prove the upper bound on $\pi(\Delta,d,k)$, which indeed
shows that $\pi(\Delta,d,k)$ is finite and well defined. The next
lemma will facilitate our inductive proof.

\begin{lemma}
  \lemlabel{RootedPath} For every vertex $r$ of a tree $T$ (with at
  least one edge) there is a degree-$3$ subtree $H$ of $T$ such that:
  \begin{itemize}
  \item the pathwidth of $T-V(H)$ is less than the pathwidth of $T$,
  \item $r$ is in $H$ and $\deg_H(r)\in\{1,2\}$,
  \item there is at most one vertex of $H$ with degree $3$, and
  \item if there is a vertex of $H$ with degree $3$, then
    $\deg_H(r)=1$.
  \end{itemize}
\end{lemma}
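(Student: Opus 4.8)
The plan is to exploit the recursive definition of pathwidth directly. Since $T$ has pathwidth $k\geq 1$, there is a path $Q$ in $T$ such that $T-V(Q)$ has pathwidth at most $k-1$; fix such a $Q$. The idea is to let $H$ consist of $Q$ together with a single path from $r$ to $Q$, taking care that the attachment creates at most one degree-$3$ vertex. First I would handle the easy case: if $r$ already lies on $Q$, then $H:=Q$ works — it is a path, hence degree-$2$, with no vertex of degree $3$, and $\deg_H(r)\in\{1,2\}$ automatically. Deleting $V(H)=V(Q)$ drops the pathwidth, as required.

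Now suppose $r\notin V(Q)$. Let $R$ be the unique path in $T$ from $r$ to $V(Q)$, meeting $Q$ at a single vertex $z$. I would set $H:=Q\cup R$. Then $H$ is a subtree of $T$; its only vertex of degree possibly equal to $3$ is $z$ (where the two branches of $Q$ meet the branch $R$), and every other vertex of $H$ has degree at most $2$, so $H$ is degree-$3$ with at most one degree-$3$ vertex. Since $r$ is an endpoint of $R$ and $r\notin V(Q)$, we have $\deg_H(r)=1$, which simultaneously gives $\deg_H(r)\in\{1,2\}$ and the last bullet (if $z$ has degree $3$ then indeed $\deg_H(r)=1$). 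It remains to check the pathwidth condition: $T-V(H)$ is an induced subforest of $T-V(Q)$, and pathwidth is monotone under taking subgraphs, so $\operatorname{pw}(T-V(H))\leq\operatorname{pw}(T-V(Q))\leq k-1<k=\operatorname{pw}(T)$. That disposes of every requirement.

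The one subtlety — and the step I expect to need the most care — is the degenerate situation where $z$ is an endpoint of $Q$: then $z$ has degree $1$ in $Q$ and degree $2$ in $H=Q\cup R$, so $H$ is actually a path, with no degree-$3$ vertex, and the argument only gets easier. Similarly if $Q$ is a single vertex or $R$ is a single edge, nothing breaks. The other point worth a sentence is to confirm that the chosen $Q$ can be taken to contain at least one edge when $k\geq 1$ is not needed — we never use that — but we should note $H$ contains at least one edge since $T$ does and $H$ contains either $Q$ (if $r\in V(Q)$, and then $Q$ has an edge because otherwise $T-V(Q)=T$ would have pathwidth $k$) or the nontrivial path $R$. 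I would also remark that the lemma as used later only needs $\deg_H(r)\le 2$ and the single-degree-$3$-vertex property, both of which are transparent from this construction.
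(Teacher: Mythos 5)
Your construction is the same as the paper's: take a path $Q$ witnessing the recursive definition of pathwidth and join $r$ to it by the unique connecting path; the paper's proof is exactly this, stated in one line. The one place your write-up actually goes wrong is the degenerate case where $r\in V(Q)$ and $Q$ is a single vertex. You claim $Q$ must contain an edge ``because otherwise $T-V(Q)=T$ would have pathwidth $k$'', but that inference is invalid: $Q=\{r\}$ is a one-vertex path, not the empty set, so $T-V(Q)=T-r\neq T$, and a one-vertex path can perfectly well witness the recursion. Concretely, take $T=K_{1,3}$ with $r$ its centre: $\operatorname{pw}(T)=1$, the path $Q=\{r\}$ gives $\operatorname{pw}(T-V(Q))=0$, and your $H:=Q$ then has $\deg_H(r)=0$, violating the second bullet of the lemma. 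The repair is the one the paper makes explicitly: before anything else, extend $Q$ along one edge of $T$ (possible since $T$ has an edge); deleting more vertices only decreases the pathwidth of what remains, so the recursion is still witnessed, and $H$ is then guaranteed to contain an edge in both of your cases. With that one-line fix, the rest of your argument --- the degree count at the attachment vertex $z$, monotonicity of pathwidth under subgraphs, and the final two bullets --- is correct and matches the paper's intended verification.
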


\begin{proof} 
  By definition, there is a path $P$ of $T$, such that the pathwidth
  of $T-V(P)$ is less than the pathwidth of $T$. Extend $P$ so that it
  has at least one edge. Let $Q$ be the (possibly empty) path from $r$
  to $P$ in $T$. It is easily verified that $H=P\cup Q$ satisfies the
  lemma.
\end{proof}

\begin{proof}[Proof of Upper Bound in \thmref{RootedPathwidth} for
  $\Delta\geq d+2$] To prove an upper bound on $\pi(\Delta,d,k)$ we
  construct the desired rooted covering of a given outdegree-$\Delta$
  rooted tree with pathwidth $k$. We proceed by induction on $k\geq
  1$. For the base case, suppose that $k=1$. Then $\pi(\Delta,d,0)=1$
  and $t=1<(d-1)(d-2)$. Thus the theorem claims that
  $\pi(\Delta,d,1)=\CEIL{\frac{\Delta-2}{d-2}}$, which is proved in
  \thmref{Caterpillar}. Now assume that $k\geq2$.  Let $T$ be an
  outdegree-$\Delta$ rooted tree with pathwidth $k$. Let $r$ be the
  root of $T$. By \lemref{RootedPath}, there is a degree-$3$ subtree
  $H$ of $T$ such that the pathwidth of $T-V(H)$ is at most $k-1$, $r$
  is in $H$ with degree $1$ or $2$, there is at most one vertex of $H$
  with degree $3$, and if there is a vertex of $H$ with degree $3$,
  then $\deg_H(r)=1$. Thus, if $H$ inherits the orientation of $T$,
  then $H$ has outdegree at most $2$, and at most one vertex in $H$
  has outdegree $2$. As a shorthand, define
$$\pi:=\pi(\Delta,d,k-1)\text{ and }\Lambda:=\CEIL{\frac{\Delta-1}{d-1}\pi}\enspace.$$

For each vertex $v$ in $H$, let $T_v$ be the component of $T-E(H)$
that contains $v$. Thus $T_v$ is rooted at $v$ (in the orientation of
$T$), as illustrated in \figref{RootedPathwidth}.

\Figure{RootedPathwidth}{\includegraphics{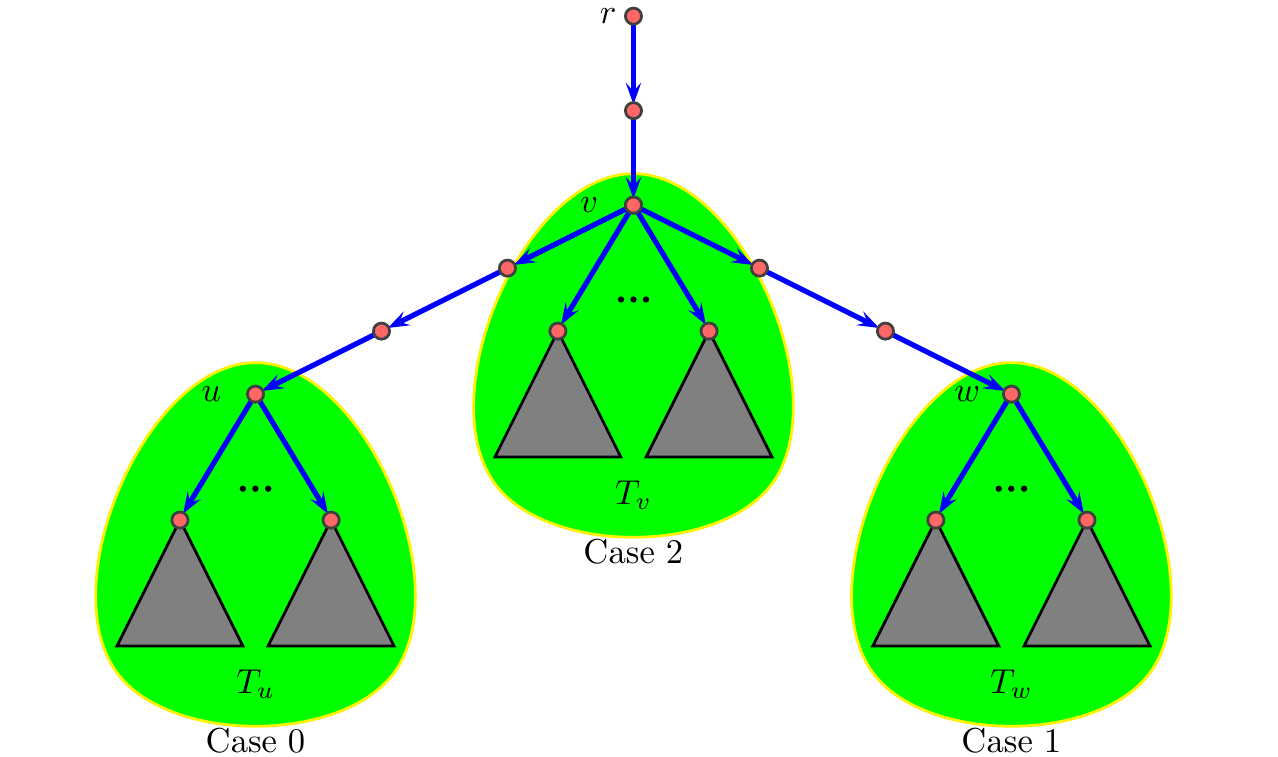}}{Construction
  in the proof of the upper bound in \thmref{RootedPathwidth}.}

We now determine a rooted covering of $T_v$. For each neighbour $w$ of
$v$ in $T_v$, the component of $T_v-v$ that contains $w$ has pathwidth
at most $k-1$. Thus by induction this component (rooted at $w$) has a
rooted covering by $\pi$ subtrees.

\textbf{\boldmath Case 0. $\outdeg_H(v)=0$:} Let $f$ be the binding
function of $T_v$ defined by $f(x):=d$ for every vertex $x$ in
$T_v$. By \lemref{RootedMinCover}, $T_v$ has an outdegree-$d$ rooted
covering $\C_v$, where $$|\C_v|\leq\CEIL{\frac{\Delta}{d}\pi}\leq
\Lambda\enspace.$$

\textbf{\boldmath Case 1. $\outdeg_H(v)=1$:} Let $f$ be the binding
function of $T_v$ defined by $f(v):=d-1$ and $f(x):=d$ for every
vertex $x$ in $T_v-v$. Since $v$ has outdegree at most $\Delta-1$ in
$T_v$, by \lemref{RootedMinCover} applied to $f$, $T_v$ has an
outdegree-$d$ rooted covering $\C_v$, such that $v$ has outdegree at
most $d-1$ in each subtree in $\C_v$, and
$$|\C_v|\leq \CEIL{\frac{\Delta-1}{d-1}\pi}=\Lambda\enspace.$$

\textbf{\boldmath Case 2. $\outdeg_H(v)=2$:} By induction, each
component of $T_v-v$, rooted at the neighbour of $v$, has an
outdegree-$d$ rooted covering consisting of $\pi$ subtrees. Let $G$ be
the graph with one vertex for each subtree in the coverings of the
components of $T_v-v$, where two vertices are adjacent if the
corresponding subtrees come from distinct components. Since $v$ has
outdegree at most $\Delta-2$ in $T_v$, $G$ is isomorphic to a subgraph
of the Turan $(\Delta-2)$-partite graph with $\pi$ vertices in each
colour class. Apply \corref{CMGCMG} with $n=(\Delta-2)\pi$ and $p=d-2$
and $q=d$ and $m=\Lambda$. (Observe that $\Delta\geq d+2$ implies that
$\Delta-2\geq p,q$, and thus, \corref{CMGCMG} is applicable.)\ Hence
there is a partition of $V(G)$ into $\Lambda$ $(d-2)$-cliques and
$$\CEIL{\frac{\max\{(\Delta-2)\pi-(d-2)\Lambda,0\}}{d}}$$
$(\leq d)$-cliques. Since $\Delta\geq d$ we have $(d-1)(\Delta-2)\pi
\geq (d-2)(\Delta-1)\pi$.  Thus $(d-1)(\Delta-2)\pi + d(d-1)
>(d-2)(\Delta-1)\pi+(d-1)(d-2)$. That is, $(\Delta-2)\pi + d >
(d-2)(\frac{\Delta-1}{d-1}\pi+1)>(d-2)\Lambda$.  Hence $(\Delta-2)\pi
-(d-2)\Lambda > -d$, implying $\ceil{\frac1d\big((\Delta-2)\pi
  -(d-2)\Lambda\big)}\geq 0$.  Hence
$\ceil{\frac1d\max\{(\Delta-2)\pi-(d-2)\Lambda,0\}}=
\ceil{\frac1d\big((\Delta-2)\pi-(d-2)\Lambda\big)}$.  Therefore there
is a partition of $V(G)$ into $\Lambda$ $(d-2)$-cliques and
$$\CEIL{\frac{(\Delta-2)\pi-(d-2)\Lambda}{d}}$$
$(\leq d)$-cliques. Using the method in \lemref{RootedMinCover}, it
follows that $T_v$ has an outdegree-$d$ rooted covering
$\C_v\cup\D_v$, such that $|\C_v|\leq \Lambda$ and $v$ has outdegree
$d-2$ in each subtree in $\C_v$; and
$$|\D_v|\leq\CEIL{\frac{(\Delta-2)\pi-(d-2)\Lambda}{d}}\enspace,$$
and $v$ has outdegree at most $d$ in each subtree in $\D_v$.

\smallskip Note that $|\C_v|\leq \Lambda$ for every vertex $v$ of $H$.
For $i\in\{1,\dots,\Lambda\}$, let $X_i$ be the union, taken over
every vertex $v$ in $H$, of the $i$-th subtree in $\C_v$. Observe that
in the construction in Case ($j$), $\outdeg_H(v)=j$ and $v$ has
outdegree at most $d-j$ in each subtree in $\C_v$. Thus $X_i\cup H$
has outdegree at most $d$, and $X_i\cup H$ contains $r$.

Suppose that $\outdeg_H(v)=2$ for some vertex $v$ in $H$. Let $Q$ be
the directed path from $r$ to $v$ in $H$. (Note that it is possible
that $v=r$.)\ Then for every subtree $Y\in\D_v$, $Y\cup Q$ has
outdegree at most $d$ (since no outgoing edges incident to $v$ are in
$Q$, and every other vertex in $Q$ has outdegree $1$ in $Y\cup Q$, and
$1<d$.)\

Observe that every edge of $T$ is in some subtree $X_i\cup H$ (where
$1\leq i\leq \Lambda$) or some subtree $Y\cup Q$ (where $Y\in\D_v$ and
$\outdeg_H(v)=2$). Hence $$\SET{X_i\cup H:1\leq i\leq
  \Lambda}\cup\SET{Y\cup Q:Y\in D_v}$$ is an outdegree-$d$ rooted
covering of $T$. Therefore
\begin{align*}
  \pi(\Delta,d,k) \leq \Lambda +
  \CEIL{\frac{(\Delta-2)\pi-(d-2)\Lambda}{d}} =
  \CEIL{\frac{2\Lambda+(\Delta-2)\pi}{d}}\enspace,
\end{align*}
as desired.
\end{proof}


\begin{proof}[Proof of Upper Bound in \thmref{RootedPathwidth} for
  $\Delta=d+1$] We proceed by induction on $k\geq 1$. Let
  $\pi:=\pi(\Delta,d,k-1)$. For the base case, suppose that
  $k=1$. Then $\pi=1$ and $t=1<(d-1)(d-2)$. Thus the theorem claims
  that $\pi(d+1,d,1)=\CEIL{\frac{\Delta-2}{d-2}}$, which is proved in
  \thmref{Caterpillar}. Now assume that $k\geq2$. Let $T$ be an
  outdegree-$\Delta$ rooted tree with pathwidth $k$. Let $r$ be the
  root of $T$. By definition, there is an (undirected) path $P$ in
  $T$, such that the pathwidth of $T-V(P)$ is less than $k$. Let $Q$
  be the shortest path in $T$ from $r$ to a vertex in $P$. Let
  $H:=P\cup Q$. Let $s$ be the vertex in $P\cap Q$. Note that it is
  possible that $r\in P$, in which case $s=r$. Let $P_1$ and $P_2$ be
  the subpaths of $P$ such that $P_1\cap P_2=\{s\}$ and $P_1\cup
  P_2=P$. Each $P_i$ is a directed path starting at $s$. Let
  $Q_i:=Q\cup P_i$ for $i\in\{1,2\}$. Each $Q_i$ is a directed path
  starting at $r$.

  For each vertex $v$ in $H$, let $T_v$ be the component of $T-E(H)$
  that contains $v$. Thus $T_v$ is rooted at $v$ (in the orientation
  of $T$). We now determine a rooted covering of $T_v$. For each
  neighbour $w$ of $v$ in $T_v$, the component of $T_v-v$ that
  contains $w$ has pathwidth at most $k-1$. Thus by induction this
  component (rooted at $w$) has a rooted covering by $\pi$ subtrees.

  Let $t:=\pi\bmod{d(d-1)}$. Define
$$y:=
\begin{cases}
  \FLOORFRAC{\pi}{d(d-1)}	& \text{if }t<(d-1)(d-2)\\
  \CEILFRAC{\pi}{d(d-1)} & \text{if }t\geq(d-1)(d-2)\enspace,
\end{cases}$$ and
$$x:=
\begin{cases}
  \CEIL{\frac{d-1}{d-2}\pi-\frac{2d-2}{d-2}y}
  & \text{if }t<(d-1)(d-2)\\
  \CEIL{\frac{d}{d-1}\pi-y} & \text{if }t\geq(d-1)(d-2)\enspace.
\end{cases}$$

It is easily verified that $(d-2)x+2(d-1)y\geq(\Delta-2)\pi$ and
$(d-1)(x+y)\geq(\Delta-1)\pi$ and $d(x+y)>\Delta\pi$ and
$(d-1)\pi+d-2>2y(d-1)$.


\textbf{\boldmath Case 0. $\outdeg_H(v)=0$:} Then $v\in Q_i$ for some
$i\in\{1,2\}$, and $v$ has outdegree at most $\Delta$ in $T_v$. By
\lemref{RootedMinCover}, $T_v$ has an outdegree-$d$ rooted covering
$\C_v\cup\D^i_v$, where
$$|\C_v\cup\D^i_v|\leq\CEIL{\frac{\Delta}{d}\pi}\leq x+y\enspace.$$
Hence we can choose $\C_v$ and $\D^i_v$ so that $|\C_v|\leq x$ and
$|\D^i_v|\leq y$.

\textbf{\boldmath Case 1. $\outdeg_H(v)=1$:} Then $v\in Q_i$ for some
$i\in\{1,2\}$, and $v$ has outdegree at most $\Delta-1$ in $T_v$. By
\lemref{RootedMinCover}, $T_v$ has an outdegree-$d$ rooted covering
$\C_v\cup\D^i_v$, such that $v$ has outdegree at most $d-1$ in each
subtree in $\C_v\cup\D^i_v$, and
$$|\C_v\cup\D^i_v|\leq\CEIL{\frac{\Delta-1}{d-1}\pi}\leq x+y\enspace.$$
Hence we can choose $\C_v$ and $\D^i_v$ so that $|\C_v|\leq x$ and
$|\D^i_v|\leq y$.

\textbf{\boldmath Case 2. $\outdeg_H(s)=2$:} By induction, each
component of $T_s-s$, rooted at the neighbour of $s$, has an
outdegree-$d$ rooted covering consisting of $\pi$ subtrees. Let $G$ be
the graph with one vertex for each subtree in the coverings of the
components of $T_s-s$, where two vertices are adjacent if the
corresponding subtrees come from distinct components. Since $s$ has
outdegree at most $\Delta-2$ in $T_s$, $G$ is isomorphic to a subgraph
of the Turan $(\Delta-2)$-partite graph with $\pi$ vertices in each
colour class.

Apply \corref{CMGCMG} with $n=(\Delta-2)\pi=(d-1)\pi$ and $p=d-1$ and
$q=d-2$ and $m=2y$ . (Observe that $\Delta=d+1$ implies that
$\Delta-2\geq p,q$, and thus, \corref{CMGCMG} is applicable.)\ Thus
there is a partition of $V(G)$ into $2y$ $(\leq d-1)$-cliques
and $$\CEIL{\frac{\max\{(d-1)\pi-(d-1)2y,0\}}{d-2}}$$ $(\leq
d-2)$-cliques. Since $(d-1)\pi-2y(d-1)>2-d$, we have
$\frac{(d-1)\pi-2y(d-1)}{d-2}>-1$, implying
$$\CEIL{\frac{(d-1)\pi-2y(d-1)}{d-2}}\geq0\enspace.$$
Thus
$$\CEIL{\frac{\max\{(d-1)\pi-(d-1)2y,0\}}{d-2}}=
\CEIL{\frac{(d-1)\pi-(d-1)2y}{d-2}}\leq x\enspace.$$ Hence there is a
partition of $V(G)$ into $2y$ $(\leq d-1)$-cliques and $x$ $(\leq
d-2)$-cliques. Using the method in \lemref{RootedMinCover}, it follows
that $T_s$ has an outdegree-$d$ rooted covering
$\C_s\cup\D^1_s\cup\D^2_s$, such that $|\D^1_s|\leq y$ and $s$ has
outdegree at most $d-1$ in each subtree in $\D^1_s$; and $|\D^2_s|\leq
y$ and $s$ has outdegree at most $d-1$ in each subtree in $\D^2_s$;
and $|\C_s|\leq x$ and $s$ has outdegree at most $d-2$ in each subtree
in $\C_s$.

\smallskip Observe that $|\C_v|\leq x$ for every vertex $v$ of
$H$. For $j\in\{1,\dots,x\}$, let $C_j$ be the union, taken over every
vertex $v$ in $H$, of the $j$-th subtree in $\C_v$. Observe that in
the construction in Case ($\ell$), $\outdeg_H(v)=\ell$ and $v$ has
outdegree at most $d-\ell$ in each subtree in $\C_v$. Thus $C_j\cup H$
has outdegree at most $d$. By construction, $C_j\cup H$ is connected
and contains $r$.

For $i\in\{1,2\}$, observe that $|\D^i_v|\leq y$ for every vertex $v$
of $Q_i$. For $j\in\{1,\dots,y\}$, let $D^i_j$ be the union, taken
over every vertex $v$ in $Q_i$, of the $j$-th subtree in
$\D^i_v$. Observe that for $\ell\in\{0,1\}$, in the construction in
Case ($\ell$), $\outdeg_{Q_i}(v)=\ell$ and $v$ has outdegree at most
$d-\ell$ in each subtree in $\D^i_v$. In the construction in Case 2,
$\outdeg_{Q_i}(s)=1$ and $v$ has outdegree at most $d-1$ in each
subtree in $\D^i_v$. Thus $D^i_j\cup Q_i$ has outdegree at most
$d$. By construction, $D_j^i\cup Q_i$ is connected and contains $r$.

Observe that every edge of $T$ is in some subtree $C_j\cup H$ (where
$1\leq j\leq x$) or some subtree $D^i_j\cup Q_i$ (where $i\in\{1,2\}$
and $1\leq j\leq y$). Hence $$\SET{C_j\cup H:1\leq j\leq
  x}\cup\SET{D^i_j\cup Q_i:i\in\{1,2\},1\leq j\leq y}$$ is an
outdegree-$d$ rooted covering of $T$. Therefore
\begin{align*}
  \pi(\Delta,d,k) & \leq x+2y=
  \begin{cases}
    \CEIL{\frac{d-1}{d-2}\pi-\frac{2}{d-2}\FLOORFRAC{\pi}{d(d-1)}}
    & \text{if }t<(d-1)(d-2)\\
    \CEIL{\frac{d}{d-1}\pi+\CEILFRAC{\pi}{d(d-1)}}
    & \text{if }t\geq(d-1)(d-2)\enspace,\\
  \end{cases}
\end{align*}
as desired.
\end{proof}

\begin{proof}[Proof of Lower Bound in \thmref{RootedPathwidth}]
  To prove a lower bound on $\pi(\Delta,d,k)$ we construct an
  outdegree-$\Delta$ rooted tree with pathwidth $k$ that requires many
  subtrees in every outdegree-$d$ covering. For all integers
  $n_1,\dots,n_k$, where each $n_i\geq\pi(\Delta,d,i)+1$, we construct
  a tree $T\ang{n_1,\dots,n_k}$ with the desired property. (This
  statement is well-defined since we have already proved that
  $\pi(\Delta,d,i)$ is bounded from above.)\ The number of vertices in
  $T\ang{n_1,\dots,n_k}$ increases with the $n_i$. Hence there are, in
  fact, infinitely many such trees. Each tree has a nominated root
  vertex $r$, which has out-degree $\Delta$. Every non-leaf vertex has
  outdegree $\Delta$ or $\Delta-1$.

  The tree $T\ang{n_1,\dots,n_k}$ is constructed recursively as
  follows, starting from the
  path $$P:=(v_{-n},\dots,v_{-1},v_0,v_1,\dots,v_n)\enspace,$$where
  $n:=n_k$. If $k=1$ then, add $\Delta-2$ leaf vertices adjacent to
  $v_0$, and for each vertex $v_i$ in $P$ with $1\leq |i|\leq n$, add
  $\Delta-1$ leaf vertices adjacent to $v_i$. If $k\geq2$ then,
  connect $v_0$ to the root vertex in each of $\Delta-2$ copies of
  $T\langle n_1,\dots,n_{k-1}\rangle$, and for each vertex $v_i$ in
  $P$ with $1\leq |i|\leq n$, connect $v_i$ to the root vertex in each
  of $\Delta-1$ copies of $T\langle n_1,\dots,n_{k-1}\rangle$. Root
  $T\ang{n_1,\dots,n_k}$ at $r:=v_0$. Thus $v_{-n}$ and $v_n$ have
  outdegree $\Delta-1$, and every other vertex $v_i$ has outdegree
  $\Delta$. By construction, $T\ang{n_1,\dots,n_k}$ has pathwidth $k$.


  By the definition of $\pi$, $T\ang{n_1,\dots,n_k}$ has an
  outdegree-$d$ rooted covering
  $\C=\{F_1,\dots,F_{\pi(\Delta,d,k)}\}$, and $r$ is in each $F_i$.
  We classify these subtrees depending on which edges in $F_i\cap P$
  are incident to $r$.  Let $\C^{++}$ be the set of subtrees
  $F_i\in\C$ such that the edges $rv_{-1}$ and $rv_1$ are both in
  $F_i\cap P$.  Let $\C^{+-}$ be the set of subtrees $F_i\in\C$ such
  that the edge $rv_{-1}$ is the only edge incident to $r$ in $F_i\cap
  P$.  Let $\C^{-+}$ be the set of subtrees $F_i\in\C$ such that the
  edge $rv_{1}$ is the only edge incident to $r$ in $F_i\cap P$.  Let
  $\C^{--}$ be the set of subtrees $F_i\in\C$ such that the edges
  $rv_{-1}$ and $rv_1$ are both not in $F_i\cap P$.  Hence
  \begin{equation}
    \eqnlabel{CCCC}
    |\C^{++}|+|\C^{+-}|+|\C^{-+}|+|\C^{--}|\;=\;\pi(\Delta,d,k)\enspace.
  \end{equation}

  For each subtree $F_i\in\C$, let $F_{i,1},\dots,F_{i,s_i}$ be the
  component subtrees of $F_i-V(P)$. Let $$\F:=\{F_{i,j}:1\leq
  i\leq\pi(\Delta,d,k),1\leq j\leq s_i\}\enspace.$$ Each subtree
  $F_{i,j}\in\F$ is contained in exactly one copy of
  $T\ang{n_1,\dots,n_{k-1}}$ in $T\ang{n_1,\dots,n_k}$. Consider a
  copy $T'$ of $T\ang{n_1,\dots,n_{k-1}}$.  Say $x$ is the root of
  $T'$, and $v_\ell$ is the neighbour of $x$ in $P$.  We say that
  $v_{\ell}$ is the \emph{attachment point} of $T'$ and of each
  subtree $F_{i,j}\in \F$ that is contained in $T'$.  Since $r$ is in
  $F_i$, the path between every vertex in $F_{i,j}$ and $r$ is in
  $F_i$.  This path includes $x$, which is thus in each $F_{i,j}$.
  Since $F_i$ has outdegree at most $d$, each $F_{i,j}$ has outdegree
  at most $d$.  Thus the set of subtrees in \F\ that are contained in
  $T'$ form an outdegree-$d$ rooted covering of $T'$.  Let
  $\pi:=\pi(\Delta,d,k-1) $. By induction, at least $\pi$ subtrees in
  \F\ are contained in $T'$.

  Now partition the subtrees in \F\ according to their attachment
  point in $P$.  Let $\F^{0}$ be the set of subtrees in \F\ whose
  attachment point is $v_0$.  Let $\F^{+}$ be the set of subtrees in
  \F\ whose attachment point is $v_i$ for some $i\in\{1,\dots,n\}$.
  Let $\F^{-}$ be the set of subtrees in \F\ whose attachment point is
  $v_i$ for some $i\in\{-n,\dots,-1\}$.

  There are $\Delta-2$ copies of $T\ang{n_1,\dots,n_{k-1}}$ that
  attach at $v_0$, each of which contain at least $\pi$ subtrees in
  \F. Thus $|\F^{0}|\geq\pi\cdot(\Delta-2)$. For each $F_i\in\C^{++}$,
  since $v_0$ has outdegree $2$ in $F_i\cap P$ and outdegree at most
  $d$ in $F_i$, there are at most $d-2$ component subtrees of $F_i-P$
  that are in $\F^{0}$. Similarly, for each
  $F_i\in\C^{-+}\cup\C^{+-}$, since $v_0$ has outdegree $1$ in
  $F_i\cap P$, there are at most $d-1$ component subtrees of $F_i-P$
  that are in $\F^{0}$. Finally, for each $F_i\in\C^{--}$, there are
  at most $d$ component subtrees of $F_i-P$ that are in
  $\F^{0}$. Hence
  \begin{equation}
    \eqnlabel{middle}
    \pi\cdot(\Delta-2)\leq |\F^{0}|
    \leq (d-2)\cdot|\C^{++}|
    +(d-1)\cdot\big(|\C^{-+}|+|\C^{+-}|\big)
    +d\cdot|\C^{--}|\enspace.
  \end{equation}

  There are $\Delta-1$ copies of $T\ang{n_1,\dots,n_{k-1}}$ that
  attach at $v_i$ for $i\in\{1,\dots,n\}$, each of which contain at
  least $\pi$ subtrees in \F. Thus
  \begin{equation}
    \eqnlabel{Fplus}
    |\F^{+}|\geq\pi\cdot(\Delta-1)n\enspace.
  \end{equation}
  For each $F_i\in\C^{++}\cup\C^{-+}$, the subtree consisting of those
  edges in $F_i$ whose source endpoint is in $\{v_1,\dots,v_n\}$ is an
  outdegree-$d$ caterpillar rooted at $v_1$ whose spine is contained
  in $\{v_1,\dots,v_n\}$. Every outdegree-$d$ caterpillar rooted at
  the endpoint of its spine and whose spine has at most $n$ vertices
  has at most $(d-1)n+1$ leaves. Thus there are at most $(d-1)n+1$
  component subtrees of $F_i-P$ that are in $F^+$. For each
  $F_i\in\C^{--}\cup\C^{+-}$, no component subtrees of $F_i-P$ are in
  $\F^+$. Thus by \eqnref{Fplus},
  \begin{equation*}
    \pi\cdot(\Delta-1)n \leq |\F^{+}|
    \leq \big((d-1)n+1\big)\cdot\big(|\C^{++}|+|\C^{-+}|\big)\enspace.
  \end{equation*}
  Hence
  \begin{align*}
    \pi\cdot(\Delta-1) -(d-1)\cdot\big(|\C^{++}|+|\C^{-+}|\big) & \leq
    \frac{1}{n}\cdot\big(|\C^{++}|+|\C^{-+}|\big)\enspace.
  \end{align*}
  By \eqnref{CCCC} and since $n=n_k>\pi(\Delta,d,k)$,
  \begin{align*}
    \pi\cdot(\Delta-1)-(d-1)\cdot\big(|\C^{++}|+|\C^{-+}|\big) \leq
    \FLOOR{\frac{\pi(\Delta,d,k)}{n}}=0\enspace.
  \end{align*}
  Thus
  \begin{equation}
    \eqnlabel{right}
    \pi\cdot(\Delta-1) \leq (d-1)\cdot\big(|\C^{++}|+|\C^{-+}|\big)\enspace.
  \end{equation}
  By symmetry,
  \begin{equation}
    \eqnlabel{left}
    \pi\cdot(\Delta-1) \leq (d-1)\cdot\big(|\C^{++}|+|\C^{+-}|\big)\enspace.
  \end{equation}
  Observe that \threeeqnref{middle}{right}{left} define an integer
  linear program with unknowns
  $|\C^{++}|$, $|\C^{+-}|$, $|\C^{-+}|$, $|\C^{--}|$. The solution of this
  integer linear program is given in \lemref{IntegerProgram}, where
  \begin{align*}
    x&=|\C^{++}|,\; y_1=|\C^{+-}|,\; y_2=|\C^{-+}|,\; z=|\C^{--}|,\\
    A&=\pi\cdot(\Delta-2),\text{ and }\\
    B&=\pi\cdot(\Delta-1)\enspace.
  \end{align*}
  Since $\Delta\geq d$ we have $(d-2)B\leq(d-1)A$, and
  \lemref{IntegerProgram} is applicable. \Eqnref{CCCC} and
  \lemref{IntegerProgram} imply that
  \begin{align*}
    \pi(\Delta,d,k)&=|\C^{++}|+|\C^{+-}|+|\C^{-+}|+|\C^{--}|\\
    &\geq\CEIL{\frac{\Delta-2}{d}\cdot\pi
      +\frac{2}{d}\CEIL{\frac{\Delta-1}{d-1}\cdot\pi}}\enspace.
  \end{align*}
  This complete the proof of the lower bound when $\Delta\geq d+2$.

  For $\Delta=d+1$ the above analysis can be slightly improved as
  follows. Observe that for each $F_i\in\C^{--}$, there are at most
  $d-1$ component subtrees of $F_i-P$ that are in $\F^{0}$ (rather
  than $d$ component subtrees in the general case). Hence
  \eqnref{middle} can be strengthened to:
  \begin{equation}
    \eqnlabel{MIDDLE}
    \pi\cdot(\Delta-2)\leq |\F^{0}|
    \leq (d-2)\cdot|\C^{++}|
    +(d-1)\cdot\big(|\C^{-+}|+|\C^{+-}|+|\C^{--}|\big)\enspace.
  \end{equation}
  Now consider the integer linear program with unknowns
  $|\C^{++}|,|\C^{+-}|,|\C^{-+}|,|\C^{--}|$ that is defined in
  \threeeqnref{right}{left}{MIDDLE}. The solution of this integer
  linear program is given in \lemref{AnotherIntegerProgram}, where
  \begin{align*}
    x=|\C^{++}|,\; y_1=|\C^{+-}|,\; y_2=|\C^{-+}|,\;
    z=|\C^{--}|,\text{ and } A=\pi\enspace.
  \end{align*}
  \Eqnref{CCCC} and \lemref{AnotherIntegerProgram} imply that
  \begin{align*}
    \pi(\Delta,d,k)&=|\C^{++}|+|\C^{+-}|+|\C^{-+}|+|\C^{--}|\\
    &\geq
    \begin{cases}
      \CEIL{\frac{d-1}{d-2}\cdot\pi-\frac{2}{d-2}\FLOORFRAC{\pi}{d(d-1)}}
      & \text{if }t<(d-1)(d-2)\\
      \CEIL{\frac{d}{d-1}\cdot\pi+\CEILFRAC{\pi}{d(d-1)}}
      & \text{if }t\geq(d-1)(d-2)\enspace.\\
    \end{cases}
  \end{align*}
  as desired in the case that $\Delta\geq d+1$.
\end{proof}

This completes the proof of \thmref{RootedPathwidth}. We can estimate
the recurrence in \thmref{RootedPathwidth} as follows.
  
\begin{corollary}
  For all integers $\Delta\geq d\geq 2$ and $k\geq0$,
$$\pi(\Delta,d,k)  \leq 
\CEIL{\frac{\Delta-2}{d}+\frac{2}{d}\CEIL{\frac{\Delta-1}{d-1}}}^k\enspace,$$
with equality whenever $\Delta\equiv d^2-2d+2\pmod{d^2-d}$.
\end{corollary}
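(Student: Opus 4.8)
The plan is to prove the inequality by induction on $k$, using as the sole combinatorial tool the elementary fact that $\CEIL{xm}\leq\CEIL{x}m$ for every real $x\geq0$ and positive integer $m$ (indeed $xm\leq\CEIL{x}m$ and the right-hand side is an integer), together with its equality case: $\CEIL{xm}=\CEIL{x}m$ exactly when $(\CEIL{x}-x)m<1$, in particular whenever $x$ is an integer. Put $c:=\CEIL{\frac{\Delta-2}{d}+\frac{2}{d}\CEIL{\frac{\Delta-1}{d-1}}}$, so $c\geq1$. The base case $k=0$ holds since $\pi(\Delta,d,0)=1$, and for the inductive step it suffices to show $\pi(\Delta,d,k)\leq c\cdot\pi(\Delta,d,k-1)$, because then $\pi(\Delta,d,k)\leq c\cdot c^{k-1}=c^k$.

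With $m:=\pi(\Delta,d,k-1)$, I would treat the three regimes of \thmref{RootedPathwidth} in turn. If $\Delta=d$ there is nothing to do, as $\pi(\Delta,d,k)=1=c$. If $\Delta\geq d+2$, apply the elementary fact twice to the recurrence: first $\CEIL{\frac{\Delta-1}{d-1}m}\leq\CEIL{\frac{\Delta-1}{d-1}}m$, so the argument of the outer ceiling is at most $\big(\frac{\Delta-2}{d}+\frac{2}{d}\CEIL{\frac{\Delta-1}{d-1}}\big)m$, and a second application yields $\pi(\Delta,d,k)\leq cm$. The one case requiring genuine computation is $\Delta=d+1$ (so $d\geq3$): here $\CEIL{\frac{d}{d-1}}=2$, hence $c=\CEIL{\frac{d+3}{d}}=2$; writing $m=d(d-1)q+t$ with $0\leq t<d(d-1)$, one unpacks the two branches of the piecewise recurrence to $q(d^2+1)+\CEIL{\frac{(d-1)t}{d-2}}$ when $t<(d-1)(d-2)$ (using the factorisation $d^3-2d^2+d-2=(d-2)(d^2+1)$) and to $q(d^2+1)+1+\CEIL{\frac{dt}{d-1}}$ when $t\geq(d-1)(d-2)$, and then bounds each above by $2m=2d(d-1)q+2t$ using $d^2-2d-1\geq0$ together with $\CEIL{\frac{(d-1)t}{d-2}}\leq2t$ and $\CEIL{\frac{dt}{d-1}}\leq2t-1$ on the relevant range of $t$. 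The degenerate case $d=2$ of the corollary follows the same way from the degree-$2$ recurrence, for which $c=\CEIL{\frac{3\Delta-4}{2}}$.

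For the equality statement, the idea is that the congruence $\Delta\equiv d^2-2d+2\pmod{d^2-d}$ is tailored to make every ceiling in the recurrence tight. Reducing the modulus $d^2-d$ modulo $d-1$ shows $(d-1)\mid\Delta-1$, so $\CEIL{\frac{\Delta-1}{d-1}}=\frac{\Delta-1}{d-1}$ and the inner ceiling in the $\Delta\geq d+2$ recurrence is attained exactly for every $m$; one then wants the outer combination $\frac{\Delta-2}{d}+\frac{2(\Delta-1)}{d(d-1)}$ to be integral and equal to $c$, after which the recurrence reads $\pi(\Delta,d,k)=c\cdot\pi(\Delta,d,k-1)$ and the induction gives $\pi(\Delta,d,k)=c^k$. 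Checking that the stated residue class is precisely the integrality condition under which all ceilings collapse (and, for $\Delta=d+1$, that the floor/ceiling terms collapse likewise) is the step that demands the most care.

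Overall I expect the main obstacles to be the $\Delta=d+1$ branch of the upper bound — the only place where the argument is more than a one-line use of $\CEIL{xm}\leq\CEIL{x}m$ and where the piecewise formula has to be expanded and each case bounded by hand — and the equality case, where one must match the residue class to the exact conditions making the recurrence purely multiplicative; the $\Delta\geq d+2$ part of the upper bound and the base case are routine.
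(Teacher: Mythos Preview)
Your proposal is correct and follows the same inductive strategy as the paper: set $c$ equal to the base of the exponent, use $\pi(\Delta,d,0)=1$, and derive $\pi(\Delta,d,k)\leq c\cdot\pi(\Delta,d,k-1)$ from the recurrence via the elementary bound $\lceil xm\rceil\leq\lceil x\rceil m$; for the equality case, observe that the congruence $\Delta\equiv d^2-2d+2\pmod{d^2-d}$ forces both $\frac{\Delta-1}{d-1}$ and $\frac{\Delta-2}{d}$ to be integers, collapsing every ceiling so that the recurrence becomes exactly multiplicative.

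Where your argument differs is in thoroughness. The paper's proof applies the ceiling inequality twice to the $\Delta\geq d+2$ branch of \thmref{RootedPathwidth} and leaves it at that; you additionally work out the $\Delta=d+1$ branch by hand, writing $m=d(d-1)q+t$, using the factorisation $d^3-2d^2+d-2=(d-2)(d^2+1)$, and bounding each piece of the piecewise formula by $2m$. This is genuine extra work that the paper omits, and it is needed for the corollary as stated to cover all $\Delta\geq d$. Your observation that the congruence never yields $\Delta=d+1$ (so the equality clause only needs the $\Delta\geq d+2$ recurrence) is also implicit in the paper but worth making explicit. One minor caution: the corollary as printed allows $d=2$, while \thmref{RootedPathwidth} is stated only for $d\geq3$, so your remark about a separate degree-$2$ recurrence is addressing a boundary case the paper does not actually supply.
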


\begin{proof}
  It is easily proved that $\Ceil{\frac{ab}{c}}\leq
  a\Ceil{\frac{b}{c}}$ for all integers $a,b,c\geq1$. Applying this
  observation twice, \thmref{RootedPathwidth} implies that
  \begin{align*}
    \pi(\Delta,d,k) \leq \pi(\Delta,d,k-1)\cdot
    \CEIL{\frac{\Delta-2}{d}+\frac{2}{d}\CEIL{\frac{\Delta-1}{d-1}}}\enspace.
  \end{align*}
  Since $\pi(\Delta,d,0)=1$,
  \begin{equation}
    \eqnlabel{RootedPathwidthEstimate}
    \pi(\Delta,d,k)  \leq 
    \CEIL{\frac{\Delta-2}{d}+\frac{2}{d}\CEIL{\frac{\Delta-1}{d-1}}}^k\enspace.
  \end{equation}
  Now assume that $\Delta\equiv d^2-2d+2\pmod{d^2-d}$. Then
  $\frac{\Delta-1}{d-1}\in\mathbb{Z}$ and
  $\frac{\Delta-2}{d}\in\mathbb{Z}$. (In fact, the converse holds.)\
  Thus \thmref{RootedPathwidth} implies that
  \begin{align*}
    \pi(\Delta,d,k) & =\pi(\Delta,d,k-1)\cdot
    \BRACKET{\frac{\Delta-2}{d}+\frac{2}{d}\cdot \frac{\Delta-1}{d-1}}
    \enspace.
  \end{align*}
  Thus equality in \eqnref{RootedPathwidthEstimate} holds since
  $\pi(\Delta,d,0)=1$.
\end{proof}

\section{Pathwidth and Unrooted Coverings}
\seclabel{UnrootedPathwidth}

This section extends the results in \secref{RootedPathwidth} to the
unrooted setting.

\begin{theorem}
  \thmlabel{Pathwidth} For all integers $\Delta\geq d\geq3$, every
  degree-$\Delta$ tree $T$ with pathwidth $k$ satisfies
  $\mincover[d](T)\leq t$,
  where $$t:=\CEIL{\frac{\Delta-2}{d-2}\cdot\pi}\;\;\;\text{and}\;\;\;\pi:=\pi(\Delta-1,d-1,k-1)\enspace.$$
  Moreover, there are infinitely many degree-$\Delta$ trees $T$ with
  pathwidth $k$ such that $\mincover[d](T)=t$.
\end{theorem}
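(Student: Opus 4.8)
The plan is to prove the upper bound by a construction that generalises the proof of \thmref{Caterpillar} (which is exactly the case $k=1$, since $\pi(\Delta-1,d-1,0)=1$), and to prove the matching lower bound by building trees out of copies of the extremal rooted trees produced in \thmref{RootedPathwidth}.

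\emph{Upper bound.} Given a degree-$\Delta$ tree $T$ of pathwidth $k$, I would use the recursive definition of pathwidth to fix a path $P$ of $T$ such that $T-V(P)$ has pathwidth at most $k-1$. For $v\in V(P)$ let $T_v$ be the component of $T-E(P)$ containing $v$; since $T$ is a tree, $V(T_v)\cap V(P)=\{v\}$, so every component of $T_v-v$ lies inside $T-V(P)$ and hence has pathwidth at most $k-1$. Rooting such a component $C$ at its unique vertex adjacent to $v$ turns it into an outdegree-$(\Delta-1)$ rooted tree of pathwidth at most $k-1$, so by the definition of $\pi:=\pi(\Delta-1,d-1,k-1)$ (finite by \thmref{RootedPathwidth}) it has an outdegree-$(d-1)$ rooted covering with at most $\pi$ subtrees. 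Now apply \lemref{RootedMinCover} to $(T_v,v)$ with the binding function $g$ given by $g(v):=d-\deg_P(v)$ and $g(x):=d-1$ for $x\neq v$: since $v$ has at most $\Delta-\deg_P(v)$ neighbours in $T_v$ and each associated recursive value is at most $\pi$, we get $\rmc[g](T_v,v)\le\MAXM{\pi,\CEIL{\frac{(\Delta-\deg_P(v))\pi}{d-\deg_P(v)}}}$. Because $\frac{\Delta-s}{d-s}$ is non-decreasing in $s$ when $\Delta\ge d$ and $\deg_P(v)\le2\le d-1$, this is at most $t=\CEIL{\frac{\Delta-2}{d-2}\pi}$. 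So $T_v$ has an outdegree-$g$ rooted covering $\C_v$ with $|\C_v|\le t$ in which $v$ has outdegree at most $d-\deg_P(v)$ in every subtree. Finally, for $i\in\{1,\dots,t\}$ set $X_i:=P\cup\bigcup_{v\in V(P)}X_{v,i}$, where $X_{v,i}$ is the $i$-th subtree of $\C_v$, or just $\{v\}$ if $|\C_v|<i$. One checks that each $X_i$ is a connected subtree containing $P$, with degree at most $\deg_P(v)+(d-\deg_P(v))=d$ at each $v\in V(P)$ and at most $1+(d-1)=d$ at every other vertex, and that $\SET{X_1,\dots,X_t}$ covers every edge of $T$ (edges of $P$ lie in all $X_i$; every other edge lies in some $T_v$ and is covered by $\C_v$). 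Hence $\mincover[d](T)\le t$.

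\emph{Lower bound.} For $k=1$ this is the second half of \thmref{Caterpillar}, so assume $k\ge2$. By the definition of $\pi$ and \thmref{RootedPathwidth} there is an outdegree-$(\Delta-1)$ rooted tree $T'$ of pathwidth $k-1$ with $\rmc[d-1](T')=\pi$. For each $n$ build $T_n$ from a path $(u,x_1,\dots,x_n,w)$ by attaching, to each $x_i$, $\Delta-2$ disjoint copies of $T'$, each joined to $x_i$ by an edge from $x_i$ to the root of the copy. Then $T_n$ is a degree-$\Delta$ tree, and since $T_n$ minus that path is a disjoint union of copies of $T'$ (pathwidth $k-1$) while any single path of $T_n$ leaves at least one copy of $T'$ untouched, $T_n$ has pathwidth exactly $k$; varying $n$ gives infinitely many such trees. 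Now suppose some degree-$d$ covering $\C$ of $T_n$ has $|\C|\le t-1$; by \lemref{Growing} we may assume all members of $\C$ share a vertex $z$. For every copy $T''$ with $z\notin V(T'')$, any $F\in\C$ sharing an edge with $T''$ must enter $T''$ through its root $\rho$ and therefore contains the edge joining $\rho$ to the spine, so $F\cap T''$, rooted at $\rho$, has outdegree at most $d-1$; hence the non-empty intersections $F\cap T''$ form an outdegree-$(d-1)$ rooted covering of $(T'',\rho)\cong(T',\text{root})$, which forces at least $\pi$ members of $\C$ to share an edge with $T''$. Counting incident pairs (subtree, copy) thus gives at least $\pi\bracket{n(\Delta-2)-1}$ of them. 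On the other hand each $F\in\C$ meets the spine in a subpath, at whose at most two ``turning'' vertices $F$ dips into at most $d$ copies each, while at every through-vertex $F$ uses two spine edges and so dips into at most $d-2$ copies; hence $F$ shares an edge with at most $n(d-2)+O(1)$ copies, so the number of incident pairs is at most $(t-1)\bracket{n(d-2)+O(1)}$. Since $t=\CEIL{\frac{\Delta-2}{d-2}\pi}$ gives $(t-1)(d-2)<\pi(\Delta-2)$, these two estimates contradict each other once $n$ is large enough. Therefore $\mincover[d](T_n)\ge t$, and combined with the upper bound, $\mincover[d](T_n)=t$.

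The routine parts are verifying that $T_n$ has pathwidth exactly $k$ and that the $X_i$ really are connected degree-$d$ subtrees that together cover $T$. The crux is the lower-bound counting: it succeeds only because a single covering subtree can spend its degree budget on ``extra'' copies at just the (at most two) turning vertices of its spine-subpath and must otherwise surrender two units of degree to the spine at each through-vertex, so that summing the per-copy demand of $\pi$ subtrees over the roughly $n(\Delta-2)$ copies outstrips $t-1$ subtrees; the only losses in the argument (the $-1$ for the copy containing $z$, and the bounded slack per subtree) are absorbed in the limit $n\to\infty$.
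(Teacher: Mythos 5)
Your proof is correct. The upper bound is essentially the paper's argument verbatim: the same path $P$ with $T-V(P)$ of pathwidth $k-1$, the same rooted pieces $T_v$ with binding function $d-\deg_P(v)$ at $v$ and $d-1$ elsewhere, the same appeal to \lemref{RootedMinCover} together with the monotonicity of $\frac{\Delta-s}{d-s}$ in $s$, and the same assembly into the subtrees $X_i\cup P$. The lower bound uses the same extremal construction (a long spine with $\Delta-2$ copies of the extremal rooted tree from \thmref{RootedPathwidth} attached at each internal spine vertex), but your counting differs from the paper's in two respects. First, where the paper takes the covering subtrees to be maximal (via \lemref{Growing}) and uses maximality to force the attaching edge into every subtree meeting a given copy, you derive the same conclusion directly from the common vertex $z$: the path from $z$ to any edge of a copy not containing $z$ must traverse that copy's attaching edge, so you need only exclude the single copy containing $z$ rather than normalising the common vertex to the centre of the spine. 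Second, where the paper shows that every spine vertex is ``hit'' by some subtree using at least $d-1$ of its attaching edges and then pigeonholes $2n+1>2(t-1)$ hits to find one subtree hitting three spine vertices (an immediate degree contradiction), you double-count (subtree, copy) incidences --- at least $\pi\bigl(n(\Delta-2)-1\bigr)$ from the per-copy demand versus at most $(t-1)\bigl(n(d-2)+O(1)\bigr)$ from the per-subtree degree budget along its spine subpath --- and compare leading coefficients, using $(t-1)(d-2)<\pi(\Delta-2)$. Both arguments are sound; the paper's gives the contradiction already for $n\geq t-1$, whereas yours requires $n$ somewhat larger, which is harmless since only infinitely many examples are needed.
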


\begin{proof} First we prove the upper bound.  $T$ has a path $P$ such
  that $T-V(P)$ has pathwidth $k-1$. Consider a vertex $v$ of $P$. Let
  $T_v$ be the subtree of $T-E(P)$ that contains $v$, where $T_v$ is
  rooted at $v$. Let $f$ be the binding function of $T_v$ defined by
  $f(v):=d-\deg_P(v)$ and $f(x):=d-1$ for every other vertex $x$. Each
  component $U$ of $T_v-v$, rooted at the neighbour of $v$, has
  outdegree at most $\Delta-1$ and pathwidth at most $k-1$. Thus
  $\rmc[d-1](U)\leq\pi$. Since $v$ has outdegree at most
  $\Delta-\deg_P(v)$ in $T_v$, by \lemref{RootedMinCover} applied to
  $f$, $T_v$ has an outdegree-$(d-1)$ rooted covering $\C_v$, such
  that $v$ has outdegree at most $d-\deg_P(v)$ in each subtree in
  $\C_v$, and
$$|\C_v|\leq\CEIL{\frac{\Delta-\deg_P(v)}{d-\deg_P(v)}\cdot\pi}
\leq t\enspace,$$ where the last inequality holds since
$\deg_P(v)\in\{0,1,2\}$. For $i\in\{1,\dots,t\}$, let $X_i$ be the
union, taken over every vertex $v$ in $P$, of the $i$-th subtree in
$\C_v$ (if it exists). Thus every vertex $v$ in $P$ has degree at most
$d-\deg_P(v)$ in $X_i$, and $v$ has degree at most $d$ in $X_i\cup
P$. Since every vertex not in $P$ has outdegree at most $d-1$ in each
$X_i$, every vertex not in $P$ has degree at most $d$ in each
$X_i$. Every edge of $T$ is in some $X_i\cup P$. Hence $\SET{X_i\cup
  P:1\leq i\leq t}$ is the desired degree-$d$ covering of $T$.

Now we prove the lower bound. Let $X$ be the outdegree-$(\Delta-1)$
rooted tree with pathwidth $k-1$, such that $\rmc[d-1](X)=\pi$. (See
the proof of the lower bound in \thmref{RootedPathwidth} for the
construction of $X$.)\ Let $n\geq t-1$. Let $T$ be the tree obtained
from the path $P=(v_{-n-1},v_{-n},\dots,v_{n},v_{n+1})$ as
follows. For each $i\in\{-n,\dots,n\}$, add $\Delta-2$ copies of $X$
whose roots are adjacent to $v_i$; thus $v_i$ has degree
$\Delta$. Hence $T$ is a degree-$\Delta$ tree with pathwidth $k$.

Suppose on the contrary that $T$ can be covered by $t-1$ degree-$d$
subtrees.  By \lemref{Growing}, $T$ has a degree-$d$ covering by $t-1$
degree-$d$ maximal subtrees $F_1,\dots,F_{t-1}$ that have a vertex $r$
in common. Root $T$ at $r$. Define $f(r):=d$ and $f(x):=d-1$ for every
other vertex $x$. Thus $F_1,\dots,F_{t-1}$ is a degree-$f$ covering of
the rooted tree $(T,r)$, and $\rmc[d](T)=\rmc[f](T,r)$.
\lemref{RootedMinCover} provides a recursive formula for
$\rmc[f](T,r)$, which implies (by the symmetry of $T$) that without
loss of generality, $r=v_0$. In particular, for each copy of $X$
rooted at some vertex $w$, every subtree in the induced covering of
$X$ contains $w$.

Fix $i\in\{-n,\dots,n\}$. Let $E_i$ be the set of $\Delta-2$ edges in
$T-E(P)$ incident to $v_i$. For each edge $v_iw\in E_i$, at least
$\pi$ of the subtrees $F_1,\dots,F_{t-1}$ intersect the copy of $X$
rooted at $w$, and each such subtree contains $w$. Since $f(w)=d-1$
and each such subtree $F_j$ is maximal, the edge $vw_i$ is also in
$F_j$. Thus $\sum_j|F_j\cap E_i|\geq(\Delta-2)\pi$.  Say a subtree
$F_j$ \emph{hits} $v_i$ if $|F_j\cap E_i|\geq d-1$. If $v_i$ is hit by
no subtree, then $|F_j\cap E_i|\leq d-2$ for all $j$, implying
$$(t-1)(d-2)\geq\sum_{j=1}^{t-1}|F_j\cap E_i|\geq(\Delta-2)\pi\enspace.$$
This is a contradiction since $t<\frac{\Delta-2}{d-2}\pi+1$. Thus
$v_i$ is hit by at least one subtree.

Hence the total number of hits is at least $2n+1$. Since
$2n+1>2(t-1)$, some subtree $F_j$ hits at least three vertices, say
$v_a,v_b,v_c$ where $-n\leq a<b<c\leq n$. Since $F_j$ is connected,
$F_j$ contains the path $(v_a,v_{a+1},\dots,v_c)$. Thus $v_b$ has
degree at least $(d-1)+2$ in $F_j$, which contradicts the assumption
that $F_j$ has maximum degree at most $d$. Therefore at least $t$
subtrees are needed in every covering of $T$ by degree-$d$ subtrees.
\end{proof}

\thmref{Pathwidth} says that trees with bounded maximum degree and
bounded pathwidth admit coverings by a bounded number of degree-$d$
subtrees. In the case of $d=3$, we now prove a converse result for a
large class of trees.

\begin{proposition}
  \proplabel{PathwidthUpperBound} Let $T$ be a tree in which every
  non-leaf vertex has degree at least $4$. Then $T$ has pathwidth at
  most $\mc[3](T)$.
\end{proposition}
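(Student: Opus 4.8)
The plan is to move into the rooted setting of \secref{TreeCovering}. By \lemref{NonRootedMinCover} I may fix a vertex $r$ with $\mc[3](T)=\rmc[g](T,r)$, where $g(r):=3$ and $g(x):=2$ for $x\neq r$, so it suffices to prove $\mathrm{pw}(T)\le\rmc[g](T,r)$. I would obtain this by running the recurrence of \lemref{RootedMinCover} alongside the recursive definition of pathwidth. The engine is the following self-strengthening statement, which I would prove first.

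\emph{Sub-lemma.} If $(S,z)$ is a rooted tree in which every non-leaf vertex other than $z$ has degree at least $4$, then $\mathrm{pw}(S)\le\rmc[2](S,z)$. I would prove this by induction on $|V(S)|$: peel off the path $P$ that starts at $z$ and at every vertex continues through the child whose value of $\rmc[2]$ is largest. Because non-leaves have degree $\ge 4$, each vertex of $S$ on $P$ is either an endpoint of $P$ (a leaf of $S$) or has at least two subtrees of $S$ hanging off it that avoid $P$ (one incident edge points towards $z$, one continues $P$, at least two remain); thus every component $S'$ of $S-V(P)$ is one such side subtree, rooted at the vertex $z'$ where it meets $P$. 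The key claim is $\rmc[2](S',z')\le\rmc[2](S,z)-1$: running the recurrence of \lemref{RootedMinCover} down $P$ to the parent $u$ of $z'$, at $u$ the term $\bigl\lceil\frac12\sum c_i\bigr\rceil$ strictly exceeds $\rmc[2](S',z')$, because among $u$'s children $P$ continues through one whose value is $\ge\rmc[2](S',z')$ while at least one further side subtree contributes, so $\sum c_i\ge 2\rmc[2](S',z')+1$ (and $\rmc[2]$ is monotone down $P$). Hence $\mathrm{pw}(S')\le\rmc[2](S',z')\le\rmc[2](S,z)-1$ by induction, and $\mathrm{pw}(S)\le 1+\max_{S'}\mathrm{pw}(S')\le\rmc[2](S,z)$. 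The hypothesis is inherited by every $S'$ (its non-leaves other than $z'$ are non-leaves of $S$), and nothing is assumed about $z$.

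For the theorem itself I would argue as follows. If $r$ is a leaf of $T$ it has one child $v_1$, and $\rmc[g](T,r)=\rmc[2](T-r,v_1)$; the sub-lemma applied to $(T-r,v_1)$ (legal, since the non-leaves of $T-r$ other than $v_1$ are non-leaves of $T$) gives $\mathrm{pw}(T)=\max(\mathrm{pw}(T-r),1)\le\rmc[2](T-r,v_1)$. If $r$ is a non-leaf then $\deg_T(r)=m\ge 4$ by hypothesis, so $r$ has $m\ge 4$ children $v_1,\dots,v_m$ with subtrees $T_i$ and values $c_i:=\rmc[2](T_i,v_i)$, sorted as $c_1\ge\cdots\ge c_m$. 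Peel the path $P$ that runs through $r$, descending into $T_1$ and into $T_2$ through heaviest children exactly as in the sub-lemma. The components of $T-V(P)$ are the side subtrees inside $T_1$ and $T_2$ — for which the sub-lemma estimate applies verbatim using $\rmc[g](T,r)\ge c_1=\rmc[2](T_1,v_1)$ — together with the whole subtrees $T_3,\dots,T_m$. For $j\ge3$ the sub-lemma gives $\mathrm{pw}(T_j)\le c_j\le c_3$, and the decisive point is $\rmc[g](T,r)\ge\bigl\lceil\frac13\sum_i c_i\bigr\rceil\ge\bigl\lceil\frac13(c_1+c_2+c_3+c_4)\bigr\rceil\ge c_3+1$, which uses that there are at least four summands (this is precisely where $\deg_T(r)\ge 4$ enters), each at least $1$, with $c_1,c_2\ge c_3$. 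Hence every component of $T-V(P)$ has pathwidth at most $\rmc[g](T,r)-1$, so $\mathrm{pw}(T)\le\rmc[g](T,r)=\mc[3](T)$.

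The step I expect to be the main obstacle is the ``$\ge\rmc[2](S',z')+1$'' estimate in the sub-lemma. One must peel through heaviest children so that the child continuing $P$ is never lighter than a side subtree, then split into the case where it is strictly heavier (the $\max$ term already supplies the gain) and the case where it ties (the averaging term $\lceil\frac12\sum c_i\rceil$ supplies it, because a degree-$\ge 4$ vertex always leaves a second side subtree that pushes the sum up). Checking that this holds uniformly along the peeled path, that the only use of the degree hypothesis is to guarantee ``$\ge 2$ side subtrees'' (respectively ``$m\ge 4$'' at $r$), and that the hypothesis degrades only at the root when passing to side subtrees, is the real content; the remainder is bookkeeping.
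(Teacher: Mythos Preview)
Your approach is sound and the outline goes through, but one wrinkle deserves attention. In the sub-lemma you assert ``nothing is assumed about $z$'', yet the key estimate $\rmc[2](S',z')\le\rmc[2](S,z)-1$ fails when $z$ has exactly two children with equal $\rmc[2]$-values: then $\rmc[2](S,z)$ equals that common value and the side subtree at $z$ violates the inequality. Fortunately this case never arises in your actual invocations --- in every call (initial or recursive) the root is either a leaf or inherits $\ge 3$ children from a degree-$\ge 4$ vertex of $T$ that lost one edge --- so either tighten the sub-lemma hypothesis accordingly, or in the two-child case let the peeled path run both ways from $z$. Also note that ``each at least $1$'' for the $c_i$ relies on the paper's convention $\rmc(K_1)=1$; this is consistent with \propref{RootedCompleteTree} but worth stating.

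That said, your route is quite different from the paper's and much more laborious. The paper argues directly from the covering: take a minimum covering $T_1,\dots,T_c$ by degree-$3$ subtrees (with common vertex, via \lemref{Growing}) and set $S:=\bigcap_i T_i$. Since $S\subseteq T_1$ we have $\Delta(S)\le 3$, and if $\deg_S(v)=3$ then $\deg_T(v)\ge 4$ forces some edge $vw\notin S$, hence $vw\in T_i$ for some $i$ and $\deg_{T_i}(v)\ge 4$, a contradiction; so $S$ is a path. Any component $T_w$ of $T-V(S)$ is attached by an edge missing from at least one $T_i$, so $T_w$ is covered by $\le c-1$ of the subtrees, and induction on $c$ finishes. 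Your argument rebuilds the path from the recurrence of \lemref{RootedMinCover} via heaviest children; the paper simply lets the covering hand you the path. The paper's proof is a few lines; yours trades that elegance for a more mechanical, recurrence-driven argument that makes explicit exactly where the ``degree $\ge 4$'' hypothesis enters.
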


\begin{proof}
  We proceed by induction on $c:=\mc[3](T)$. If $c=1$ then no vertex
  has degree at least 4, and every non-leaf vertex has degree at least
  $4$, implying $T\cong K_2$, which has pathwidth $1$. Now assume that
  $c\geq2$. Fix a covering of $T$ by $c$ degree-3 subtrees
  $T_1,\dots,T_c$. Let $S:=\bigcap_{i=1}^c T_i$. Since $T_1$ has
  maximum degree at most $3$, $S$ has maximum degree at most
  $3$. Suppose that $\deg_S(v)=3$ for some vertex $v$. By assumption,
  $\deg_T(v)\geq4$, implying there is an edge $vw\not\in S$. Since
  $vw\in E(T_i)$ for some $i$, we have $\deg_{T_i}(v)\geq4$, and $T_i$
  is not degree-3. This contradiction proves that $\deg_S(v)\leq 2$
  for every vertex $v$. Since $T_1$ is connected, $S$ is
  connected. Thus $S$ is a path. For each edge $vw$ of $T$ such that
  $v\in V(S)$ and $w\not\in V(S)$, let $T_w$ be the subtree of
  $T-V(S)$ that contains $w$. Since $vw\not\in E(S)$, at most $c-1$ of
  the subtrees $T_1,\dots,T_c$ contain $vw$. Since each such subtree
  is connected, $T_w$ is covered by at most $c-1$ subtrees. That is,
  $\mc[3](T_w)\leq c-1$. By induction, the pathwidth of $T_w$ is at
  most $c-1$. Therefore the pathwidth of $T$ is at most $c$.
\end{proof}

\thmref{Pathwidth} and \propref{PathwidthUpperBound} together say that
pathwidth is the right parameter to study when considering coverings
of trees by a bounded number of degree-3 subtrees.

\section{Coverings of General Graphs}
\seclabel{General}

This section considers coverings of general graphs by connected
subgraphs of bounded degree. 

A \emph{connected vertex cover} of a
graph $G$ is a connected subgraph $H$ of $G$ such that every edge of
$G$ has at least one endpoint in $H$; that is,
$E(G-V(H))=\emptyset$. For algorithmic aspects of connected vertex
covers, see \citep{MRR08,EGM-JDA10,FD04,GNW-TCS07}.

\begin{lemma}
  \lemlabel{General} Let $H$ be a connected vertex cover of a graph
  $G$. Let $$k:=\Delta(G-E(H))\enspace.$$ Then for every integer
  $d\geq\Delta(H)+1$, there is a covering of $G$
  by $$\CEIL{\frac{k+1}{d-\Delta(H)}}$$ connected degree-$d$
  subgraphs.
\end{lemma}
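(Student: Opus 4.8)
The goal is to cover $G$ by few connected degree-$d$ subgraphs, given a connected vertex cover $H$ with $\Delta(H)\leq d-1$ and with $\Delta(G-E(H))=k$. The idea is to start each covering subgraph with a copy of $H$ itself (which uses up degree at most $\Delta(H)$ at each vertex of $H$), and then distribute the remaining edges of $G$ — all of which are in $G-E(H)$ and, since $H$ is a vertex cover, have at least one endpoint in $V(H)$ — among the copies of $H$ without exceeding degree $d$ anywhere. The number of copies needed is controlled by how many colours suffice for an edge-colouring of $G-E(H)$ in which every vertex of $V(H)$ is incident to at most $d-\Delta(H)$ edges of each colour, and every vertex of $V(G)\setminus V(H)$ is incident to at most $d$ edges of each colour (though the former constraint dominates).

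\textbf{Key steps.} First I would set $H':=G-E(H)$; note every edge of $H'$ is incident to $V(H)$ since $H$ is a connected vertex cover, and $\Delta(H')=k$. Second, I would recall the standard fact (cited in the paper's footnote via \citep{HdW94}, or provable by a short greedy/inductive argument) that the edges of any graph of maximum degree $k$ can be partitioned into $\Ceil{\frac{k}{c}}$ subgraphs each of maximum degree at most $c$, for any $c\geq 1$; applying this with $c:=d-\Delta(H)$ gives a partition $H'=H'_1\cup\cdots\cup H'_m$ with $m=\Ceil{\frac{k}{d-\Delta(H)}}$ and $\Delta(H'_j)\leq d-\Delta(H)$ for each $j$. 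Third, I would form $G_j:=H\cup H'_j$ for $1\leq j\leq m$: each $G_j$ is connected (it contains the connected subgraph $H$, and every edge of $H'_j$ is incident to $V(H)$), and for every vertex $v$ we have $\deg_{G_j}(v)\leq\deg_H(v)+\deg_{H'_j}(v)\leq\Delta(H)+(d-\Delta(H))=d$ when $v\in V(H)$, and $\deg_{G_j}(v)=\deg_{H'_j}(v)\leq k\leq\,$? — here I need to be careful: for $v\notin V(H)$ the bound from $\Delta(H'_j)$ is $d-\Delta(H)\leq d$, so it is fine. Finally, since $E(G)=E(H)\cup E(H')=\bigcup_j E(G_j)$, the family $\{G_1,\dots,G_m\}$ covers $G$.

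\textbf{Reconciling the count.} The clean partition argument gives $m=\Ceil{\frac{k}{d-\Delta(H)}}$, whereas the lemma states $\Ceil{\frac{k+1}{d-\Delta(H)}}$, which is at least as large, so the stated bound follows a fortiori. (The extra "$+1$" presumably comes from a slightly different construction — e.g.\ one in which one colour is reserved or the copies of $H$ are not all identical — but any valid covering of the claimed size, in particular the one above, proves the lemma; I would simply remark that $\Ceil{\frac{k}{d-\Delta(H)}}\leq\Ceil{\frac{k+1}{d-\Delta(H)}}$ and conclude.)

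\textbf{Main obstacle.} The only real content is the edge-partition fact, and the only subtlety is ensuring each $G_j$ is \emph{connected} — this is exactly where the hypothesis that $H$ is a \emph{connected} vertex cover (not merely a vertex cover) is used, together with the observation that every leftover edge touches $V(H)$. Getting the degree bookkeeping at vertices of $V(H)$ right (adding $\deg_H(v)\leq\Delta(H)$ to $\deg_{H'_j}(v)\leq d-\Delta(H)$) is routine; the potential pitfall is forgetting that vertices outside $V(H)$ also need a degree bound, but since $H'_j$ already has maximum degree at most $d-\Delta(H)\leq d$, that case is automatic.
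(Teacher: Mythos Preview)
Your proof is correct and follows the same overall strategy as the paper: partition $E(G)\setminus E(H)$ into a small number of bounded-degree subgraphs, then union each with $H$. The connectedness and degree bookkeeping are exactly as you describe.

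The one difference is the tool used for the edge partition, and this explains the ``$+1$'' you puzzled over. The paper applies Vizing's Theorem to $G-E(H)$, obtaining $k+1$ \emph{matchings}, and then groups these matchings into batches of size $d-\Delta(H)$; this yields $\Ceil{\frac{k+1}{d-\Delta(H)}}$ degree-$(d-\Delta(H))$ subgraphs. You instead invoke the equitable edge-partition fact (the one the paper cites via \cite{HdW94} in its footnote) to get $\Ceil{\frac{k}{d-\Delta(H)}}$ such subgraphs directly. Your route is marginally sharper and, as you note, implies the stated bound a fortiori; the paper's route trades that extra unit for the convenience of citing only Vizing.
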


\begin{proof}
  By Vizing's Theorem \citep{Vizing64} applied to $G-E(H)$, there is a
  partition $\{E_i:1\leq i\leq k+1\}$ of $E(G)-E(H)$, such that each
  $E_i$ is a matching in $G-E(H)$. Grouping the matchings gives a
  partition $\{F_j:1\leq j\leq \Ceil{\frac{k+1}{d-\Delta(H)}}\}$ of
  $E(G)-E(H)$, such that each $F_j$ is a degree-$(d-\Delta(H))$
  subgraph of $G-E(H)$. Thus $H\cup F_j$ is a connected degree-$d$
  subgraph of $G$, and $\{H\cup F_j:1\leq j\leq
  \Ceil{\frac{k+1}{d-\Delta(H)}}\}$ is the desired covering of $G$.
\end{proof}


\begin{corollary}
  \corlabel{Spanning} Let $H$ be a connected spanning subgraph of a
  graph $G$. Then for every integer $d\geq\Delta(H)+1$, there is a
  covering of $G$
  by $$\CEIL{\frac{\Delta(G)-\delta(H)+1}{d-\Delta(H)}}$$ connected
  degree-$d$ subgraphs.
\end{corollary}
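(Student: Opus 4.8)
The plan is to obtain \corref{Spanning} as an immediate consequence of \lemref{General}. First I would note that since $H$ is a connected \emph{spanning} subgraph of $G$, every edge of $G$ has both endpoints in $V(H)$, so $H$ is in particular a connected vertex cover of $G$ and \lemref{General} is applicable with this choice of $H$. So it remains only to control the quantity $k:=\Delta(G-E(H))$ that appears in the bound of \lemref{General}.

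Next I would bound $k$ in terms of $\Delta(G)$ and $\delta(H)$. For any vertex $v$ of $G$ we have $\deg_{G-E(H)}(v)=\deg_G(v)-\deg_H(v)$, because $E(H)\subseteq E(G)$. Since $H$ is spanning, $v\in V(H)$, so $\deg_H(v)\geq\delta(H)$ and also $\deg_G(v)\leq\Delta(G)$; hence $\deg_{G-E(H)}(v)\leq\Delta(G)-\delta(H)$. Taking the maximum over $v$ gives $k\leq\Delta(G)-\delta(H)$, equivalently $k+1\leq\Delta(G)-\delta(H)+1$.

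Finally, since $d\geq\Delta(H)+1$ the denominator $d-\Delta(H)$ is a positive integer, so $\CEIL{\frac{m}{d-\Delta(H)}}$ is nondecreasing in $m$. Applying \lemref{General} produces a covering of $G$ by $\CEIL{\frac{k+1}{d-\Delta(H)}}\leq\CEIL{\frac{\Delta(G)-\delta(H)+1}{d-\Delta(H)}}$ connected degree-$d$ subgraphs; if the number produced is strictly smaller than the stated bound one can pad the covering with copies of any connected degree-$d$ subgraph (for instance a single edge) to reach exactly $\CEIL{\frac{\Delta(G)-\delta(H)+1}{d-\Delta(H)}}$ subgraphs, as claimed.

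There is essentially no obstacle in this argument: it is a one-line specialisation of \lemref{General}. The only point deserving a moment's care is the degree inequality $\deg_{G-E(H)}(v)\leq\Delta(G)-\delta(H)$, which crucially uses that $H$ is spanning so that the lower bound $\deg_H(v)\geq\delta(H)$ applies at the \emph{same} vertex $v$ at which the upper bound $\deg_G(v)\leq\Delta(G)$ is used.
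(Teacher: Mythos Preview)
Your proof is correct and follows exactly the approach of the paper: apply \lemref{General} with the observation that $k=\Delta(G-E(H))\leq\Delta(G)-\delta(H)$. The paper's own proof is the single line ``The result follows from \lemref{General} with $k\leq\Delta(G)-\delta(H)$,'' and you have simply spelled out the details of that inequality. The padding remark at the end is unnecessary (a covering by fewer subgraphs is already a stronger statement), but it does no harm.
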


\begin{proof}
  The result follows from \lemref{General} with
  $k\leq\Delta(G)-\delta(H)$.
\end{proof}


\begin{corollary}
  \corlabel{Hamiltonian} For every integer $d\geq3$, every Hamiltonian
  graph $G$ has a covering by $$\CEIL{\frac{\Delta(G)-1}{d-2}}$$
  connected degree-$d$ subgraphs.
\end{corollary}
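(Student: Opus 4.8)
The plan is to derive \corref{Hamiltonian} as an immediate special case of \corref{Spanning}. Since $G$ is Hamiltonian, it has a spanning cycle $H$. A cycle is connected, and every vertex of $H$ has degree exactly $2$, so $\Delta(H)=\delta(H)=2$. Hence the hypothesis $d\geq3$ of \corref{Hamiltonian} is precisely the hypothesis $d\geq\Delta(H)+1$ needed to apply \corref{Spanning} with this choice of $H$.

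First I would note that, by \corref{Spanning}, there is a covering of $G$ by $\CEIL{\frac{\Delta(G)-\delta(H)+1}{d-\Delta(H)}}$ connected degree-$d$ subgraphs. Substituting $\Delta(H)=\delta(H)=2$ rewrites this quantity as $\CEIL{\frac{\Delta(G)-1}{d-2}}$, which is exactly the claimed bound, completing the proof.

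For completeness it is worth recalling where the bound ultimately comes from: \corref{Spanning} is obtained from \lemref{General} by taking $k\leq\Delta(G)-\delta(H)$, and \lemref{General} in turn applies Vizing's Theorem \citep{Vizing64} to $G-E(H)$ to decompose its edge set into $\Delta(G-E(H))+1$ matchings, which are then grouped into degree-$(d-\Delta(H))$ subgraphs and unioned with $H$. When $H$ is a Hamiltonian cycle, $G-E(H)$ has maximum degree at most $\Delta(G)-2$, so the number of matchings is at most $\Delta(G)-1$, and grouping them into batches of $d-2$ yields $\CEIL{\frac{\Delta(G)-1}{d-2}}$ connected degree-$d$ subgraphs, each consisting of the Hamiltonian cycle together with a union of extra matchings.

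There is no real obstacle here: the entire content is the observation that a Hamiltonian cycle is a connected $2$-regular spanning subgraph, so the general machinery of \corref{Spanning} specialises cleanly. The only thing to check is the arithmetic $\Delta(G)-\delta(H)+1=\Delta(G)-1$ and $d-\Delta(H)=d-2$ when $\delta(H)=\Delta(H)=2$, which is trivial.
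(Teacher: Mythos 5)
Your proof is correct and is exactly the paper's argument: apply \corref{Spanning} to a Hamiltonian cycle $H$, note $\Delta(H)=\delta(H)=2$, and simplify. Nothing further to add.
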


\begin{proof}
  Apply \corref{Spanning} with a Hamiltonian cycle $H$ of $G$.  Then
  $\Delta(H)=\delta(H)=2$. The result follows.
\end{proof}

This result can be slightly strengthened for $d=4$.

\begin{proposition}
  \proplabel{DegreeFourHamiltonian} For all $\epsilon>0$ there is an
  integer $\Delta_0$ such that every Hamiltonian graph $G$ with
  $\Delta(G)\geq\Delta_0$ has a covering
  by $$\CEIL{(\half+\epsilon)(\Delta(G)-2)}$$ connected degree-$4$
  subgraphs.
\end{proposition}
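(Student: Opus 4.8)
The plan is to reuse the Hamilton cycle as a spanning, connected skeleton — exactly as in \corref{Hamiltonian} — but to distribute the remaining edges more economically. Fix a Hamilton cycle $H$ of $G$; it is spanning, connected, and $2$-regular, and $J:=G-E(H)$ has $\Delta(J)\le\Delta(G)-2$. Suppose we can cover $E(J)$ by subgraphs $F_1,\dots,F_m$, each of maximum degree at most $2$, with $m\le\CEIL{(\half+\epsilon)(\Delta(G)-2)}$. Then, since $H$ and $J$ are edge-disjoint, each $H\cup F_j$ has maximum degree at most $2+2=4$; it is connected because it contains the spanning connected subgraph $H$; and $\bigcup_j(H\cup F_j)$ contains every edge of $H$ and of $J$, hence all of $G$. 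So $\SET{H\cup F_j:1\le j\le m}$ is the required covering, and the whole proposition reduces to the decomposition claim for $J$. The degree and connectivity bookkeeping here is routine, so the entire content is the edge-decomposition of $J$.

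For that decomposition the cleanest route I would take is via $2$-factorisations: a graph with maximum degree at most $k$ embeds as a subgraph of a $2\CEIL{k/2}$-regular multigraph $J^{*}$ on a possibly enlarged vertex set (take a disjoint copy $J'$ of $J$, join each vertex to its twin by the right number of parallel edges, and add a couple of extra vertices to fix parity if $k$ is odd); Petersen's theorem decomposes $J^{*}$ into $\CEIL{k/2}$ spanning $2$-regular subgraphs, and intersecting each of them with $E(J)$ partitions $E(J)$ into $\CEIL{k/2}$ subgraphs of maximum degree at most $2$. Taking $k=\Delta(G)-2$ gives $m=\CEIL{(\Delta(G)-2)/2}$, and $\CEIL{(\Delta(G)-2)/2}\le\CEIL{(\half+\epsilon)(\Delta(G)-2)}$ as soon as $\epsilon(\Delta(G)-2)\ge\half$, i.e.\ once $\Delta(G)\ge\Delta_{0}:=\CEIL{2+\tfrac{1}{2\epsilon}}$. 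An alternative, probably closer to the intended argument given the $\epsilon$/$\Delta_{0}$ phrasing, is to cover $E(J)$ by linear forests and invoke Alon's asymptotic bound on linear arboricity: $J$ decomposes into $(\half+o(1))\Delta(J)$ linear forests, which is at most $(\half+\epsilon)(\Delta(G)-2)$ once $\Delta(G)$ exceeds a threshold $\Delta_{0}(\epsilon)$, the $\epsilon$-slack being exactly what swallows the lower-order error term.

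The main obstacle is pinning down the degree-$2$ decomposition of $J$ with a clean, citable statement and handling the bookkeeping at its boundary. In the first route this means getting the embedding of a bounded-degree graph into a regular multigraph correct, which requires attention to the parity of $k$ and to the parity of the number of odd-degree vertices so that Petersen's theorem applies; this is standard but should be written out, or sidestepped entirely by taking the linear-arboricity route, where the only subtlety is quoting the asymptotic bound in the form $(\half+o(1))\Delta$ and checking that the implied threshold can be taken as the $\Delta_{0}$ in the statement. Everything else — that $H$ makes each $H\cup F_j$ connected, that edge-disjointness keeps the degrees at most $4$, and that the union covers $G$ — is immediate.
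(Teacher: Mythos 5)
Your argument is correct. Your second route --- applying Alon's asymptotic linear arboricity bound to $G-E(H)$ --- is exactly the paper's proof, $\epsilon$-for-$\epsilon$. Your first route is genuinely different and in fact proves more: since the proposition only asks for \emph{connected degree-$4$ subgraphs}, the pieces $F_j$ are allowed to contain cycles, so linear forests are overkill. The Petersen-style decomposition of a graph of maximum degree $k$ into $\ceil{k/2}$ subgraphs of maximum degree at most $2$ works cleanly here (your two-copies construction already gives every vertex degree exactly $2\ceil{k/2}$, so the parity repair in your parenthetical is vacuous; $2$-factorising each component of the regular multigraph and restricting to $E(J)$ does the rest), and it yields a covering of $G$ by $\CEIL{(\Delta(G)-2)/2}$ connected degree-$4$ subgraphs with no $\epsilon$ and no threshold at all: the inequality $\ceil{(\Delta-2)/2}\le\ceil{(\half+\epsilon)(\Delta-2)}$ holds for every $\Delta\ge 2$ by monotonicity of the ceiling, so even your condition $\epsilon(\Delta(G)-2)\ge\half$ is not needed. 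This elementary route is therefore both simpler and sharper than the stated bound, and it also improves the $d=4$ case of \corref{Hamiltonian}. The $\epsilon$-slack and the appeal to Alon's theorem are only necessary if one insists that the parts glued onto the Hamilton cycle be acyclic, which the statement as written does not require.
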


\begin{proof}
  A forest is \emph{linear} if each component is path.  The
  \emph{linear arboricity} of a graph $G$ is the minimum number of
  linear forests that partition $E(G)$.  \citet{Alon-IJM88} proved
  that for all $\epsilon>0$ there is an integer $\Delta_0$ such that
  every graph $G$ with $\Delta(G)\geq\Delta_0$ has linear arboricity
  at most $\CEIL{(\half+\epsilon)\Delta(G)}$.  Apply this result to
  $G-E(C)$ where $C$ is a Hamiltonian cycle in $G$. We obtain a
  partition \F\ of $E(G)-E(C)$ into
  $\CEIL{(\half+\epsilon)(\Delta(G)-2)}$ linear forests. Thus $\{C\cup
  F:F\in\F\}$ is a covering of $G$ by degree-4 subtrees.
\end{proof}

Now consider coverings of planar graphs by connected subgraphs of
bounded degree.  \citet{Tutte56} proved that every $4$-connected
planar graph is Hamiltonian. Thus \corref{Hamiltonian} implies the
next result.

\begin{corollary}
  \corlabel{FourConnectedPlanar} For every integer $d\geq3$, every
  $4$-connected planar graph $G$ has a covering
  by $$\CEIL{\frac{\Delta(G)-1}{d-2}}$$ connected degree-$d$
  subgraphs.\qed
\end{corollary}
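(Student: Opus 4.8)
The plan is immediate: I would invoke the classical theorem of \citet{Tutte56} that every $4$-connected planar graph is Hamiltonian, and then feed this into \corref{Hamiltonian}. First I would observe that since $G$ is $4$-connected and planar, $G$ contains a Hamiltonian cycle. Then I would apply \corref{Hamiltonian} directly to $G$ with the given integer $d\geq3$; this yields a covering of $G$ by $\CEIL{\frac{\Delta(G)-1}{d-2}}$ connected degree-$d$ subgraphs, which is exactly the claimed bound. Nothing further is required.

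There is essentially no obstacle here: the only external ingredient is Tutte's theorem, and the combinatorial content has already been discharged in \corref{Hamiltonian} (itself a special case of \corref{Spanning}, applied with a spanning cycle $H$ so that $\Delta(H)=\delta(H)=2$, which in turn rests on \lemref{General} and Vizing's Theorem). If one wished to make the argument self-contained, one could unwind the chain explicitly: properly edge-colour $G-E(C)$ with at most $\Delta(G)-1$ colours, group the resulting matchings into $\CEIL{\frac{\Delta(G)-1}{d-2}}$ subgraphs of maximum degree $d-2$, and note that adjoining the Hamiltonian cycle $C$ to each group produces a connected degree-$d$ subgraph, with the union of these subgraphs covering every edge of $G$. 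But since \corref{Hamiltonian} is already established, the one-line appeal above is all that is needed, which is why the statement is given with no separate proof.
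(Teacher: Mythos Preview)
Your proposal is correct and matches the paper's argument exactly: the paper simply cites Tutte's theorem that every $4$-connected planar graph is Hamiltonian and then invokes \corref{Hamiltonian}, which is precisely what you do. No further comment is needed.
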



\begin{corollary}
  \corlabel{ThreeConnectedPlanar} For every integer $d\geq4$, every
  $3$-connected planar graph $G$ has a covering
  by $$\CEIL{\frac{\Delta(G)}{d-3}}$$ connected degree-$d$ subgraphs.
\end{corollary}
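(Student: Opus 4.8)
The plan is to reduce to \corref{Spanning} by producing a connected spanning subgraph of $G$ with small maximum degree. The key external ingredient is Barnette's theorem that every $3$-connected planar graph has a spanning tree $H$ with $\Delta(H)\leq3$ (every $3$-connected planar graph is, up to the simplicity conventions in force here, a polyhedral graph, so the theorem applies). Since a $3$-connected graph has at least four vertices, $H$ is a non-trivial tree and therefore has a leaf; hence $\delta(H)=1$.

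Next I would apply \corref{Spanning} to $G$ with this spanning tree $H$. The hypothesis $d\geq\Delta(H)+1$ holds because $d\geq4$ and $\Delta(H)\leq3$. Thus $G$ has a covering by $\CEIL{\frac{\Delta(G)-\delta(H)+1}{d-\Delta(H)}}$ connected degree-$d$ subgraphs. Substituting $\delta(H)=1$ reduces this to $\CEIL{\frac{\Delta(G)}{d-\Delta(H)}}$, and since $0<d-3\leq d-\Delta(H)$ we get $\CEIL{\frac{\Delta(G)}{d-\Delta(H)}}\leq\CEIL{\frac{\Delta(G)}{d-3}}$. (If one prefers a covering of exactly the stated size, one can pad with extra copies of $H$, which is itself a connected degree-$d$ subgraph since $\Delta(H)\leq3<d$.) This gives the claimed bound.

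I do not expect any real obstacle here: the only non-routine step is invoking Barnette's theorem, and everything after that is the same two-line arithmetic that drives \corref{Hamiltonian} and \corref{FourConnectedPlanar}. The one point to state carefully is that $\delta(H)=1$ for a spanning \emph{tree}, which is what makes the numerator collapse from $\Delta(G)-\delta(H)+1$ to $\Delta(G)$, matching the exponent/denominator pattern of the preceding corollaries (Hamiltonian cycle: $\Delta(H)=2$, $\delta(H)=2$, numerator $\Delta(G)-1$; spanning tree: $\Delta(H)=3$, $\delta(H)=1$, numerator $\Delta(G)$).
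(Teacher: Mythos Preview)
Your proof is correct and follows essentially the same route as the paper: invoke Barnette's theorem to obtain a spanning tree $H$ with $\Delta(H)\leq3$ and $\delta(H)=1$, then apply \corref{Spanning}. The paper simply writes $\Delta(H)=3$ and substitutes directly, whereas you are slightly more careful in allowing $\Delta(H)\leq3$ and then bounding $\CEIL{\Delta(G)/(d-\Delta(H))}\leq\CEIL{\Delta(G)/(d-3)}$; this extra care is harmless and the arguments are otherwise identical.
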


\begin{proof}
  \citet{Barnette66} proved that $G$ has a degree-$3$ spanning tree
  $H$. The result follows from \corref{Spanning} with $\Delta(H)=3$
  and $\delta(H)=1$.
\end{proof}

Note that various generalisations of the above-mentioned result by
\citet{Barnette66} for graphs embedded on surfaces
\citep{Thom-JCTB94,Yu97,KNAO-JCTB03,SZ-JCTB98,GW-GC94,BEGMR-JCTB95,EG-JCTB94}
can be applied to obtain similar results to
\corref{ThreeConnectedPlanar}. We omit the details.


We conclude with an open problem: Is there a function $f$ and
constants $c$ and $d$ such that every $c$-connected graph $G$ has a
covering by $f(\Delta(G))$ connected degree-$d$ subgraphs?  We now
show that the answer is negative for $c=2$ and $d=2$ (even for
outerplanar graphs).

\begin{proposition}
  \proplabel{Arms} For all $k\geq2$ there is a $2$-connected
  outerplanar graph with maximum degree $3$ that requires at least $k$
  subgraphs in every covering by degree-$2$ connected subgraphs.
\end{proposition}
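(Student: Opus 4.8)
The plan is to build, for each $k\ge2$, a graph $G_k$ that is a central cycle with $k$ pairwise ``independent'' arms, and to show that each subgraph in any covering can contain the ``critical'' edge of at most two arms. For the construction I would take a cycle $C$ on vertices $u_1,w_1,u_2,w_2,\dots,u_k,w_k$ (in this cyclic order) and, for each $i$, replace the edge $u_iw_i$ by a small \emph{arm gadget} joining $u_i$ to $w_i$ — for instance a ``pocket'' path $u_i,a_i,b_i,c_i,w_i$ together with the chord $a_ic_i$, so that $u_i,w_i,a_i,c_i$ all have degree $3$ (two $C$-edges or pocket-edges plus one chord/arm edge) while $b_i$ has degree $2$. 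Drawing every arm outside $C$ shows $G_k$ is outerplanar; it is visibly $2$-connected and has maximum degree $3$. The exact gadget will be tuned below; I designate the \emph{critical edge} of arm $i$ to be a fixed edge of its gadget, say $e_i:=a_ib_i$.

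For the lower bound I would first invoke \lemref{GrowingGraph} to assume the covering consists of pairwise-intersecting inextendable paths and cycles, and then prove the key claim: \emph{no path and no cycle of $G_k$ contains three of the critical edges $e_1,\dots,e_k$.} The intended mechanism is the degree-$3$ obstruction already used in the lower bounds of \thmref{Caterpillar} and \thmref{Pathwidth}: if a connected degree-$2$ subgraph $F$ contained $e_a,e_b,e_c$ with $a<b<c$, then — because $\Delta(F)\le2$ forces $F$'s intersection with each arm to be of a restricted shape (a degree-$2$ subgraph that enters arm $b$ through one of its two boundary vertices on $C$ and also covers $e_b$ is compelled to leave arm $b$ through the other boundary vertex) — the subgraph $F$ would have to traverse the whole $C$-arc separating arm $b$ from arms $a$ and $c$ on one side, and hence would use two $C$-edges together with the arm edge at the corresponding boundary vertex, giving that vertex degree $3$ in $F$, a contradiction. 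Granting the claim, each covering subgraph contains at most two of the $e_i$, so at least $\lceil k/2\rceil$ subgraphs are needed; letting $k\to\infty$ (and re-indexing so the bound reads ``at least $k$'') finishes the proof.

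The main obstacle, and what dictates the precise gadget, is making ``$F$'s intersection with an arm is forced'' literally correct for \emph{cycles} as well as for paths: a cycle has no endpoints to ``spend'' inside an arm, so the gadget must be chosen so that every cycle of $G_k$ using a critical edge is trapped near a single arm — it cannot simultaneously be a cycle of $G_k$ and escape the arm without repeating a $C$-edge at a boundary vertex or acquiring degree $3$ there. I expect the naive pocket above to need adjustment for this (e.g.\ nesting the chord one more level, or lengthening the pocket so that the only cycles through $e_i$ live entirely inside arm $i$, while still forcing $e_i$ to be covered), and pinning this down is the one delicate point; the structural verifications (outerplanarity, $2$-connectedness, degree bound) and the counting step are routine.
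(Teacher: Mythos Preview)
Your proposal has a genuine gap: with the gadget you describe, a single path can contain \emph{all} of the critical edges $e_1,\dots,e_k$, so no lower bound follows. Concretely, in your graph the path
\[
b_1,\,a_1,\,u_1,\,w_1,\,u_2,\,a_2,\,b_2,\,c_2,\,w_2,\,u_3,\,a_3,\,b_3,\,c_3,\,w_3,\,\dots,\,u_k,\,a_k,\,b_k,\,c_k,\,w_k
\]
is a simple degree-$2$ subgraph containing every $e_i=a_ib_i$. The mechanism you describe (``$F$ would use two $C$-edges together with the arm edge at a boundary vertex, giving degree $3$'') never triggers: at each boundary vertex $u_i$ or $w_i$ the path uses exactly one cycle edge and one arm edge. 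More generally, any single edge lying on a $u_i$--$w_i$ path inside the arm can be absorbed by a through path, so a ``one critical edge per arm'' strategy cannot work regardless of how the pocket is tuned; the obstacle is not just cycles but already paths. Your difficulty-flag was aimed at the wrong place.

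The paper's construction repairs exactly this: each arm is a $2\times m$ ladder attached along its base edge to the central cycle, and one tracks \emph{all} $m$ rungs (cross edges) rather than a single critical edge. The key structural fact is that a connected degree-$2$ subgraph containing \emph{two} rungs of the same ladder must either be a cycle trapped inside that ladder or a path with an endpoint there; hence any one covering subgraph ``occupies'' at most two ladders, and from every non-occupied ladder it picks up at most one rung. This gives each covering subgraph fewer than $n+2m$ rungs, while the graph has $nm$ rungs in total, yielding $t>\dfrac{nm}{n+2m}=k$ with the choices $m=2k$, $n=4k$. So the counting is over many edges per arm, not a single designated one, and the ``endpoint must be spent'' idea only bites once you demand two rungs from the same ladder.
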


\begin{proof}
  Let $m:=2k$ and $n:=4k$.  Let $H$ be the graph obtained from
  disjoint paths $(a_1,\dots,a_m)$ and $(b_1,\dots,b_m)$ by adding the
  edge $a_ib_i$ for all $i\in[1,m]$. Each edge $a_ib_i$ is called a
  \emph{cross edge}, and $a_1b_1$ is called the \emph{base edge}.  As
  illustrated in \figref{Arms}, let $G$ be the graph obtained from a
  cycle $(v_1,\dots,v_{2n})$ and $n$ copies $H_1,\dots,H_{n}$ of $H$
  by identifying the base edge of $H_j$ with the edge $v_{2j-1}v_{2j}$
  for each $j\in[1,n]$. Observe that $G$ is 2-connected and
  outerplanar, and has maximum degree 3.  Let $X_1,\dots,X_t$ be a
  covering of $G$ by connected degree-$2$ subgraphs (that is, paths
  and cycles). To complete the proof we now show that $t\geq k$. Say
  $X_i$ \emph{occupies} $H_j$ if $X_i$ contains at least two cross
  edges in $H_j$. Observe that if $X_i$ occupies $H_j$, then either
  $X_i$ is a cycle contained in $H_j$, or $X_i$ is a path and it has
  an endpoint in $H_j$. Thus each $X_i$ occupies at most two $H_j$
  subgraphs, implying $X_i$ contains less than $n+2m$ cross
  edges. Since there are $nm$ cross edges in total,
  $t>\frac{nm}{n+2m}=k$.
\end{proof}

\Figure{Arms}{\includegraphics{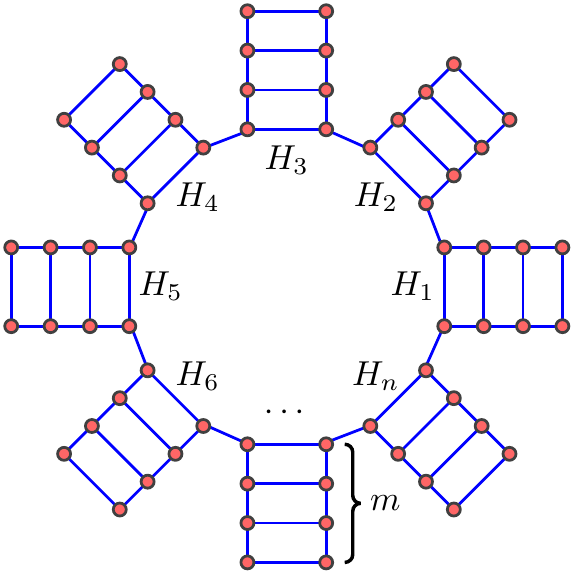}}{Construction in the proof of
  \propref{Arms}.}

This question seems related to a result by \citet{CXY04}, who proved
that every $3$-connected graph $G$ with $n\geq 4$ vertices and maximum
degree at most $d\geq 3$ contains a cycle of length at least
$n^{\log_b 2}+2$, where $b=2(d-1)^2+1$.


\def\cprime{$'$} \def\soft#1{\leavevmode\setbox0=\hbox{h}\dimen7=\ht0\advance
  \dimen7 by-1ex\relax\if t#1\relax\rlap{\raise.6\dimen7
  \hbox{\kern.3ex\char'47}}#1\relax\else\if T#1\relax
  \rlap{\raise.5\dimen7\hbox{\kern1.3ex\char'47}}#1\relax \else\if
  d#1\relax\rlap{\raise.5\dimen7\hbox{\kern.9ex \char'47}}#1\relax\else\if
  D#1\relax\rlap{\raise.5\dimen7 \hbox{\kern1.4ex\char'47}}#1\relax\else\if
  l#1\relax \rlap{\raise.5\dimen7\hbox{\kern.4ex\char'47}}#1\relax \else\if
  L#1\relax\rlap{\raise.5\dimen7\hbox{\kern.7ex
  \char'47}}#1\relax\else\message{accent \string\soft \space #1 not
  defined!}#1\relax\fi\fi\fi\fi\fi\fi} \def\Dbar{\leavevmode\lower.6ex\hbox to
  0pt{\hskip-.23ex\accent"16\hss}D}

\appendix

\section{Complete Multipartite Graphs}

For a graph $G$ and integer $d\geq1$, let $\cliques[d](G)$ be the
minimum number of disjoint $(\leq d)$-cliques of $G$ that partition
$V(G)$. For example, $\cliques[1](G)=|V(G)|$, and
$\cliques[2](G)=|V(G)|-p$, where $p$ is the number of edges in a
maximum matching in $G$. Let $K\ang{n_1,\dots,n_k}$ be the complete
$k$-partite graph with $n_i$ vertices in the $i$-th colour class. We
now determine $\cliques[d](K\ang{n_1,\dots,n_k})$.

\begin{lemma}
  \lemlabel{CMG} For all integers $k\geq d\geq1$ and
  $n_1,\dots,n_k\geq0$,
$$\cliques[d](K\ang{n_1,\dots,n_k})=
\MAXM{\max_{1\leq i\leq
    k}n_i,\CEIL{\frac{1}{d}\sum_{i=1}^kn_i}}\enspace.$$ Moreover,
there is a $O(\sum_{i=1}^kn_i)$ time algorithm to compute a partition
of $K\ang{n_1,\dots,n_k}$ into this many $(\leq d)$-cliques.
\end{lemma}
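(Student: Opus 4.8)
The plan is to prove matching lower and upper bounds on $\cliques[d](K\langle n_1,\dots,n_k\rangle)$, and to extract the algorithm from the upper-bound construction. Write $N:=\sum_{i=1}^k n_i$ and $M:=\max_i n_i$, and let $m:=\max\{M,\lceil N/d\rceil\}$ denote the claimed value.

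First I would establish the lower bound, which is immediate. Any clique of $K\langle n_1,\dots,n_k\rangle$ contains at most one vertex from each colour class, since two vertices in a common class are non-adjacent; hence in a partition into $(\leq d)$-cliques every part has at most $\min\{k,d\}=d$ vertices (using $k\geq d$). So any such partition has at least $\lceil N/d\rceil$ parts. Moreover, each part contains at most one vertex of the largest colour class, so at least $M$ parts are needed. Therefore $\cliques[d](K\langle n_1,\dots,n_k\rangle)\geq m$.

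For the upper bound I would exhibit a partition into exactly $m$ parts by a ``block plus round-robin'' assignment. List the vertices colour class by colour class as a sequence $v_1,\dots,v_N$ (the first $n_1$ terms the first class, the next $n_2$ the second, and so on), and place $v_t$ into part $C_j$ where $j\in\{1,\dots,m\}$ and $j\equiv t\pmod m$. Two things must be checked. First, each $C_j$ receives at most $\lceil N/m\rceil$ vertices; since $m\geq\lceil N/d\rceil$ we have $N\leq dm$, so $\lceil N/m\rceil\leq d$. Second, the vertices in $C_j$ lie in pairwise distinct colour classes: each colour class occupies a block of at most $M\leq m$ consecutive indices of the sequence, and a set of at most $m$ consecutive integers meets each residue class modulo $m$ at most once; since $C_j$ collects exactly the vertices whose index lies in a fixed residue class, it meets each colour class at most once. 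Hence each $C_j$ is a clique of size at most $d$, and $\{C_1,\dots,C_m\}$ is the required partition. For the algorithm: computing $M$, $N$ and hence $m$ is a single scan, and the cyclic assignment is one further pass over the $N$ vertices, giving $O(\sum_i n_i)$ time.

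The only real content is reconciling the two constraints simultaneously in one construction: that each part has at most $d$ vertices, and that each part is rainbow (at most one vertex per colour class). The block ordering guarantees the rainbow property for free and the round-robin modulo $m$ guarantees the size bound, so I expect the main point to be the short elementary observation that a run of at most $m$ consecutive integers hits each residue class modulo $m$ at most once; everything else is bookkeeping.
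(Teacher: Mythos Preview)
Your proof is correct, and it takes a genuinely different route from the paper's. The lower bound is identical. For the upper bound, the paper argues by induction on $d+\sum_i n_i$: it repeatedly strips off a $d$-clique taken from the $d$ currently largest colour classes, and checks in each case that the maximum in the formula drops by exactly one. Your argument instead gives a one-shot explicit construction: list the vertices in blocks by colour class and distribute them cyclically into $m$ bins.

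Your approach is cleaner and shorter; the two key observations (that $m\geq\lceil N/d\rceil$ forces each bin to have at most $d$ vertices, and that $m\geq M$ forces each bin to be rainbow because a block of at most $m$ consecutive integers hits each residue class modulo $m$ at most once) handle the two constraints simultaneously with no case analysis. The paper's greedy/inductive proof, on the other hand, has to split into the cases $n_d=0$, $n_1=\cdots=n_{d+1}$, and $n_{d+1}<n_d$. What the paper's approach buys is that its greedy rule (``always take from the $d$ largest classes'') is reused directly in the proof of Corollary~\ref{cor:CMGCMG}, where one needs to peel off cliques of one prescribed size before switching to another; your cyclic assignment is less obviously adaptable to that two-phase setting, though it could be made to work. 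For the present lemma, your construction is at least as good and yields the linear-time algorithm more transparently.
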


\begin{proof}
  Since each vertex in the $i$-th colour class is in a distinct clique
  of the partition, $\cliques[d](K\ang{n_1,\dots,n_k})\geq n_i$. Since
  every vertex is in some clique of the partition,
  $d\cdot\cliques[d](K\ang{n_1,\dots,n_k})\geq\sum_{i=1}^kn_i$. Thus
  proves the lower bound on $\cliques[d](K\ang{n_1,\dots,n_k})$.

  It remains to prove the upper bound. We proceed by induction on
  $d+\sum_{i=1}^kn_i$. Assume that $n_1\geq \dots\geq n_k\geq0$.  If
  $d=1$ then
$$\cliques[1](K\ang{n_1,\dots,n_k})=\sum_{i=1}^kn_i=\MAXM{n_1,\sum_{i=1}^kn_i},$$ 
as desired. Now assume that $d\geq2$. First suppose that $n_d=0$. Then
\begin{align*}
  \cliques[d](K\ang{n_1,\dots,n_k})=
  \cliques[d-1](K\ang{n_1,\dots,n_{d-1}}),
\end{align*}
and by induction
\begin{align*}
  \cliques[d](K\ang{n_1,\dots,n_k})
  \leq\MAXM{n_1,\CEIL{\frac{1}{d-1}\sum_{i=1}^{d-1}n_i}} =n_1
  \leq\MAXM{n_1,\CEIL{\frac{1}{d}\sum_{i=1}^{k}n_i}}\enspace.
\end{align*}
Now assume that $n_d\geq1$. Let $C$ be a set with exactly one vertex
from each of the $d$ largest colour classes. So $C$ is a $d$-clique,
and
\begin{equation}
  \eqnlabel{Induction}
  \cliques[d](K\ang{n_1,\dots,n_k}) \leq 
  1+\cliques[d](K\ang{n_1-1,\dots,n_d-1,n_{d+1},\dots,n_k})\enspace.
\end{equation}
Suppose that $n_1=\dots=n_{d+1}$ ($\geq1$).  Thus by
\eqnref{Induction} and induction
\begin{align*}
  \cliques[d](K\ang{n_1,\dots,n_k})
  &\;\leq\; 1+\MAXM{n_{d+1},\CEIL{\frac{1}{d}\BRACKET{\BRACKET{\sum_{i=1}^{k}n_i}-d}}}\\
  &=\MAXM{1+n_1,\CEIL{\frac{1}{d}\sum_{i=1}^kn_i}}\enspace.
\end{align*}
Observe that
$$\CEIL{\frac{1}{d}\sum_{i=1}^kn_i}\geq\CEIL{\frac{(d+1)n_1}{d}}
=n_1+\CEIL{\frac{n_1}{d}}\geq n_1+1\enspace.$$ Thus
$$\cliques[d](K\ang{n_1,\dots,n_k}) \leq 
\CEIL{\frac{1}{d}\sum_{i=1}^kn_i}
=\MAXM{n_1,\CEIL{\frac{1}{d}\sum_{i=1}^kn_i}},$$ as desired.  Now
assume that $n_{d+1}<n_d$.  Hence by \eqnref{Induction} and induction,
\begin{align*}
  \cliques[d](K\ang{n_1,\dots,n_k}) &\;\leq\;
  1+\MAXM{n_1-1,\CEIL{\frac{1}{d}\BRACKET{\BRACKET{\sum_{i=1}^kn_i}-d}}}\\
  &\;=\; \MAXM{n_1,\CEIL{\frac{1}{d}\sum_{i=1}^kn_i}},
\end{align*}
as desired. It is easily seen that this proof can be adapted to give a
greedy linear-time algorithm to compute the partition, where at each
stage, a $d$-clique is repeatedly chosen from the $d$ largest colour
classes.
\end{proof}

Note that the case $d=2$ in \lemref{CMG} also follows from a result by
\citet{Sitton-EJUM}, who determined the size of the largest matching
in $K\ang{n_1,\dots,n_k}$.

The \emph{Turan} graph $K\ang{n;k}$ is the complete $k$-partite graph
$K\ang{n_1,\dots,n_k}$ where $n=\sum_{i=1}^kn_i$ and $|n_i-n_j|\leq1$
for $i,j\in\{1,\dots,k\}$.

\begin{corollary}
  \corlabel{Turan} For all integers $k\geq d\geq1$,
$$\cliques[d](K\ang{n;k})=\CEIL{\frac{n}{d}}\enspace.$$
Moreover, there is $O(n)$ time algorithm to compute a partition of
$K\ang{n;k}$ into $\Ceil{\frac{n}{d}}$ $(\leq d)$-cliques.
\end{corollary}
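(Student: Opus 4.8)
The plan is to obtain this corollary as an immediate specialisation of \lemref{CMG}. First I would identify the colour-class sizes of the Tur\'an graph: by definition $K\ang{n;k}=K\ang{n_1,\dots,n_k}$ with $\sum_{i=1}^k n_i=n$ and $|n_i-n_j|\leq1$, so each $n_i$ equals $\Floor{n/k}$ or $\Ceil{n/k}$, whence $\max_{1\leq i\leq k}n_i=\Ceil{n/k}$ (this also holds trivially when $n=0$). Then I would apply \lemref{CMG} to conclude that $$\cliques[d](K\ang{n;k})=\MAXM{\Ceil{\tfrac{n}{k}},\Ceil{\tfrac{n}{d}}}.$$ The only thing to check is that the right-hand maximum collapses to $\Ceil{n/d}$; since $k\geq d\geq1$ we have $\frac{n}{k}\leq\frac{n}{d}$, and monotonicity of the ceiling function gives $\Ceil{n/k}\leq\Ceil{n/d}$. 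This yields the claimed formula.

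For the algorithmic statement I would simply invoke the $O(\sum_{i=1}^k n_i)$-time algorithm guaranteed by \lemref{CMG}, which computes a partition of $K\ang{n_1,\dots,n_k}$ into $\MAXM{\max_i n_i,\Ceil{\frac1d\sum_i n_i}}$ many $(\leq d)$-cliques. Applied to the Tur\'an graph, where $\sum_{i=1}^k n_i=n$, this runs in $O(n)$ time and (by the formula just established) outputs a partition into exactly $\Ceil{n/d}$ cliques of size at most $d$, as required.

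There is essentially no obstacle here: the entire content is packaged in \lemref{CMG}, and the only step requiring (elementary) justification is the inequality $\Ceil{n/k}\leq\Ceil{n/d}$ for $k\geq d$. Consequently the proof is a two-line deduction, and I would not expect to need any further argument.
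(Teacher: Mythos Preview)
Your proposal is correct and follows essentially the same approach as the paper: apply \lemref{CMG} to the Tur\'an graph and then show that the $\max_i n_i$ term is dominated by $\Ceil{n/d}$. Your presentation is in fact slightly cleaner than the paper's, which case-splits on whether $k\mid n$ rather than simply observing $\max_i n_i=\Ceil{n/k}\leq\Ceil{n/d}$.
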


\begin{proof}
  Let $x$ and $y$ be integers such that $n=xk+y$ where $0\leq y\leq
  k-1$.  Then $$K\ang{n;k}\cong
  K\ang{\underbrace{x,\dots,x}_{k-y},\underbrace{x+1,\dots,x+1}_y}\enspace.$$
  By \lemref{CMG},
$$\cliques[d](K\ang{n;k})=
\begin{cases}
  \max\{x,\Ceil{\frac{n}{d}}\}	& \text{ if }y=0\enspace,\\
  \max\{x+1,\Ceil{\frac{n}{d}}\} & \text{ if }y\geq 1\enspace.
\end{cases}$$ If $y=0$ then $n=xk\geq xd$ and $\frac{n}{d}\geq x$.  If
$y\geq 1$ then $n\geq xk+1\geq xd+1$ and $\Ceil{\frac{n}{d}}\geq x+1$.
In both cases, $\cliques[d](K\ang{n;k})=\Ceil{\frac{n}{d}}$.
\end{proof}

In the proof of the upper bound in \thmref{RootedPathwidth}, we need
the following result about partitioning Turan graphs into cliques of
two specified sizes.

\begin{corollary}
  \corlabel{CMGCMG} For all integers $n,k,p,q,m$, such that $n,k\geq 1$
  and $k\geq p,q\geq 0$ and $m\geq0$, there is a vertex
  partition of the Turan graph $K\ang{n;k}$ into $m$ $(\leq
  p)$-cliques and $\Ceil{\frac{\max\{n-mp,0\}}{q}}$ $(\leq
  q)$-cliques.
\end{corollary}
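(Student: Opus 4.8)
The plan is to reduce everything to \corref{Turan}, which already evaluates the $(\leq d)$-clique partition number of a Turán graph. Write $n':=\max\{n-mp,0\}$. Two degenerate situations are immediate. If $p=0$ then the $m$ required $(\leq p)$-cliques are empty, $n'=n$, and the assertion is precisely \corref{Turan} applied with $d:=q$ (so here, and throughout the nontrivial part, $q\geq1$). If $mp\geq n$ then necessarily $p\geq1$ (as $n\geq1$), and \corref{Turan} with $d:=p\leq k$ gives a partition of $K\ang{n;k}$ into $\ceil{n/p}\leq m$ cliques of size $\leq p$; pad with empty cliques to get exactly $m$ of them, and since $n'=0$ no $(\leq q)$-cliques are needed. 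So from now on assume $p\geq1$, $q\geq1$ and $mp<n$, hence $n'=n-mp\geq1$.

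The heart of the argument is the following claim: \emph{the vertex set of $K\ang{n;k}$ can be partitioned into $m$ cliques $C_1,\dots,C_m$ of size exactly $p$ together with a set $R$ of the remaining $n'$ vertices such that the subgraph induced on $R$ is isomorphic to $K\ang{n';k}$.} I would prove this using the round-robin realization of $K\ang{n;k}$: place the vertices $0,1,\dots,n-1$ so that vertex $j$ lies in colour class $j\bmod k$. Since $p\leq k$, any block of $p$ consecutive vertices meets $p$ distinct classes and is therefore a $p$-clique; take $C_i:=\{(i-1)p,\dots,ip-1\}$ for $i=1,\dots,m$ (all contained in $\{0,\dots,n-1\}$ because $mp<n$) and $R:=\{mp,\dots,n-1\}$. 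The $n'$ vertices of $R$ are again distributed round-robin among the $k$ classes, starting from class $mp\bmod k$, so the class sizes within $R$ differ by at most one, i.e.\ $R$ induces $K\ang{n';k}$. (Equivalently one can argue greedily, as in the proof of \lemref{CMG}: repeatedly delete a $p$-clique taking one vertex from each of the $p$ largest colour classes; a short case check shows this keeps all class sizes within one of each other, and that the top $p$ classes stay nonempty through the first $m$ deletions because the number of remaining vertices never drops below $p+1$ before the $m$-th deletion, using $n-mp\geq1$ and $k\geq p$.)

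Granting the claim, I would finish by applying \corref{Turan} to the induced $K\ang{n';k}$ with $d:=q$, which is legitimate since $k\geq q\geq1$: it yields a partition of $R$ into $\ceil{n'/q}$ cliques of size $\leq q$. Together with $C_1,\dots,C_m$ this is the desired partition of $K\ang{n;k}$ into $m$ $(\leq p)$-cliques and $\ceil{n'/q}=\ceil{\max\{n-mp,0\}/q}$ $(\leq q)$-cliques.

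The main obstacle is the claim itself — carving out the $m$ cliques of size $p$ so that what remains is genuinely \emph{balanced}, i.e.\ a Turán graph $K\ang{n';k}$, since that balance is exactly the hypothesis \corref{Turan} requires. With the round-robin realization this reduces to the elementary fact that a contiguous tail of a round-robin sequence is itself a shift of a round-robin sequence; with the greedy realization it reduces to a small case analysis of how decrementing the top $p$ entries of a near-balanced tuple of class sizes affects its spread, plus the bookkeeping that the top $p$ classes remain nonempty throughout. Everything else is routine and invokes only \corref{Turan}.
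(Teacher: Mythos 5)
Your proof is correct and is essentially the paper's argument: the paper inducts on $m$, at each step removing a $p$-clique with one vertex from each of the $p$ largest colour classes and observing that $K\ang{n;k}-C\cong K\ang{n-p;k}$, then invokes \corref{Turan} in the base case, which is exactly your ``carve out $m$ balanced $p$-cliques, apply \corref{Turan} to the Tur\'an remainder'' claim. Your round-robin labelling is just an explicit, non-inductive packaging of that same peeling step (and you note the greedy/inductive version yourself), so there is nothing substantively different to compare.
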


\begin{proof}
  We proceed by induction on $m$. If $m=0$ then by \corref{Turan},
  $K\ang{n;k}$ has a partition into $\ceil{\frac{n}{q}}$ $(\leq
  q)$-cliques, as desired. Now assume that $m\geq1$. 
If $n\leq p$ then $K\ang{n;k}\cong K_n$ has a partition into one
$(\leq p)$-clique and zero $(\leq q)$-cliques, as desired. 
  Now assume that $n\geq p+1$. Let $C$ be a $p$-clique with exactly one
  vertex from each of the $p$ largest colour classes of
  $K\ang{n;k}$. This is well-defined since $p\leq k$. 
Then $K\ang{n;k}-C\cong K\ang{n-p;k}$. By induction,
  there is a vertex partition of $K\ang{n-p;k}$ into $m-1$ $(\leq
  p)$-cliques and $\Ceil{\frac{n-p-(m-1)p}{q}}$ $(\leq q)$-cliques.
  Since $\Ceil{\frac{n-p-(m-1)p}{q}}=\Ceil{\frac{n-mp}{q}}$, with $C$,
  we have a vertex partition of $K\ang{n;k}$ into $m$ $(\leq p)$-cliques
  and $\Ceil{\frac{n-mp}{q}}$ $(\leq q)$-cliques.
\end{proof}

\section{Integer Linear Programs}

This appendix contains a solution to the integer linear program that arose in the proof of the lower bound in \thmref{RootedPathwidth}.

\begin{lemma}
\lemlabel{IntegerProgram}
Fix integers $A,B\geq1$ and $d\geq2$ such that $A\leq B$ and $(d-2)B\leq(d-1)A$. Suppose that some non-negative integers $x,y_1,y_2,z$ satisfy 
\begin{align}
(d-2)x+(d-1)(y_1+y_2)+dz	& \geq A	\eqnlabel{Middle}\\
(d-1)(x+y_1)				& \geq B	\eqnlabel{Left}\\
(d-1)(x+y_2)				& \geq B.	\eqnlabel{Right}
\end{align}
Then $$x+y_1+y_2+z\;\geq\;
\CEIL{\frac{A}{d}+\frac{2}{d}\CEIL{\frac{B}{d-1}}}\enspace.$$
This bound is achievable, for example by 
$$x:=\CEIL{\frac{B}{d-1}},\;\;y_1:=y_2:=0,\;\; z:=\CEIL{\frac{A-x(d-2)}{d}}\enspace.$$
\end{lemma}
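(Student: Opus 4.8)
The plan is to prove the stated lower bound by taking an appropriate positive combination of \eqref{eqn:Middle}, \eqref{eqn:Left}, \eqref{eqn:Right}, and then separately to check that the exhibited point $(x,y_1,y_2,z)$ is feasible and attains the bound.

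For the lower bound, the first step is to sharpen \eqref{eqn:Left} and \eqref{eqn:Right}: since $x,y_1,y_2$ are non-negative integers, these give $x+y_1\geq\beta$ and $x+y_2\geq\beta$, where $\beta:=\CEIL{\frac{B}{d-1}}$. The heart of the argument is then the algebraic identity
$$d\,(x+y_1+y_2+z)=\big((d-2)x+(d-1)(y_1+y_2)+dz\big)+(x+y_1)+(x+y_2),$$
which is checked by comparing the coefficients of $x,y_1,y_2,z$ on the two sides (each side has coefficient $d$ on every variable). Applying \eqref{eqn:Middle} to the first bracket and the sharpened inequalities to the other two yields $d\,(x+y_1+y_2+z)\geq A+2\beta$; dividing by $d$ and using that $x+y_1+y_2+z$ is an integer gives $x+y_1+y_2+z\geq\CEIL{\frac{A+2\beta}{d}}=\CEIL{\frac{A}{d}+\frac{2}{d}\CEIL{\frac{B}{d-1}}}$, as desired. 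This direction uses neither $A\leq B$ nor $(d-2)B\leq(d-1)A$.

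For achievability, I would take $x:=\beta$, $y_1:=y_2:=0$, and $z:=\CEIL{\frac{A-(d-2)\beta}{d}}$. Feasibility of \eqref{eqn:Left} and \eqref{eqn:Right} is immediate from $(d-1)\beta\geq B$, and \eqref{eqn:Middle} holds because $(d-2)\beta+dz\geq(d-2)\beta+(A-(d-2)\beta)=A$. The one step requiring care — and the only place the hypothesis $(d-2)B\leq(d-1)A$ enters — is verifying $z\geq0$: from $\beta<\frac{B}{d-1}+1$ together with $(d-2)B\leq(d-1)A$ one gets $(d-2)\beta<\frac{(d-2)B}{d-1}+(d-2)\leq A+d$, so $A-(d-2)\beta>-d$ and hence $z\geq0$. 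Finally, since $\beta$ is an integer it can be pulled in and out of the ceiling, so
$$x+y_1+y_2+z=\beta+\CEIL{\frac{A-(d-2)\beta}{d}}=\CEIL{\frac{A}{d}+\frac{2\beta}{d}},$$
matching the lower bound. The only genuine obstacle is spotting the weighting that produces the displayed identity; once that is in hand, both directions are routine bookkeeping.
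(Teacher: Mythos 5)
Your proof is correct, and the lower-bound half takes a genuinely different route from the paper. The paper first runs an exchange argument (replacing $(x,y_1,y_2,z)$ with $(x+y_2,0,0,z+y_1)$ and checking it remains feasible) to show there is an optimal solution with $y_1=y_2=0$, then parametrises $x=\CEIL{\frac{B}{d-1}}+i$ and minimises $\CEIL{\frac{A+2x_i}{d}}$ over $i\geq0$. You instead exhibit a direct certificate: the weighted sum $d(x+y_1+y_2+z)=\bigl((d-2)x+(d-1)(y_1+y_2)+dz\bigr)+(x+y_1)+(x+y_2)$, combined with the integrality sharpening $x+y_j\geq\CEIL{\frac{B}{d-1}}$ of \eqnref{Left} and \eqnref{Right} and a final rounding of the objective. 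This is essentially an integral LP-duality argument; it is shorter, avoids the structural reduction entirely, and makes transparent that neither $A\leq B$ nor $(d-2)B\leq(d-1)A$ is needed for the lower bound. What the paper's approach buys in exchange is an explicit description of where the optimum is attained (which is why the same scheme generalises to \lemref{AnotherIntegerProgram}, where a single linear combination would not suffice because the optimal $y^*$ depends on $A\bmod d(d-1)$). Your achievability verification coincides with the paper's: both check feasibility of the same point and both invoke $(d-2)B\leq(d-1)A$ only to conclude $z\geq0$.
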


\begin{proof}
Say $(x,y_1,y_2,z)$ is a \emph{solution} if \threeeqnref{Middle}{Left}{Right} are satisfied. A solution is \emph{optimal} if it minimises $x+y_1+y_2+z$.
Suppose that $(x,y_1,y_2,z)$ is a solution, where $y_1\geq y_2$.
We claim that $(x+y_2,0,0,z+y_1)$ is also a solution.
By \eqnref{Middle}  and since $y_1\geq y_2$,
\begin{align*}
		& (d-2)(x+y_2)\,+(d-1)(0+0)\,+\,d(z+y_1)\\	
=\;		& (d-2)x\,+\,(d-2)y_2\,+\,dy_1\,+\,dz\\	
\geq\; 	& (d-2)x+(d-1)(y_1+y_2)+dz\\
\geq\;	& A\enspace.
\end{align*}
Thus $(x+y_2,0,0,z+y_1)$ satisfies \eqnref{Middle}.
By \eqnref{Right} and since $y_1\geq y_2$,
$$(d-1)(x+y_1)	\geq	(d-1)(x+y_2)	 \geq B\enspace.$$	
Thus $(x+y_2,0,0,z+y_1)$ satisfies \eqnref{Left} and \eqnref{Right}.
Hence $(x+y_2,0,0,z+y_1)$ is also a solution, as claimed.
Since $$x+y_1+y_2+z= (x+y_2)+0+0+(z+y_1),$$ 
there is an optimal solution $(x,0,0,z)$.
By \eqnref{Left} or \eqnref{Right}, 
$$x=x_i:=\CEIL{\frac{B}{d-1}}+i$$
for some integer $i\geq0$. By \eqnref{Middle},
$$z\geq z_i:=\CEIL{\frac{A-(d-2)x_i}{d}}\enspace.$$
Now,
\begin{align*}
x+z\;\geq\; x_i+z_i \;=\; 
\CEIL{\frac{A+2x_i}{d}} \;\geq\;
\CEIL{\frac{A+2x_0}{d}} \;=\; 
x_0+z_0\enspace.
\end{align*}
Thus if $(x_0,0,0,z_0)$ is a solution, then it is optimal. 
Thus it suffices to prove that $(x_0,0,0,z_0)$ is a solution.
Clearly $x_0\geq 0$ and \threeeqnref{Middle}{Left}{Right} are satisfied.
It remains to prove that $z_0\geq 0$. 
We have 
\begin{align*}
(d-2)x_0
=(d-2)\CEIL{\frac{B}{d-1}}
\leq\frac{(d-2)B}{d-1}+(d-2)
\leq A+d-2\enspace.
\end{align*}
Thus $A-(d-2)x_0\geq2-d$ and
\begin{align*}
z_0=\CEIL{\frac{A-(d-2)x_0}{d}}
\geq \CEIL{\frac{2-d}{d}}
=\CEIL{\frac{2}{d}}-1=0\enspace,
\end{align*}
as desired. Hence 
$(x_0,0,0,z_0)$ is an optimal solution.
The claimed lower bound on $x+y_1+y_2+z$ follows by substitution.
\end{proof}

Now we solve another integer program that is needed in the proof of the lower bound in \thmref{RootedPathwidth} with $\Delta=d+1$. 

\begin{lemma}
\lemlabel{AnotherIntegerProgram}
Fix integers $A\geq1$ and $d\geq2$. Let $r:=A\bmod{d(d-1)}$. Thus $0\leq r\leq d(d-1)-1$. Suppose that $x,y_1,y_2,z\in\Z$ satisfy 
\begin{align}
(d-2)x+(d-1)(y_1+y_2+z)		& \geq (d-1)A	\eqnlabel{AnotherMiddle}\\
(d-1)(x+y_1)				& \geq dA		\eqnlabel{AnotherLeft}\\
(d-1)(x+y_2)				& \geq dA.		\eqnlabel{AnotherRight}
\end{align}
Then $$x+y_1+y_2+z\;\geq\;
\begin{cases}
\CEIL{\frac{d-1}{d-2}A-\frac{2}{d-2}\FLOORFRAC{A}{d(d-1)}}	
& \text{if }r<(d-1)(d-2)\\
\CEIL{\frac{d}{d-1}A+\CEILFRAC{A}{d(d-1)}}
& \text{if }r\geq(d-1)(d-2)\enspace.\\
\end{cases}$$
This bound is achievable, for example by 
\begin{align*}
x&:=
\begin{cases}
\CEIL{\frac{d-1}{d-2}A-\frac{2d-2}{d-2}\FLOORFRAC{A}{d(d-1)}}
& \text{if }r<(d-1)(d-2)\\
\CEIL{\frac{d}{d-1}A-\CEILFRAC{A}{d(d-1)}}
& \text{if }r\geq(d-1)(d-2)\enspace,\\
\end{cases}\\
y_1:=y_2&:=\begin{cases}
\FLOORFRAC{A}{d(d-1)}	& \text{if }r<(d-1)(d-2)\\
\CEILFRAC{A}{d(d-1)}	& \text{if }r\geq(d-1)(d-2)\enspace,\\
\end{cases}\\
z&:=0\enspace.
\end{align*}
\end{lemma}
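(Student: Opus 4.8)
The plan is to follow the template of \lemref{IntegerProgram}: make two objective-preserving reductions to simplify the feasible region, read off a sharp lower bound, and then verify that the proposed tuple attains it. As in \lemref{IntegerProgram}, and as the intended application forces, I read the hypothesis as: $x,y_1,y_2,z$ are \emph{non-negative} integers (the bound is false without this).

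First the reductions. Since $z$ occurs only in \eqnref{AnotherMiddle}, and there with the same coefficient $d-1$ as $y_1$, replacing the pair $(y_1,z)$ by $(y_1+z,0)$ leaves the objective $x+y_1+y_2+z$ unchanged and preserves all three inequalities; so I may assume $z=0$. Next, swapping $y_1$ and $y_2$ interchanges \eqnref{AnotherLeft} with \eqnref{AnotherRight} and fixes the objective, so I may assume $y_1\ge y_2$; then \eqnref{AnotherLeft} follows from \eqnref{AnotherRight} and can be discarded.

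Now the lower bound. Write $s:=x+y_1+y_2$ (the objective, since $z=0$) and $B:=\CEILFRAC{dA}{d-1}$. Putting $y_1+y_2=s-x$ into \eqnref{AnotherMiddle} gives $(d-1)s-x\ge(d-1)A$, i.e.\ $x\le(d-1)(s-A)$. On the other hand $B-x\le y_2$ (from \eqnref{AnotherRight} if $x\le B$, and trivially if $x>B$) while $y_2\le\tfrac{s-x}{2}$ (from $y_1\ge y_2$), so $B-x\le\tfrac{s-x}{2}$, i.e.\ $x\ge 2B-s$. Hence $2B-s\le(d-1)(s-A)$, which rearranges to $ds\ge 2B+(d-1)A$, so
$$x+y_1+y_2+z\;=\;s\;\ge\;\CEIL{\frac{2\CEILFRAC{dA}{d-1}+(d-1)A}{d}}\enspace.$$
Writing $A=d(d-1)q+r$ with $0\le r<d(d-1)$ and separating the cases $r<(d-1)(d-2)$ and $r\ge(d-1)(d-2)$, one checks that this last quantity equals the piecewise expression in the statement; this is a routine (if slightly fiddly) manipulation of nested floors and ceilings, using identities such as $d(d-1)^2=(d-2)(d^2+1)+2$ and $(d-1)^2=d(d-2)+1$.

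Finally, for achievability I substitute the proposed values of $x,y_1,y_2,z$ into \threeeqnref{AnotherMiddle}{AnotherLeft}{AnotherRight}: in each of the two cases, $x\ge 0$ is immediate, each inequality reduces to an elementary fact such as $(d-2)\CEILFRAC{(d-1)r}{d-2}\ge(d-1)r$, and $x+y_1+y_2+z$ simplifies to the claimed bound. The structural core (the two reductions and the one-line sandwich $2B-s\le x\le(d-1)(s-A)$) is short; the main work, and the only delicate point, is the two-case bookkeeping that reconciles the clean bound $\CEIL{(2\CEILFRAC{dA}{d-1}+(d-1)A)/d}$ with both the piecewise formula and the objective value of the explicit optimum.
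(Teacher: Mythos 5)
Your proof is correct, but the heart of your lower-bound argument is genuinely different from the paper's. Both arguments absorb $z$ into $y_1$ and both finish by checking the explicit tuple; the divergence is in the middle. The paper pushes the exchange argument one step further, using the substitution $(x,y_1,y_2,0)\mapsto(x-1,y_1,y_2+1,0)$ to show that an optimal solution has $y_1=y_2$, and is then left with a two-variable program $\min x^*+2y^*$, which it solves by writing $x^*+2y^*\geq\ceil{f(y^*)}$ for an explicit unimodal piecewise-linear $f$ and comparing its values at $\FLOORFRAC{A}{d(d-1)}$ and $\CEILFRAC{A}{d(d-1)}$; the test $r\lessgtr(d-1)(d-2)$ is precisely the test for which of these two integers wins, and $d=2$ needs separate treatment because of the $d-2$ denominators. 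You instead aggregate: the sandwich $2\CEILFRAC{dA}{d-1}-s\leq x\leq(d-1)(s-A)$ (where rounding $x+y_2$ up to $\CEILFRAC{dA}{d-1}$ \emph{before} aggregating is exactly what keeps the bound tight) yields the single closed form $s\geq\CEIL{\tfrac{1}{d}\bigl(2\CEILFRAC{dA}{d-1}+(d-1)A\bigr)}$, valid uniformly including $d=2$. What your route buys is a shorter derivation and a one-line formula for the optimum; what it costs is the reconciliation of that formula with the piecewise expression in the statement, which you declare routine but do not carry out. I verified that it does hold: writing $A=d(d-1)q+r$ and $r=(d-1)a+b$ with $0\leq b\leq d-2$, both sides equal $q(d^2+1)$ plus a quantity in $a,b$ alone, and the residual comparison reduces to $\ceil{\tfrac{c+2}{d}}=\ceil{\tfrac{c}{d-2}}$ for $1\leq c\leq 2d-5$ in one case and $\ceil{\tfrac{a+b+2}{d}}=2$ in the other; the feasibility and objective-value check for the proposed tuple likewise goes through. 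So there is no gap, but in a final version these computations should be written out. Finally, your insistence on non-negativity is a genuine correction to the statement as printed: with $d=3$ and $A=6$, the tuple $(x,y_1,y_2,z)=(-2,11,11,-15)$ satisfies \threeeqnref{AnotherMiddle}{AnotherLeft}{AnotherRight} with objective $5<10$, and the paper's own first exchange $(x,y_1,y_2,z)\mapsto(x,y_1+z,y_2,0)$ also silently uses $z\geq0$ to preserve \eqnref{AnotherLeft}.
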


\begin{proof}
Say $(x,y_1,y_2,z)$ is a \emph{solution} if \threeeqnref{AnotherMiddle}{AnotherLeft}{AnotherRight} are satisfied. A solution is \emph{optimal} if it minimises $x+y_1+y_2+z$. Observe that if $(x,y_1,y_2,z)$ is an optimal solution, then $(x,y_1+z,y_2,0)$ also is an optimal solution. Thus $(x,y_1,y_2,0)$ is an optimal solution for some $y_1\geq y_2$. 

Let $(x,y_1,y_2,0)$ be an optimal solution with $y_1\geq y_2$, such that $y_1-y_2$ is minimised. Suppose on the contrary that $y_1-y_2\geq 1$. Then $(x-1,y_1,y_2+1,0)$ is a solution since 
\begin{align*}
(d-2)(x-1)+(d-1)(y_1+y_2+1)
&=(d-2)x+(d-1)(y_1+y_2)-(d-2)+(d-1)\\
&=(d-2)x+(d-1)(y_1+y_2)+1\\
&\geq (d-1)A+1,
\end{align*}
and by \eqnref{AnotherRight},
\begin{align*}
(d-1)(x-1+y_1)&\geq(d-1)(x+y_2)\geq dA,\text{ and}\\
(d-1)(x-1+y_2+1)&=(d-1)(x+y_2)\geq dA.
\end{align*}
Moreover, $(x-1,y_1,y_2+1,0)$ is optimal since
$x+y_1+y_2=(x-1)+y_1+(y_2+1)$. This proves that $(x,y_1,y_2,0)$ does not minimise $y_1-y_2$, which is a contradiction. Thus $y_1=y_2$. 

Hence $(x^*,y^*,y^*,0)$ is an optimal solution for some $x^*,y^*\in\Z$. That is, $x^*$ and $y^*$ minimise $x^*+2y^*$ such that 
\begin{align}
(d-2)x^*+2(d-1)y^*		& \geq (d-1)A		\eqnlabel{MiddleMiddle}\\
(d-1)(x^*+y^*)			& \geq dA\enspace.	\eqnlabel{RightRight}
\end{align}

First consider the case when $d=2$. Then \eqnref{MiddleMiddle} and \eqnref{RightRight} hold if and only if $y^*\geq\CeilFrac{A}{2}$ and $x^*\geq2A-y^*$. Thus $x^*+2y^*$ is minimised by 
$y^*=\CeilFrac{A}{2}$ and $x^*=2A-y^*=\FloorFrac{3A}{2}$. Hence
$x^*+2y^*=\FloorFrac{3A}{2}+2\CeilFrac{A}{2}=\CeilFrac{5A}{2}$. This result matches the claimed bounds since $r\geq(d-1)(d-2)$ when $d=2$.

Now assume that $d\geq3$. Thus \eqnref{MiddleMiddle} and \eqnref{RightRight} hold if and only if 
\begin{align*}
x^*\geq
\CEIL{\MAXM{\frac{d-1}{d-2}A-\frac{2d-2}{d-2}y^*,\frac{d}{d-1}A-y^*}}\enspace.
\end{align*}
That is, 
\begin{equation*}
x^*+2y^*\geq
\CEIL{\MAXM{\frac{d-1}{d-2}A-\frac{2}{d-2}y^*,\frac{d}{d-1}A+y^*}}\enspace.
\end{equation*}
Define 
\begin{align*}
f(y)
:=\MAXM{\frac{d-1}{d-2}A-\frac{2}{d-2}y,\frac{d}{d-1}A+y}
=\begin{cases}
\frac{d-1}{d-2}A-\frac{2}{d-2}y
	& \text{ if }y \leq \frac{A}{d(d-1)}\\
\frac{d}{d-1}A+y
	& \text{ if }y\geq\frac{A}{d(d-1)}\enspace.
\end{cases}
\end{align*}
For a given value of $y^*$, setting $$x^*:=\CEIL{\MAXM{\frac{d-1}{d-2}A-\frac{2d-2}{d-2}y^*,\frac{d}{d-1}A-y^*}}$$
implies that that $x^*+2y^*=\CEIL{f(y^*)}$. Since $x^*$ and $y^*$ minimise $x^*+2y^*$,
\begin{align*}
x^*+2y^*
=\min_{y\in\Z}
\CEIL{f(y)}
=\min_{y\in\Z}
\begin{cases}
\CEIL{\frac{d-1}{d-2}A-\frac{2}{d-2}y}
	& \text{ if }y \leq \FLOORFRAC{A}{d(d-1)}\\
\CEIL{\frac{d}{d-1}A+y}
	& \text{ if }y\geq \CEILFRAC{A}{d(d-1)}\enspace.
\end{cases}
\end{align*}
Observe that $\frac{A}{d(d-1)}$ is the only local minimum (and thus the global minimum) of $f$. Hence
$$y^*=\FLOORFRAC{A}{d(d-1)}\text{ or }y^*=\CEILFRAC{A}{d(d-1)}\enspace.$$
If $r=0$ then $y^*=\frac{A}{d(d-1)}$ is an integer, and we are done with $x^*=\frac{d+1}{d}A$ and $x^*+2y^*=\frac{d^2+1}{d^2-d}A$. 
Now assume that $r\geq1$. Thus
\begin{align*}
&&f\BRACKET{\FLOORFRAC{A}{d(d-1)}}&< f\BRACKET{\CEILFRAC{A}{d(d-1)}}\\
&\Longleftrightarrow&
\frac{d-1}{d-2}A-\frac{2}{d-2}\FLOORFRAC{A}{d(d-1)}
&< \frac{d}{d-1}A+\CEILFRAC{A}{d(d-1)}\\
&\Longleftrightarrow&
\frac{d-1}{d-2}A-\frac{2}{d-2}\cdot\frac{A-r}{d(d-1)}
&< \frac{d}{d-1}A+\frac{A-r}{d(d-1)}+1\\
&\Longleftrightarrow&
r&< (d-1)(d-2)\enspace.
\end{align*}
Thus 
$$x^*+2y^*=\CEIL{f(y^*)}=
\begin{cases}
\CEIL{\frac{d-1}{d-2}A-\frac{2}{d-2}\FLOORFRAC{A}{d(d-1)}}
	& \text{if }r<(d-1)(d-2)\\
\CEIL{\frac{d}{d-1}A+\CEILFRAC{A}{d(d-1)}}
	& \text{if }r\geq (d-1)(d-2)\enspace,
\end{cases}$$
where
$$y^*=
\begin{cases}
\FLOORFRAC{A}{d(d-1)}	& \text{if }r< (d-1)(d-2)\\
\CEILFRAC{A}{d(d-1)}	& \text{if }r\geq (d-1)(d-2)\enspace
\end{cases}$$
and 
$$x^*=
\begin{cases}
\CEIL{\frac{d-1}{d-2}A-\frac{2d-2}{d-2}\FLOORFRAC{A}{d(d-1)}}	
& \text{if }r<(d-1)(d-2)\\
\CEIL{\frac{d}{d-1}A-\CEILFRAC{A}{d(d-1)}}
& \text{if }r\geq(d-1)(d-2)\enspace.
\end{cases}$$
\end{proof}

\end{document}